\newtheorem{thm}{Theorem}[section]
\newtheorem{prop}[thm]{Proposition}
\newtheorem{lem}[thm]{Lemma}
\newtheorem{cor}[thm]{Corollary}
\newtheorem{defn}[thm]{Definition}
\newtheorem{ex}[thm]{Example}
\newtheorem{rem}[thm]{Remark}
\newcommand{\skipit}[1]{{}}
\newcommand{\prfend}{\hbox to7pt{\hfil}
\par\vskip-\baselineskip\hbox to\hsize
{\hfil\vbox {\hrule width6pt height6pt}}\vskip\baselineskip}
\newcommand{\ZZ}{\mathbb{Z}}
\newcommand{\RR}{\mathbb{R}}
\newcommand{\TC}{\mathbb{T}}
\newcommand {\PP}{\mathbb{P}}
\newcommand{\cO}{\mathcal{O}}
\DeclareMathOperator{\Image}{Im}
\DeclareMathOperator{\Proj}{Proj}
\newcommand{\myarrow}[2]{\hbox to #1pt{\hfil$\to$\hfil}{\hskip-#1pt{\raise
10pt\hbox to#1pt{\hfil$\scriptscriptstyle #2$\hfil}}}}
\begin{document}

\title{The minimal number of generators of a  Togliatti system}
\author[Emilia Mezzetti]{Emilia Mezzetti}
\address{Dipartimento di Matematica e  Geoscienze, Universit\`a di
Trieste, Via Valerio 12/1, 34127 Trieste, Italy}
\email{mezzette@units.it}

\author[Rosa M. Mir\'o-Roig]{Rosa M. Mir\'o-Roig}
\address{Facultat de Matem\`atiques, Department d'Algebra i
Geometria, Gran Via des les Corts Catalanes 585, 08007 Barcelona,
Spain}
\email{miro@ub.edu}

\begin{abstract} We compute the minimal and the maximal bound on the number of generators of a minimal smooth monomial Togliatti system of forms of degree $d$ in $n+1$ variables, for any $d\ge 2$ and $n\geq 2$.  We classify the Togliatti  systems with number of generators reaching the lower bound or close to the lower bound.  We then prove that if $n=2$ (resp $n=2,3$) all range between the lower and upper bound is covered, while if $n\geq 3$ (resp. $n\ge 4$) there are gaps if we only consider smooth minimal Togliatti systems (resp. if we avoid the smoothness hypothesis).  We finally analyze for $n=2$ the Mumford-Takemoto stability  of the syzygy bundle associated to smooth monomial Togliatti systems.
\end{abstract}

\thanks{Acknowledgments:  The first author is member of GNSAGA and is
supported by FRA, Fondi di Ricerca di Ateneo, and PRIN \lq\lq Geometria
delle variet\`a algebriche''.  The second  author was partially   supported
by  MTM2013-45075-P.
\\ {\it Key words and phrases.} Osculating space, weak Lefschetz
property, Laplace equations,
toric varieties.
\\ {\it 2010 Mathematic Subject Classification.} 13E10,  14M25, 14N05,
14N15, 53A20.}

\maketitle

\tableofcontents

\markboth{E. Mezzetti, R. M. Mir\'o-Roig}{The minimal number of
generators of a  Togliatti system}


\large

\section{Introduction}
The classification of the smooth projective varieties satisfying  at least one Laplace equation
is a classical problem, still very far  from being solved. We recall that a projective variety $X\subset \PP^N$ is said to satisfy a Laplace equation of order $d$, for an integer $d\geq 2$, if its $d$-osculating space at a general point has dimension strictly less than  expected.  The most famous example  is the Togliatti surface, a rational surface in $\PP^5$ parametrized by cubics, obtained from the $3$rd Veronese embedding $V(2,3)$ of $\PP^2$ by a suitable projection from four  points:  the Del Pezzo surface obtained  projecting $V(2,3)$ from three general points on it admits  a  point which belongs to all its osculating spaces, so  projecting further from this special point one obtains a surface having all osculating spaces of dimension $\leq 4$ instead of the expected  $5$. This surface is named from Eugenio Togliatti who gave a classification of rational surfaces parametrized by cubics and satisfying at least one Laplace equation of order $2$. For more details see the original articles of Togliatti \cite{T1}, \cite{T2}, or   \cite{LM},  \cite{V}, \cite{I} for discussions of this example.  In \cite{MMO} the two authors of this note and Ottaviani described a connection, due to apolarity, between projective varieties satisfying at least one Laplace equation and homogeneous artinian ideals in a polynomial ring, generated by polynomials of the same degree and failing the weak Lefschetz property (WLP for short). Let us recall that a homogeneous ideal $I\subset R:=K[x_0, \ldots, x_n]$ fails the weak Lefschetz property in some degree $j$ if, for any linear form $L$, the map of multiplication by $L$ from $(R/I)_j$ to $(R/I)_{j+1}$ is not of maximal rank (see \cite{MM}). Thanks to this connection, explained in detail in Section 2, they obtained in the toric case the classification of the smooth rational threefolds parametrized by cubics and satisfying a Laplace equation of order 2, and gave a conjecture to extend it to varieties of any dimension. This conjecture has been recently proved in \cite{MRM}. Note that the assumption that the variety is toric translates in the fact that the related ideals are generated by monomials, which simplifies apolarity and allows to exploit combinatorial methods. This point of view had been introduced by Perkinson  in \cite{P}, and applied  to the classification of toric surfaces and threefolds satisfying Laplace equations under some rather strong additional assumptions on the osculating spaces.

In this note we begin the study of the analogous problems for smooth toric rational varieties parametrized by monomials of degree $d\geq4$, or equivalently for artinian  ideals of $R$ generated by monomials of degree $d$.
The picture becomes soon much more involved than in the case of cubics, and for the moment a complete classification appears out of reach. We consider mainly minimal smooth toric Togliatti systems  of forms of degree $d$ in $R$, i.e. homogeneous artinian ideals generated by monomials failing the WLP, minimal with respect to this property, and such that the apolar linear system parametrizes a smooth variety.

The first goal of this note is to establish minimal and maximal bounds, depending on $n$ and $d\ge 2$, for the number of generators of  Togliatti systems of this form,  and to classify the  systems reaching the minimal bound, or close to reach it. We then investigate if all values comprised between the minimal and the maximal bound can be obtained as number of generators of a minimal smooth Togliatti system. We prove that the answer is positive if $n=2$, but negative if $n\geq 3$.  If we avoid smoothness assumption, the answer becomes positive for $n=3$ but is still negative for $n\geq 4$, even we detect some intervals and sporadic values that are reached. Finally, as applications of our results, we study  the Mumford--Takemoto stability of the syzygy bundle associated to a minimal smooth Togliatti system with $n=2$.

\vskip 2mm
Next we outline the structure of this note.  In
Section~\ref{defs and  prelim results} we  fix the notation
and we collect the basic results on Laplace equations and the
Weak Lefschetz Property needed in the sequel.
Section~\ref{minimalnumbergenerators} contains the main results of this note. Precisely, after recalling the results for degree $2$ and $3$, in Theorem \ref{mainthm1} we prove that  the  minimal bound $\mu^s(n,d)$ on the number of generators of a minimal smooth Togliatti system of forms of degree $d$ in $n+1$ variables, for $d\geq 4$, is equal to $2n+1$, and classify  the systems reaching the bound. Then in Theorem \ref{mainthm2} we get the complete classification for systems with number of generators $\mu^s(n,d)+1$. We also compute the maximal bound $\rho^s(n,d)$ and give various examples.
In Section~\ref{number} we prove that for $n=2$ and any $d\geq 4$ all numbers in the range between $\mu^s(n,d)$ and $\rho^s(n,d)$ are reached  (Proposition \ref{interval}), while for $n\geq 3$ the value $2n+3$ is a gap (Proposition \ref{2n+3} ). We then prove that, avoiding smoothness, for $n=3$ the whole interval is covered.
Finally Section~\ref{associatedbundles} contains the results about stability of the syzygy bundle for minimal smooth monomial Togliatti systems in $3$ variables.

\vskip 2mm\noindent {\bf Notation.}
Throughout this work  $k$ will be an algebraically closed field of
characteristic zero
and $\PP^n=\Proj(k[x_0,x_1,\cdots ,x_n])$.
We denote by $V(n,d)$ the Veronese variety
image of the projective space $\PP^n$ via the $d$-tuple
Veronese embedding.
$(F_1,\ldots,F_r)$ stands for the ideal generated by $F_1,\ldots,F_r$,
while $\langle F_1,\ldots,F_r \rangle$
denotes the $k$-vector space they generate.

\vskip 2mm \noindent {\bf Acknowledgement.} Part of this work was done
while the second author was a guest of the University
of Trieste and she thanks the University of Trieste for its hospitality.
The authors wish to thank the referee for some useful remarks.

\section{Background and preparatory results} \label{defs and prelim results}

In this section, we recall some standard
terminology and notation
from commutative algebra and algebraic geometry,
as well as some results needed later on. In particular, we briefly
recall  the relationship between   the
existence of homogeneous  artinian ideals $I\subset
k[x_0,x_1,\dots,x_n]$ which fail the weak Lefschetz property; and  the
existence of (smooth) projective varieties $X\subset \PP^N$ satisfying
at least one Laplace equation of order $s\ge 2$.
For more details,  see \cite{MMO} and \cite{MRM}.

\vskip 2mm
\noindent
{\bf A. The Weak Lefschetz Property.}
Let $R : = k[x_0,x_1,\dots,x_n]=\oplus _tR_t$ be the graded polynomial
ring in $n+1$ variables over  the
 field $k$.

\begin{defn}\label{def of wlp}\rm Let $I\subset R$ be a
homogeneous artinian ideal. We  say that   $R/I$
 has the {\em weak Lefschetz property} (WLP, for short)
if there is a linear form $L \in (R/I)_1$ such that, for all
integers $j$, the multiplication map
\[
\times L: (R/I)_{j} \to (R/I)_{j+1}
\]
has maximal rank, i.e.\ it is injective or surjective.
We will often abuse notation and say that the ideal $I$ has the
WLP.   In this
case, the linear form $L$ is called a {\em Lefschetz element} of
$R/I$.  If for the general form $L \in (R/I)_1$
and for an integer number $j$
the map $\times L$ has not maximal rank we will say that
the ideal $I$ fails the WLP in degree $j$.
\end{defn}

The Lefschetz elements of $R/I$ form a Zariski
open, possibly empty, subset of $(R/I)_1$.
  Part of the great
interest in the WLP stems from the  ubiquity of its
presence (See, e.g., \cite{BK}, \cite{CN1}, \cite{HMMNWW}, \cite{HMNW},
\cite{LZ} - \cite{MR}) and the fact that its presence puts
severe constraints on the possible Hilbert functions, which can
appear in various disguises (see, e.g.,
\cite{St-faces}).
  Though many algebras
 are expected to have the
WLP, establishing this property is often rather difficult. For
example, it was shown by R. Stanley \cite{st} and J. Watanabe
\cite{w} that a monomial artinian complete intersection ideal
$I\subset R$ has the WLP. By semicontinuity, it follows that
a {\em general}   artinian complete intersection ideal $I\subset
R$ has the WLP but it is open whether {\em every} artinian complete
intersection of height $\ge 4$
over a field of characteristic zero has the WLP. It is worthwhile to
point out that
the weak Lefschetz
property of an artinian ideal $I$ strongly depends on the characteristic
of the ground field $k$ and,
 in positive characteristic, there are examples of artinian complete
intersection ideals $I\subset k[x_0,x_1,x_2]$ failing the WLP (see,
e.g., Remark 7.10 in \cite{MMN2}).

In \cite{MMO}, Mezzetti, Mir\'{o}-Roig, and Ottaviani showed  that  the
failure of the
WLP can be used to construct  (smooth) varieties satisfying at least
one   Laplace equation of order $s\ge 2$ (see also \cite{MRM} and \cite{A}).
Let us review the needed concepts from differential geometry in order to
state this result.

\vskip 2mm \noindent

{\bf B. Laplace Equations.}

Let $X\subset \PP^N$ be a projective variety of dimension $n$ and
let $x\in X$ be a smooth point. We choose a system of
affine coordinates and an analytic local parametrization $\phi $ around $x$ where
 $x=\phi(0,...,0)$ and the $N$
components of $\phi$ are formal power series.
The {\em $s$-th osculating  space}
$T_x^{(s)}X$ to $X$ at $x$ is the projectivised span of  all
partial derivatives of $\phi$ of order
$\leq s$.
The expected dimension of $T_x^{(s)}X$
is ${{n+s}\choose {s}}-1$, but
in general $\dim T_x^{(s)}X\leq {{n+s}\choose {s}}-1$;
if strict inequality
holds for all smooth points of $X$, and $\dim
T_x^{(s)}X={{n+s}\choose {s}}-1-\delta$ for a general point $x$,
then $X$ is said to satisfy $\delta$ Laplace equations of order $s$.

\begin{rem}\label{bound} \rm It is clear that if $N<{{n+s}\choose
{s}}-1$ then $X$
satisfies at least one Laplace equation
of order $s$, but this case is not interesting and
will not be considered in the following.
\end{rem}

\vskip 2mm

Let $I$ be an artinian
ideal
generated by $r$
homogeneous polynomials
  $F_1, \cdots ,F_r\in R$ of  degree $d$. Associated to $I_d$ there is a
morphism 
$$\varphi _{I_d}:\PP^n \longrightarrow \PP^{r-1}.$$
Note that $\varphi _{I_d}$ is everywhere regular because $I$ is an artinian ideal.
Its image $X_{n,I_d}:={\Image (\varphi _{I_d})}\subset \PP^{r-1}$
is the projection of the $n$-dimensional Veronese variety $V(n,d)$
from the linear system $\langle(I^{-1})_d \rangle\subset \mid \cO
_{\PP^n}(d)\mid=R_d$ where
 $I^{-1}$ is the ideal generated by the Macaulay inverse system of $I$
(See \cite{MMO}, \S 3 for details). Analogously, associated to
  $(I^{-1})_d$ there is a rational map
$$\varphi _{(I^{-1})_d}:\PP^n \dashrightarrow \PP^{{n+d
\choose d}-r-1}.$$ The closure of its image
$X_{n,(I^{-1})_d}:=\overline{\Image (
\varphi _{(I^{-1})_d})}\subset \PP^{{n+d\choose d}-r-1}$
is the projection of the $n$-dimensional Veronese variety
$V(n,d)$ from the linear system $\langle F_1,\cdots ,F_r \rangle \subset
\mid \cO _{\PP^n}(d)\mid=R_d$.
The varieties  $X_{n,I_d}$ and
$X_{n,(I^{-1})_d}$ are usually called  apolar.
In the following $X_{n,(I^{-1})_d}$ will simply be denoted by $X$.
\vskip 2mm

 We have:

\begin{thm}\label{teathm} Let $I\subset R$ be an artinian
ideal
generated
by $r$ homogeneous polynomials $F_1,...,F_{r}$ of degree $d$.
If
$r\le {n+d-1\choose n-1}$, then
  the following conditions are equivalent:
\begin{itemize}
\item[(1)] the ideal $I$ fails the WLP in degree $d-1$;
\item[(2)] the  homogeneous forms $F_1,...,F_{r}$ become
$k$-linearly dependent on a general hyperplane $H$ of $\PP^n$;
\item[(3)] the $n$-dimensional   variety
 $X=X_{n,(I^{-1})_d}$ satisfies at least one Laplace equation of order
$d-1$.
\end{itemize}
\end{thm}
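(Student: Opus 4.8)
The plan is to prove the chain of equivalences $(1)\Leftrightarrow(2)\Leftrightarrow(3)$, exploiting apolarity to pass between the algebraic statement about $I$ and the geometric statement about $X$. The hypothesis $r\le \binom{n+d-1}{n-1}=\dim R_{d-1}$ is crucial: it guarantees that injectivity (rather than surjectivity) is the relevant failure mode for the multiplication map in degree $d-1$, since $\dim(R/I)_{d-1}\ge \dim R_{d-1}-r$ need not be compared directly but the map $\times L\colon (R/I)_{d-1}\to(R/I)_d$ has source of dimension $\binom{n+d-1}{n}$ and the failure we care about is non-injectivity. I will isolate this numerical point first.

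For $(1)\Leftrightarrow(2)$: fix a general linear form $L$ and let $H=\{L=0\}$. The key observation is that the kernel of $\times L\colon (R/I)_{d-1}\to (R/I)_d$ is identified, via the exact sequence $0\to R_{d-1}/(I_{d-1})\xrightarrow{\times L}R_d/I_d$, with the degree-$d$ part of the colon ideal modulo $I$; more usefully, a form $G\in R_{d-1}$ gives a nonzero kernel element exactly when $LG\in I_d=\langle F_1,\dots,F_r\rangle$ but $G\notin I_{d-1}$. Since $I$ is artinian and generated in degree $d$, we have $I_{d-1}=0$, so the map fails to be injective iff there is a nonzero $G\in R_{d-1}$ with $LG\in\langle F_1,\dots,F_r\rangle$. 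Dually, restricting the $F_i$ to $H$: the $F_i|_H$ are linearly dependent in $R_d/(L)\cong k[H]_d$ iff some nontrivial combination $\sum a_iF_i$ lies in $(L)\cap R_d = L\cdot R_{d-1}$, i.e. $\sum a_iF_i = LG$ for some $G\in R_{d-1}$; and one checks $G\ne 0$ unless the combination is already zero (again using $I_{d-1}=0$ and that $L$ is a nonzerodivisor on $R$). This yields $(1)\Leftrightarrow(2)$, with the role of the hypothesis $r\le\binom{n+d-1}{n-1}$ being to rule out the degenerate situation where the $F_i$ span too much of $R_d$ and the relevant map is forced to be surjective; I would spell this out carefully as it is the subtle point.

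For $(2)\Leftrightarrow(3)$: this is where apolarity enters through the description of $X=X_{n,(I^{-1})_d}$ as the projection of the Veronese variety $V(n,d)$ from the linear system $\langle F_1,\dots,F_r\rangle\subset R_d$. The $(d-1)$-st osculating space of $V(n,d)$ at a point $[\ell^d]$ (for $\ell$ a linear form) is spanned by all monomials of the form $\ell^{d-k}\cdot m_k$ with $\deg m_k\le d-1$, which is all of $R_d$ except it has the expected dimension; the osculating space of the \emph{projected} variety $X$ at the image point is the projection of $T^{(d-1)}_{[\ell^d]}V(n,d)$ from the center $\langle F_i\rangle$. By a standard computation (cf. \cite{MMO}), the $(d-1)$-osculating space of $V(n,d)$ at $[\ell^d]$ corresponds to $\ell\cdot R_{d-1}$, a hyperplane in $R_d$, so $X$ satisfies a Laplace equation of order $d-1$ at the general point iff this hyperplane $\ell\cdot R_{d-1}$ meets the center $\langle F_1,\dots,F_r\rangle$ nontrivially — i.e. iff some nonzero combination $\sum a_iF_i\in \ell\cdot R_{d-1}$ for general $\ell$, which is precisely condition $(2)$ with $L=\ell$. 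I would cite \cite{MMO} for the precise dictionary between the osculating spaces and the linear systems rather than rederiving it.

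The main obstacle I anticipate is handling the numerical/genericity bookkeeping cleanly: making precise that "general $L$" on the WLP side matches "general point of $X$" on the Laplace side, and verifying that under $r\le\binom{n+d-1}{n-1}$ the failure of maximal rank in degree $d-1$ is genuinely a failure of injectivity (so that it is detected by a nonzero $G\in R_{d-1}$ with $LG\in I_d$), rather than a failure of surjectivity which would carry different geometric meaning. Everything else is a transcription through the apolarity dictionary already set up before the theorem.
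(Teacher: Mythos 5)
Your proposal is correct and follows essentially the same route as the paper, which proves this theorem simply by citing \cite[Theorem 3.2]{MMO}: your two steps --- identifying failure of injectivity of $\times L$ in degree $d-1$ with a nonzero $G\in R_{d-1}$ satisfying $LG\in\langle F_1,\dots,F_r\rangle$ (using $I_{d-1}=0$ and the fact that $r\le{n+d-1\choose n-1}$ makes injectivity the relevant notion of maximal rank), and reading condition (2) as the center of projection meeting the $(d-1)$-osculating space $\ell\cdot R_{d-1}$ of the Veronese at a general point --- are exactly the apolarity argument of that reference. One small correction: $\ell\cdot R_{d-1}$ is not a hyperplane of $R_d$ (its codimension is ${n+d-1\choose n-1}$), but this does not affect your argument, since all you actually use is that the projected osculating space drops below the expected dimension precisely when $\langle F_1,\dots,F_r\rangle\cap\ell\cdot R_{d-1}\neq 0$ for general $\ell$.
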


\begin{proof} See \cite[Theorem 3.2]{MMO}.
\end{proof}

In view of Remark \ref{bound}, the assumption
$r\le {n+d-1\choose
n-1}$ ensures that the Laplace equations obtained in (3)
are not obvious. In the particular
case $n=2$, this assumption
gives  $r\leq d+1$.

The above result motivates the following definition:

\begin{defn} \label{togliattisystem} \rm Let $I\subset R$ be an artinian
ideal
generated
by $r$ forms $F_1,...,F_{r}$ of degree $d$, $r\le {n+d-1\choose n-1}$.
We introduce the following definitions:
\begin{itemize}
\item[(i)] $I$  is a \emph{Togliatti system}
if it satisfies the three equivalent conditions in  Theorem \ref{teathm}.

\item[(ii)]   $I$ is a \emph{monomial Togliatti system} if,
in addition, $I$ (and hence $I^{-1}$) can be generated by monomials.

\item[(iii)]   $I$ is a \emph{smooth Togliatti system} if,
in addition, the $n$-dimensional   variety
$X$ is smooth.

\item[(iv)] A  monomial  Togliatti system $I$ is said to be
\emph{minimal} if $I$ is generated by monomials $m_1, \cdots ,m_r$ and
there is no proper subset $m_{i_1}, \cdots ,m_{i_{r-1}}$ defining a
monomial  Togliatti system.
    \end{itemize}
\end{defn}

The names are in honor of Eugenio Togliatti who proved that for $n=2$ the only
smooth  Togliatti system of cubics is
$I=(x_0^3,x_1^3,x_2^3,x_0x_1x_2)\subset k[x_0,x_1,x_2]$ (see \cite{BK}, \cite{T1},
\cite{T2}).
 The main goal of our note is to determine a lower bound $\mu (n,d)$
(resp. $\mu ^s(n,d)$) for the minimal number of generators $\mu(I)$ of
any  (resp. smooth) minimal monomial Togliatti system  $I\subset
k[x_0,x_1,\cdots ,x_n]$ of forms of degree $d\ge 2$ and classify {\em
all} (resp.  smooth) minimal monomial Togliatti systems $I\subset
k[x_0,x_1,\cdots ,x_n]$ of forms of degree $d\ge 2$ which reach the
bound, i.e. $\mu(I)=\mu(n,d)$ (resp. $\mu(I)=\mu ^s(n,d)$). These
results will be achieved in the next section.



\section{The minimal number of generators of a smooth Togliatti system}
\label{minimalnumbergenerators}

From now on, we restrict our attention to  monomial artinian ideals
 $I\subset k[x_0,\ldots ,x_n]$ (i.e. the ideals invariants for the
natural toric action of $(k^*)^n$). Recall that  when $I\subset R$ is an
artinian monomial ideal, the  homogeneous part  $I^{-1}_d$ of degree $d$
of the inverse system  $I^{-1}$
is spanned by the monomials in $R_d$  not in $I$. It is also worthwhile
to recall that for monomial artinian ideals
to test the WLP there
is no need to consider a general linear form. In fact, we have

\begin{prop}
   \label{lem-L-element}
Let $I \subset R:=k[x_0,x_1,\cdots , x_n]$ be an artinian monomial ideal.
Then $R/I$ has the WLP if and only if  $x_0+x_1 + \cdots + x_n$
is a Lefschetz element for $R/I$.
\end{prop}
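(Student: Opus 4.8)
The plan is to prove this standard "monomial implies torus-action suffices" statement via a semicontinuity/group-action argument. Let me sketch the approach.

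First I would observe that, since $I$ is a monomial ideal, the torus $T=(k^*)^{n+1}$ acts on $R$ preserving $I$ (and preserving degrees), hence acts on each graded piece $(R/I)_j$. For $t=(t_0,\dots,t_n)\in T$ and a linear form $L=\sum a_i x_i$, the action carries the multiplication map $\times L\colon (R/I)_j\to(R/I)_{j+1}$ to the map $\times (t\cdot L)\colon (R/I)_j\to(R/I)_{j+1}$ in a way that fits into a commuting square with the isomorphisms $t\colon (R/I)_j\xrightarrow{\sim}(R/I)_j$ and $t\colon (R/I)_{j+1}\xrightarrow{\sim}(R/I)_{j+1}$. Consequently $\operatorname{rank}(\times L)=\operatorname{rank}(\times (t\cdot L))$ for all $j$. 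Since $t\cdot L=\sum a_i t_i^{-1}x_i$ (up to sign/convention of the action), the $T$-orbit of $L_0:=x_0+\cdots+x_n$ is exactly the set of linear forms with all coefficients nonzero, which is a dense open subset $U\subseteq(R/I)_1=R_1$.

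Next I would invoke the semicontinuity of rank: for each fixed $j$, the locus of $L\in R_1$ where $\times L\colon (R/I)_j\to(R/I)_{j+1}$ fails to have maximal rank is Zariski closed. Hence the set $W$ of $L\in R_1$ for which $\times L$ has maximal rank in every degree $j$ is a (possibly empty) Zariski open subset — the set of Lefschetz elements of $R/I$. Now suppose $R/I$ has the WLP, so $W\neq\emptyset$. Since $W$ is open and nonempty in the irreducible space $R_1$, it meets the dense open set $U$, so there exists $L\in W$ with all coefficients nonzero, i.e. $L\in W\cap(T\cdot L_0)$. Pick $t\in T$ with $t\cdot L_0=L$; then from the rank equalities above, $\times L_0$ also has maximal rank in every degree, so $L_0$ is a Lefschetz element. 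The converse is trivial. This proves the proposition.

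The one genuinely delicate point — the "main obstacle" — is pinning down the precise compatibility between the $T$-action and multiplication, i.e. checking that the diagram
\[
\begin{array}{ccc}
(R/I)_j & \xrightarrow{\ \times L\ } & (R/I)_{j+1}\\[2pt]
\downarrow{\scriptstyle t} & & \downarrow{\scriptstyle t}\\[2pt]
(R/I)_j & \xrightarrow{\ \times (t\cdot L)\ } & (R/I)_{j+1}
\end{array}
\]
commutes, where the vertical maps are the linear isomorphisms induced by $t$; this is just the multiplicativity of the action $t\cdot(fg)=(t\cdot f)(t\cdot g)$ together with well-definedness on the quotient (which uses $t\cdot I=I$), but it must be stated carefully to conclude the rank invariance. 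Everything else — semicontinuity of rank, irreducibility of $R_1$, density of $U$ — is routine and can be cited. I would also remark that the argument shows more generally that for a monomial artinian ideal the set of Lefschetz elements, if nonempty, is automatically $T$-invariant and dense, and in particular contains $L_0$; this is the content used repeatedly in the sequel when testing the WLP on monomial Togliatti systems.
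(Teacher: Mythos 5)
Your proof is correct, and it is essentially the argument behind the result: the paper gives no proof of its own but cites \cite{MMN2}, Proposition 2.2, where the key point is exactly the one you use, namely that a monomial ideal is stable under the torus action rescaling the variables, so any Lefschetz element (general, hence with all coefficients nonzero) can be moved to $x_0+\cdots+x_n$ without changing the ranks of the multiplication maps. Your packaging via openness of the Lefschetz locus and density of the orbit of $x_0+\cdots+x_n$ is the standard way to make this precise, and the commuting-square compatibility you single out is indeed the only point requiring care.
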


\begin{proof} See \cite{MMN2};  Proposition 2.2.
\end{proof}

Given an artinian ideal $I\subset k[x_0,x_1,\cdots ,x_n]$, we denote by
$\mu (I)$ the minimal number of generators of $I$. We define
$$\mu(n,d):=min \{ \mu(I) \mid I\in \mathcal{ T}(n,d) \}, $$
$$\mu^s(n,d):=min \{ \mu(I) \mid I\in \mathcal{ T}^s(n,d) \}, $$
$$\rho (n,d):=max \{ \mu(I) \mid I\in \mathcal{ T}(n,d) \} \text{ and }$$
$$\rho ^s(n,d):=max \{ \mu(I) \mid I\in \mathcal{ T}^s(n,d) \} $$
where $\mathcal{ T}(n,d)$ is the set of all minimal  monomial  Togliatti
systems  $I\subset k[x_0,x_1,\cdots ,x_n]$ of forms of degree $d$ and
$\mathcal{ T}^s(n,d)$ is the set of all minimal smooth monomial
Togliatti systems  $I\subset k[x_0,x_1,\cdots ,x_n]$ of forms of degree
$d$. By definition, we have $\mathcal{ T}^s(n,d) \subset \mathcal{ T}(n,d)$.

Our first goal is to provide  a lower bound for $\mu(n,d)$ and $\mu ^s(n,d)$.
 First, we observe that all artinian monomial ideals $I\subset
k[x_0,x_1,\cdots ,x_n]$
 generated by  forms of degree $d\ge 2$ contain $x_i^d$ for $i=0, \cdots
,n$ and the ideals $(x_0^d,\cdots ,x_n^d)$ do satisfy WLP. Therefore, we
always have
 \begin{equation} \label{ineq} n+2\le \mu(n,d) \le \mu^s(n,d)\le  \rho
^s(n,d)\le \rho (n,d)\le {n+d-1\choose n-1}.\end{equation}

Let us start analyzing the cases $d=2,3$.
\vskip 2mm
\begin{rem}\rm
The minimal smooth monomial  Togliatti systems  $I\subset
k[x_0,x_1,\cdots ,x_n]$ of quadrics were classified in \cite{MRM};
Proposition 2.8.  It holds:
\begin{itemize}
\item[(i)] $\mathcal{ T}^s(2,2)=\emptyset $.
\item[(ii)] For $n\ge 3$, we have $$\mu ^s(n,2)=\begin{cases}
\lambda^2+2\lambda +1 & \text{ if } n=2\lambda \\\lambda ^2+3\lambda +2
& \text{ if } n=2\lambda +1.
\end{cases}$$
\item[(iii)] For $n\ge 3$, $\rho ^s(n,2)={n\choose 2}+3$.
\end{itemize}
In particular, for $n=3$ we have $n+2<\mu ^s(n,2)=\rho
^s(n,2)={n+1\choose 2}$; for $n=4$ we have  $n+2<\mu ^s(n,2)=\rho
^s(n,2)<{n+1\choose 2}$; and for all $n>4$ the inequalities in
(\ref{ineq}) are strict, i.e.,
$$n+2 <\mu^s(n,2)< \rho^s(n,2)< {n+1\choose 2}.$$
 We also have  $\mu(n,2)=2n+1$ for $n\ge 4$ (since we easily check that
$\mu(n,2)\ge 2n+1$ and $I=(x_0^2,x_1^2,\cdots
,x_n^2,x_0x_1,x_0x_2,\cdots ,x_0x_n)$ fails weak Lefschetz property from
degree 1 to degree 2) and  $\mu(3,2)=6$ (since $\mu (3,2)>5$ and
$I=(x_0^2,x_1^2,x_2^2,x_3^2,x_0x_1,x_2x_3)$ fails weak Lefschetz
property in degree 1).
\end{rem}

\vskip 2mm
\begin{rem} \rm
The minimal smooth monomial  Togliatti systems  $I\subset
k[x_0,x_1,\cdots ,x_n]$ of cubics  were classified in \cite{MMO}; Theorem 4.11 and \cite{MRM};
Theorem 3.4. It holds:
\begin{itemize}
\item[(i)] $\rho ^s(2,3)=\mu ^s(2,3)=4$,
\item[(ii)] $\rho ^s (3,3)=\mu ^s(3,3)=8$,
\item[(iii)] $13= \mu^s(4,3)<15=\rho^s(4,3)$, and
\item[(iv)] For all $n\ge 4$, we have $\rho^s(n,3)={n+1\choose
3}+n+1$,$$\mu ^s(n,3)=min\{ \sum_{i=1}^s{a_i+2\choose 3}+\sum _{1\le
i<j<k\le s}a_ia_ja_k \mid n+1=\sum _{i=1}^s a_i \text{ and } n-1\ge
a_1\ge \cdots \ge a_s\ge 1 \}$$
    $$=\begin{cases} 2{\lambda +3\choose 3}  & \text{ if } n=2\lambda +1 \\
     {\lambda +2\choose 3}  +2{\lambda +3\choose 3}  & \text{ if }
n=2\lambda
    \end{cases}$$
    and, hence $$n+2 <\mu^s(n,3)< \rho^s(n,3)< {n+2\choose 3}.$$
     \end{itemize}

\vskip 2mm
 We may also check that $\mu(n,3)=2n+1$ for $n\ge 3$ (since $\mu(n,3)\ge
2n+1$ and $I=(x_0^3,x_1^3,\cdots ,x_n^3,x_0^2x_1,x_0^2x_2,\cdots
,x_0^2x_n)$ fails weak Lefschetz property in degree 2)
and  $\mu(2,3)=4$ (since $\mu (2,3)\ge 4$ and
$I=(x_0^3,x_1^3,x_2^3,x_0x_1x_2)$ fails weak Lefschetz property from
degree 2 to degree 3).
Notice that $\mu ^s(n,2)\ge 2n+1$ unless $n=2, 3$ and $\mu ^s(n,3)\ge
2n+1$ unless $n=2,3$.
\end{rem}

\vskip 2mm
From now on, we assume $d\ge 4$ and  $n\ge 2$. We will prove that
$\mu ^s (n,d)= \mu  (n,d)= 2n+1$. In addition,  we will  classify all (resp.
smooth)  minimal monomial Togliatti systems $I\subset k[x_0,x_1,\cdots
,x_n]$ of forms of degree $d\ge 4$ with $\mu(I)=2n+1$  and  all smooth
minimal monomial Togliatti systems $I\subset k[x_0,x_1,\cdots ,x_n]$ of
forms of degree $d\ge 4$  with $\mu(I)=\mu^s(n,d)+1=2n+2$,  revealing how
the power of combinatorics tools can allow us to deduce pure geometric
properties of projections of $n$-dimensional Veronese varieties $V(n,d)$.
To prove it, we will associate to any artinian monomial ideal a polytope
and the toric variety $X=X_{n,(I^{-1})_d}$ introduced in \S 2 B. Hence,
we will be able to tackle our problem with tools  coming from
combinatorics. In fact, when we deal with artinian monomial ideals
$I\subset k[x_0,x_1,\cdots,x_n]$  the failure of the WLP can be
established by fairly easy combinatoric properties of the associated
polytope $P_I$. To state this result we need to  fix some extra notation.

\vskip 2mm
Let $I\subset k[x_0,x_1,\cdots,x_n]$ be an artinian monomial ideal
generated by monomials  of degree $d$ and let $I^{-1}$ be its inverse
system.  We denote by $\Delta _n$ the standard $n$-dimensional simplex
in the lattice $\ZZ^{n+1}$, we consider $d\Delta _n$ and we define the
polytope $P_I$ as the convex hull of the finite subset $A_I\subset
\ZZ^{n+1}$ corresponding to monomials of degree $d$  in $I^{-1}$. As
usual we define the sublattice $\text{ Aff}_{\ZZ}(A_I)$ in $\ZZ^{n+1}$
generated by $A_I$ as follows:
$$\text{ Aff}_{\ZZ}(A_I):=\{ \sum _{x\in A_I} n_x\cdot x \mid n_x\in
\ZZ, \quad \sum _{x\in A_I} n_x=1 \}.$$

\vskip 2mm

We have:

\begin{prop} \label{failureWLP}
Let $I\subset k[x_0,x_1,\cdots,x_n]$ be an artinian monomial ideal
generated by $r$ monomials of degree $d$. Assume $r\le {n+d-1\choose n-1}$.
Then, $I$ is a Togliatti system if and only if there exists a
hypersurface of degree $d-1$ containing  $A_I\subset \ZZ^{n+1}$.
 In addition, $I$ is a minimal Togliatti system
  if and only if any such hypersurface $F$  does not contain any
integral point of $d\Delta_n\setminus A_I$ except possibly  some of the
vertices of $d\Delta_n$.
\end{prop}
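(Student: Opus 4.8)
The key is to translate the failure of the WLP for a monomial ideal $I$ — which by Proposition~\ref{lem-L-element} is tested on the single linear form $L=x_0+\cdots+x_n$ — into a statement about the polynomial $F$ vanishing on the finite point configuration $A_I$. Recall that for an artinian monomial ideal, $(R/I)_{d-1}$ has a monomial basis given by the monomials of degree $d-1$ not in $I$, and similarly $(R/I)_d$ has a basis given by the monomials of degree $d$ not in $I$, i.e.\ exactly the monomials corresponding to the lattice points $A_I$. Since $r\le\binom{n+d-1}{n-1}=\dim R_{d-1}$, Theorem~\ref{teathm} tells us $I$ fails the WLP in degree $d-1$ precisely when $\times L\colon (R/I)_{d-1}\to(R/I)_d$ fails to be injective, equivalently when there is a nonzero $G\in(R/I)_{d-1}$ with $LG\in I$. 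The plan is to show that such a $G$ corresponds, via the standard apolarity pairing on $R_d$, to a degree-$(d-1)$ hypersurface through $A_I$, and conversely.

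First I would set up the duality explicitly. Identify $R_d$ with its dual by the monomial pairing $\langle x^a, y^b\rangle = \delta_{ab}$ (up to the usual multinomial scalars, which are irrelevant here). Write $G=\sum_{|b|=d-1} c_b\, x^b \in R_{d-1}$. The condition $LG\in I$ means that for every monomial $x^a$ of degree $d$ \emph{not} in $I$ — that is, for every $a\in A_I$ — the coefficient of $x^a$ in $LG = (x_0+\cdots+x_n)G$ vanishes; coefficients on monomials in $I$ are unconstrained. Now the coefficient of $x^a$ in $LG$ is $\sum_{i\,:\,a_i\ge 1} c_{a-e_i}$. The second step is to recognize this linear expression as the evaluation at the point $a$ of a degree-$(d-1)$ form in $n+1$ variables: indeed, if we set $\Phi(t_0,\dots,t_n)=\sum_{|b|=d-1} c_b\binom{d-1}{b}^{-1}\! t^b$ or, cleaner, use the identity that the operator $G\mapsto LG$ followed by extracting coefficients along $A_I$ is the transpose of the restriction-to-$A_I$ of the linear map sending a form to its values — one checks directly that $\sum_{i:a_i\ge1} c_{a-e_i}=0$ for all $a\in A_I$ is the same as asking the polynomial $F(t)=\sum_{|b|=d-1}c_b\,t^b/b!$ (or an appropriate normalization) to vanish at each lattice point of $A_I$, using that a monomial $t^{a-e_i}$ evaluated suitably recovers the multinomial weights. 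The cleanest route is the classical fact (already used implicitly in \cite{MMO}): the monomials $F_1,\dots,F_r$ become linearly dependent on a general hyperplane $H=\{\sum \ell_i x_i=0\}$ iff there is a nonzero linear relation $\sum \lambda_j F_j = $ (something divisible by the general linear form), which by genericity of $H$ and homogeneity amounts to the existence of a degree $d-1$ form $F$ with $F(a)=0$ for all $a\in A_I$. So the equivalence "$I$ Togliatti $\iff$ $\exists$ degree-$(d-1)$ hypersurface through $A_I$" follows by combining Theorem~\ref{teathm}(2) with this linear-algebra bookkeeping.

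For the minimality statement, I would argue as follows. A generator $m_j=x^{a_j}$ (with $a_j\notin A_I$, $a_j\in d\Delta_n$) is redundant for the Togliatti property exactly when the larger ideal $I'=(I \text{ minus } m_j)$ is still a Togliatti system, i.e.\ $A_{I'}=A_I\cup\{a_j\}$ still lies on some degree-$(d-1)$ hypersurface. Thus $I$ is a \emph{minimal} Togliatti system iff for no point $a\in d\Delta_n\setminus A_I$ corresponding to an actual generator does $A_I\cup\{a\}$ lie on a hypersurface of degree $d-1$. Here one must be careful about the vertices $d e_i$ of $d\Delta_n$: the monomials $x_i^d$ always lie in $I$ (every artinian monomial ideal generated in degree $d$ contains them), but removing $x_i^d$ from the generators would make $I$ non-artinian, so these are never "removable generators" in the relevant sense — which is why the statement allows $F$ to pass through some vertices of $d\Delta_n$. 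Putting this together: if some degree-$(d-1)$ hypersurface $F$ through $A_I$ also contains a non-vertex point $a\in d\Delta_n\setminus A_I$, then $x^a$ is a genuine generator of $I$ (a degree-$d$ monomial in $I$, not a vertex, hence a minimal generator unless it is divisible by some $x^{a'}$ with $a'\in d\Delta_n\setminus A_I$ of smaller... — one reduces to the case where $a$ is a minimal generator by passing to a minimal such point on $F$), and $F$ witnesses that $I\setminus\{x^a\}$ is still Togliatti, contradicting minimality; conversely, if $I$ is not minimal, some generator can be dropped, producing exactly such a hypersurface. \textbf{The main obstacle} I expect is precisely this last bookkeeping: converting "$A_I\cup\{a\}$ lies on a degree $d-1$ hypersurface for some removable point $a$" into "$x^a$ is a minimal generator of $I$" and handling the vertex exceptions cleanly — the geometry is transparent but one must phrase the minimality of generators correctly, perhaps by taking $a$ to be a lattice point of $d\Delta_n\setminus A_I$ on $F$ that is minimal under divisibility, and noting that if all such minimal points were vertices then dropping any generator would fail to keep the ideal artinian.
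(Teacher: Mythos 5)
Your reduction of the Togliatti condition to non-injectivity of $\times L$ with $L=x_0+\cdots+x_n$ is fine (though note $\binom{n+d-1}{n-1}$ is not $\dim R_{d-1}=\binom{n+d-1}{n}$; the role of the hypothesis $r\le\binom{n+d-1}{n-1}$ is that it gives $\dim (R/I)_d\ge \dim (R/I)_{d-1}$, so maximal rank means injective), and your treatment of the minimality statement is essentially correct: a non-vertex point of $d\Delta_n\setminus A_I$ lying on some $F$ corresponds to a degree-$d$ generator whose removal keeps the ideal artinian and keeps $A_{I'}=A_I\cup\{a\}$ on a hypersurface of degree $d-1$, and conversely; the vertices are excepted exactly because dropping $x_i^d$ destroys artinianity. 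The genuine gap is the central bridge from ``there is a nonzero $G=\sum_b c_bx^b$ with $\sum_{i:a_i\ge 1}c_{a-e_i}=0$ for all $a\in A_I$'' to ``some form of degree $d-1$ vanishes at all points of $A_I$''. Your proposed identification via a normalization $F(t)=\sum_b c_b t^b/b!$ is false pointwise: for $n=1$, $d=3$, $a=(2,1)$ the coefficient functional is $c_{(2,0)}+c_{(1,1)}$ while the evaluation functional is $2c_{(2,0)}+2c_{(1,1)}+\frac{1}{2}c_{(0,2)}$, so no rescaling of the monomial basis turns coefficient extraction along $A_I$ into evaluation at $A_I$ (the two subspaces of functions on the lattice points of $d\Delta_n$ genuinely differ; only their intersection patterns with coordinate subspaces agree, and that is what has to be proved). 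Your fallback --- the ``classical fact'' that linear dependence of the generators on a general hyperplane amounts to a degree-$(d-1)$ form vanishing at the complementary lattice points --- is precisely the statement at issue, so as written the argument is circular at its key step. This bridge is exactly what the paper gets by citing Perkinson's Proposition 1.1 together with Theorem \ref{teathm}.

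To fill the gap you would have to argue along Perkinson's lines, i.e.\ through condition (3) of Theorem \ref{teathm}: the variety $X=X_{n,(I^{-1})_d}$ is parametrized by $\phi(t)=(t^a)_{a\in A_I}$, and at the torus point $(1,\dots,1)$ the $a$-th coordinate of $\partial^u\phi$ is the falling factorial $\prod_i a_i(a_i-1)\cdots(a_i-u_i+1)$, a polynomial of degree $|u|$ in $a$. Since these functions for $|u|\le d-1$ span exactly the polynomials of degree $\le d-1$, and on the hyperplane $\sum_i a_i=d$ these are the restrictions of homogeneous forms of degree $d-1$, the $(d-1)$-osculating space is deficient at a general point if and only if such a form vanishes on $A_I$; the assumption $r\le\binom{n+d-1}{n-1}$ guarantees the ambient dimension is large enough for this deficiency to be a genuine Laplace equation. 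Without an argument of this type (or an explicit appeal to Perkinson's result, as in the paper), the first equivalence is not established; the minimality half, which you handle correctly, only works once that equivalence is in place, since you apply it to the smaller ideal $I'$.
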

\begin{proof} It follows from   Theorem \ref{teathm} and \cite{P},
Proposition 1.1.
\end{proof}

Let us illustrate the above proposition with a precise example.

\begin{ex} \rm
The artinian ideal $I=(x_0,x_1)^3+(x_2,x_3)^3\subset
k[x_0,x_1,x_2,x_3]$  defines a minimal monomial Togliatti system of
cubics. In fact, the set  $A_I\subset \ZZ^4$ is:
$$A_I=\{ (2,0,1,0), (1,0,2,0),(2,0,0,1),(1,0,0,2),(0,2,1,0),(0,1,2,0),$$
$$(0,2,0,1),(0,1,0,2),
(1,1,1,0),(1,1,0,1),(1,0,1,1),(0,1,1,1)\}.$$
There is a hyperquadric,  and only one, containing all points of
$A_I$ and no integral point of $3\Delta_3\setminus A_I$, namely,
$$Q(x_0,x_1,x_2,x_3)=2(x_0^2+x_1^2+x_2^2+x_3^2)+4(x_0x_1+x_2x_3)-5(x_0x_2+x_0x_3+x_1x_2+x_1x_3).$$

\end{ex}

For seek of completeness we also recall the following useful
combinatorial criterion which will allow us to check if a subset $A$ of
points in the lattice $\ZZ^{n+1}$ defines a smooth toric variety $X_A$
or not.

\begin{prop} \label{smoothness}
 Let $I\subset k[x_0,x_1,\cdots,x_n]$ be an artinian monomial ideal
generated by monomials  of degree $d$. Let $A_I\subset \ZZ^{n+1}$ be the
set of integral points corresponding to monomials in $(I^{-1})_d$, $S_I$
the semigroup generated by $A_I$ and 0, $P_I$ the convex hull of $A_I$
and $X_{A_I}$ the projective toric variety associated to the polytope $P_I$.
 $X_{A_I}$ is smooth if and only if for any non-empty face $\Gamma $ of
$P_I$ the following conditions hold:

 (i) The semigroup $S_I/\Gamma$ is isomorphic to $\ZZ^m_+$ with
$m=\dim(P_I)-\dim \Gamma +1$.

 (ii) The lattices $\ZZ^{n+1}\cap \text{ Aff}_{\RR}(\Gamma )$ and
$\text{ Aff}_{\ZZ}(A_I\cap \Gamma)$ coincide.

\end{prop}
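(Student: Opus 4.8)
The result is the standard combinatorial criterion for smoothness of the (possibly non-normal) projective toric variety attached to a point configuration, and the plan is to reduce it to the affine-local structure of $X_{A_I}$ and then to the well-known characterization of smooth affine toric varieties. First I would recall that, fixing homogeneous coordinates $x_a$ ($a\in A_I$) on $\PP^{r-1}$ so that $X_{A_I}$ is the closure of the image of the monomial map $t\mapsto (t^a)_{a\in A_I}$, the variety $X_{A_I}$ carries an action of the torus with orbit decomposition indexed by the faces $\Gamma$ of $P_I$: the orbit $O_\Gamma$ consists of the points whose nonvanishing coordinates are exactly those indexed by $A_I\cap\Gamma$, and it has dimension $\dim\Gamma$. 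Each $O_\Gamma$ has a smallest torus-invariant affine open neighbourhood $U_\Gamma$, which is the affine toric variety $\operatorname{Spec} k[S_I/\Gamma]$, with $S_I/\Gamma$ the semigroup obtained from $S_I$ by inverting the characters indexed by $A_I\cap\Gamma$ (equivalently, passing to the cone over $P_I$ and localizing at the face over $\Gamma$). Since the $U_\Gamma$ with $\Gamma$ a vertex already cover $X_{A_I}$, and since on an affine toric variety the smooth locus is open and torus invariant with the closed orbit lying in the closure of every orbit, $X_{A_I}$ is smooth if and only if each $U_\Gamma$ is smooth, equivalently smooth at (any, hence the distinguished) point of $O_\Gamma$; the orbit-face correspondence then lets one phrase this uniformly over all faces $\Gamma$.

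Next I would invoke the classical fact that an affine toric variety $\operatorname{Spec} k[M]$, with $M$ a finitely generated cancellative torsion-free commutative monoid, is smooth if and only if $M$ is free, i.e. $M\cong\N^a\times\ZZ^b$, in which case $\operatorname{Spec} k[M]\cong\mathbb{A}^a\times(k^*)^b$. Applied to $M=S_I/\Gamma$ this is exactly condition (i), the value $m=\dim P_I-\dim\Gamma+1$ being forced by a dimension count: $O_\Gamma$ has codimension $\dim P_I-\dim\Gamma$ in $X_{A_I}$, and the extra $+1$ records the homogenizing coordinate one carries along by working inside $\ZZ^{n+1}$ rather than inside the rank-$n$ lattice of the affine hyperplane $\{\sum_i x_i=d\}$ containing $P_I$; I would double-check this indexing against the extreme cases $\Gamma$ a vertex and $\Gamma=P_I$. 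The subtlety is that ``$M$ free'' is an intrinsic property of the abstract monoid, whereas $S_I/\Gamma$ is handed to us together with an embedding into a quotient of $\ZZ^{n+1}$, and this is precisely where condition (ii) enters.

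Thus the second half of the argument is to show that condition (ii), the equality $\ZZ^{n+1}\cap\text{Aff}_{\RR}(\Gamma)=\text{Aff}_{\ZZ}(A_I\cap\Gamma)$, is equivalent to the sublattice of $\ZZ^{n+1}$ spanned by the differences $a-a'$ ($a,a'\in A_I\cap\Gamma$) being saturated, equivalently to the relevant quotient lattice being torsion free. I would argue that when (ii) holds the unit group of $S_I/\Gamma$ is a direct summand of the ambient lattice, so a splitting identifies $S_I/\Gamma$ with the product of its pointed part and this free unit group inside a lattice in which condition (i) correctly detects freeness; and that when (ii) fails the nontrivial torsion produces a finite abelian quotient singularity of $X_{A_I}$ along $O_\Gamma$, independently of (i). Conjoining the two equivalences and letting $\Gamma$ range over all faces of $P_I$ yields the proposition.

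The part I expect to require the most care is the bookkeeping: pinning down the exact semigroup $S_I/\Gamma$ attached to a face $\Gamma$ of a polytope lying in the affine hyperplane $\{\sum_i x_i=d\}$, getting the formula $m=\dim P_I-\dim\Gamma+1$ right with the ``$+1$'' coming from homogenization, and keeping the two failure modes --- $S_I/\Gamma$ not free (condition (i)) versus the ambient lattice not saturated (condition (ii)) --- cleanly separated, so that their conjunction is seen to be exactly local smoothness of $X_{A_I}$ along $O_\Gamma$. Since this is a standard fact in toric geometry, an equally legitimate route would be simply to cite it and skip the verification.
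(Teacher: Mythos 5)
Your proposal is mathematically reasonable, but note that the paper does not prove this proposition at all: its entire ``proof'' is the citation to \cite{GKZ}, Chapter 5, Corollary 3.2, together with the observation that in this situation $X_{A_I}=X_{n,(I^{-1})_d}$ --- exactly the ``equally legitimate route'' you mention in your last sentence. What you propose instead is a reconstruction of the standard argument behind that citation: the orbit--face correspondence for the toric variety of a point configuration, the smallest invariant affine chart around the orbit $O_\Gamma$ being the spectrum of the semigroup algebra of a localization of $S_I$ along $\Gamma$, the fact that an affine monoid algebra is smooth iff the monoid is free (of the form $\N^a\times\ZZ^b$), and the reading of condition (ii) as saturation/torsion-freeness of the lattice generated by $A_I\cap\Gamma$ inside $\ZZ^{n+1}\cap\mathrm{Aff}_{\RR}(\Gamma)$. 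That architecture is correct and would make the statement self-contained, which the citation does not; what the citation buys is precisely the bookkeeping you yourself flag as delicate and leave unresolved: the exact convention defining $S_I/\Gamma$ (a genuine localization has nontrivial units, so for it to be literally isomorphic to $\ZZ^m_+$ one must use GKZ's convention, e.g.\ passing to the quotient by the face or by the unit group), and the verification that the correct rank is $m=\dim(P_I)-\dim\Gamma+1$, with the $+1$ coming from working with the cone over $P_I$ in $\ZZ^{n+1}$. Since the paper only uses the proposition as a black-box test for smoothness in its examples and classifications (after Figure 1, and in Theorems \ref{mainthm1} and \ref{mainthm2} and Proposition \ref{triv smooth}), the citation is the economical choice; if you carry out your route, you must pin those conventions down against \cite{GKZ} rather than re-derive them loosely, but no step of your plan is wrong in principle.
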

\begin{proof} See \cite{GKZ}; Chapter 5, Corollary 3.2. Note that in
this case  $X_{A_I}=X_{n,(I^{-1})_d}$.
\end{proof}

\vskip 2mm
Figure 1 illustrates two examples of minimal Togliatti systems which are
non-smooth. The points of the complementary of $A_I$ are marked with a cross.

\vskip 4mm

\begin{figure}[h]\label{1}
    {\includegraphics[width=40mm]{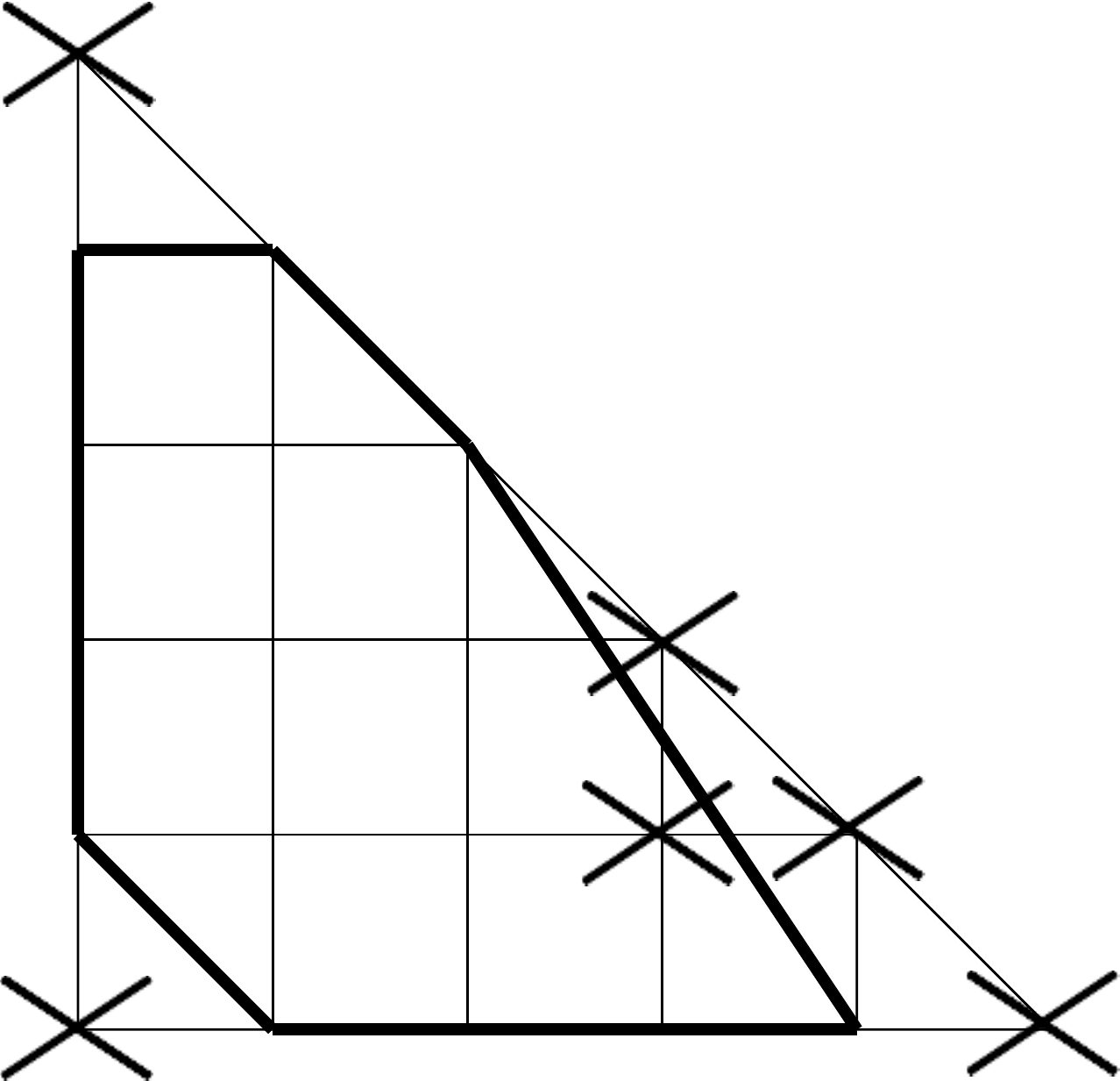}\  \  \  \  \ \  \  \
 \  \  \  \  \includegraphics[width=40mm]{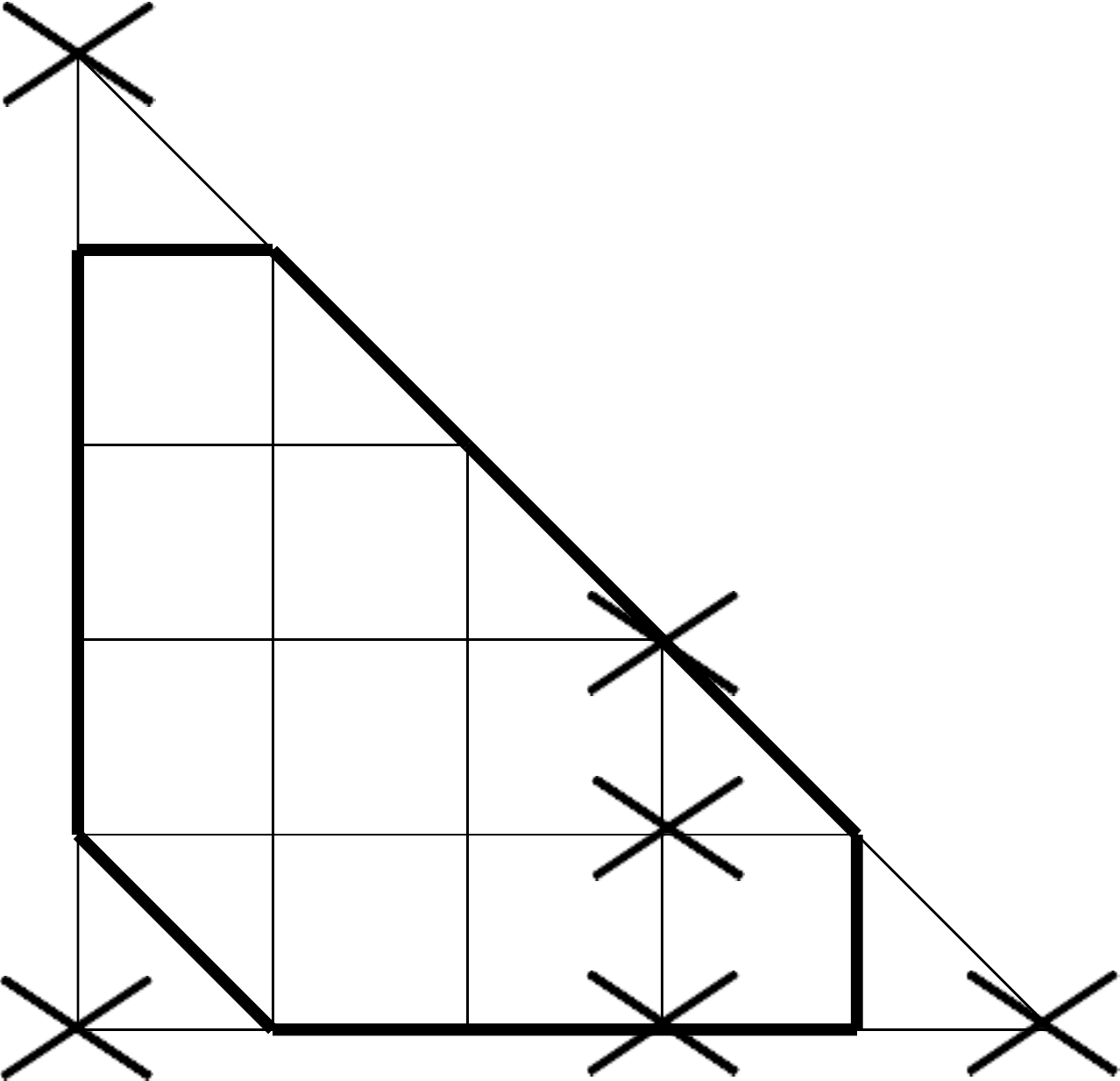}}
    \caption{Non-smooth Togliatti systems with $n=2$ and $d=5$}
    \end{figure}

 The condition (i) of Proposition \ref{smoothness} is verified if and only if  translating each vertex $v$ of the polygon to the origin of $\ZZ^2$ and considering for each edge coming out of $v$  the first point with integer coordinates, these form a $\ZZ$-basis of $\ZZ^2$. The condition (ii) is equivalent to require that each point of $\ZZ^2$ which lies on an edge of the polygon is also a point of $A_I$. Therefore, the first figure violates condition (i) and the second one violates condition (ii).

\vskip 4mm
In order to achieve the classification of minimal (resp. smooth)
monomial  Togliatti systems $I\subset k[x_0,\cdots ,x_n]$ of degree
$d\ge 4$ with $\mu(I)$ as small as possible we need to introduce  one
more definition.

\begin{defn} \rm A Togliatti system  $I\subset k[x_0,x_1,\cdots,x_n]$ of
forms of degree $d$ is said to be {\em trivial} if
there exists a form $F$ of degree $d-1$  such that $I$ contains $x_0F,
\cdots,x_nF$.
\end{defn}

The following remark justifies why we call them trivial.
\begin{rem} \rm
(i) Let  $F$ be a homogeneous form of degree $d-1$. Since
$x_0F, x_1F, \cdots ,x_nF$
become linearly dependent  on the hyperplane $x_0+\cdots +x_n=0$,  using
Proposition  \ref{lem-L-element}, we conclude that any artinian  ideal
of the form
$I=( x_0,\cdots ,x_n)F+( F_1,\cdots
,F_s)$
is a (trivial) Togliatti system. In the monomial case, looking at the
inverse system that parameterizes the surface $X$, we can observe
that it satisfies a Laplace equation of the simplest form, given by the
annihilation of the partial derivative of order $d-1$ corresponding to
the monomial $F$.

(ii) Let $I\subset k[x_0,x_1,\cdots ,x_n]$ be a monomial Togliatti
system of cubics. If $I$ is trivial  then it is not smooth. \end{rem}

\begin{thm}\label{mainthm1} For any integer $n\ge 2$ and $d\ge 4$, we
have $\mu ^s(n,d)=\mu(n,d)=2n+1$. In particular, if
$I\subset k[x_0,x_1, \cdots ,x_n]$ is a minimal (resp.  smooth minimal)
 monomial Togliatti system of forms of degree $d$, then $\mu(I)\ge 2n+1$.

In addition, {\em all}  minimal  monomial Togliatti systems $I\subset
k[x_0,\cdots ,x_n]$ of forms of degree $d\ge 4$ with $\mu(I)=2n+1$ are
trivial  
unless  one of the following cases holds:
 \begin{itemize}
 \item[(i)] $(n,d)=(2,5)$ and, up to a permutation of the coordinates,
$I=(x_0^5,x_1^5,x_2^5,x_0^3x_1x_2,x_0x_1^2x_2^2)$.
 \item[(ii)] $(n,d)=(2,4)$ and, up to a permutation of the coordinates,
$I=(x_0^4,x_1^4,x_2^4,x_0x_1x_2^2,x_0^2x_1^2)$.
 \end{itemize}

 Furthermore, (i) is smooth and (ii) is not smooth.
\end{thm}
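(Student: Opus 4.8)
The plan is to establish the lower bound $\mu(n,d)\ge 2n+1$ together with the classification of the systems attaining it, and then to exhibit a \emph{smooth} minimal monomial Togliatti system with exactly $2n+1$ generators; combined with $\mu(n,d)\le\mu^{s}(n,d)$ this yields $\mu(n,d)=\mu^{s}(n,d)=2n+1$. I work throughout with the polytope description: if $I$ is generated in degree $d$ by monomials, its minimal generators are exactly the lattice points of $d\Delta_n\setminus A_I$, so $\mu(I)=\binom{n+d}{n}-\#A_I$, and Proposition~\ref{failureWLP} supplies a nonzero form $G$ of degree $d-1$ vanishing at every point of $A_I$ and, by minimality, nonzero at every point of $d\Delta_n\setminus A_I$ other than the vertices $de_0,\dots,de_n$. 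I will also use the (standard, interpolation-type) fact that a form of degree $\le d-1$ vanishing at \emph{all} lattice points of $d\Delta_n$ is zero.

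\textbf{The lower bound.} Every artinian monomial ideal generated in degree $d$ contains $x_0^d,\dots,x_n^d$, and $(x_0^d,\dots,x_n^d)$ has the WLP, so a Togliatti system has at least one more generator; write the minimal generators as $x_0^d,\dots,x_n^d$ and $k$ mixed monomials $m_1,\dots,m_k$, so $\mu(I)=n+1+k$ and we must show $k\ge n$. The idea is to read off constraints on $G$ along the edges $E_{ij}=[de_i,de_j]$ of $d\Delta_n$: on $E_{ij}$, $G$ restricts to a univariate polynomial of degree $\le d-1$, hence vanishes along all of $E_{ij}$ as soon as it vanishes at $d$ of the $d+1$ lattice points of $E_{ij}$. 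Each $m_j$ has at most two nonzero coordinates, so it lies in the relative interior of at most one edge and thus ``blocks'' at most one $E_{ij}$; on every unblocked edge $G$ vanishes at all $d-1$ interior lattice points, hence along the whole edge as soon as it vanishes at one endpoint. Let $\Gamma$ be the graph on $\{0,\dots,n\}$ with an edge for each unblocked $E_{ij}$. If $k\le n-1$, then $\Gamma$ is $K_{n+1}$ with at most $n-1$ edges removed, hence connected ($K_{n+1}$ is $n$-edge-connected), so once $G$ vanishes at one vertex it vanishes at all vertices and along all unblocked edges; iterating — each time a coordinate hyperplane $\{y_i=0\}$ has been shown to be swept out one factors $y_i$ out of $G$ and repeats in the reduced degree — one forces $G$ to vanish at every lattice point of $d\Delta_n$, whence $G=0$, a contradiction. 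I expect the genuinely delicate point here to be organizing this descent and, inside it, handling the case where $G$ vanishes at \emph{no} vertex (so the trigger ``$G$ vanishes at one vertex'' is empty): there the univariate restriction of $G$ to each unblocked edge is forced to have degree exactly $d-1$ with prescribed roots, and comparing these restrictions across the unblocked edges meeting a common vertex still yields the contradiction.

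\textbf{The classification.} In the equality case $k=n$ the same analysis is pushed to its limit. Since the only $n$-edge cuts of $K_{n+1}$ are the stars, $\Gamma$ remains connected — and the contradiction above applies — unless the $n$ blocked edges form exactly the star at one vertex, say vertex $0$; thus, after permuting coordinates, $m_i$ lies in the relative interior of $E_{0i}$, i.e. $m_i=x_0^{a_i}x_i^{d-a_i}$ with $1\le a_i\le d-1$. Running the edge/facet argument on the remaining edges then forces $G$ to vanish along the facet $\{a_0=0\}$ and to be divisible by $y_0$, and tracking which interior lattice points of the $E_{0i}$ are allowed to remain nonzero pins down $a_i=d-1$ for all $i$, so $I=(x_0^d,\dots,x_n^d,x_0^{d-1}x_1,\dots,x_0^{d-1}x_n)$, which is trivial since it contains $(x_0,\dots,x_n)\,x_0^{d-1}$. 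The exceptions arise precisely when $d$ is too small for this rigidity: after reducing by the iterated factoring, one is left with a low-degree form on a small simplex $d'\Delta_2$ that must vanish at all but one of its lattice points, and an elementary finite inspection of these configurations produces exactly the two sporadic systems listed in (i) and (ii); Proposition~\ref{smoothness} then shows the first is smooth and the second is not. I would present this exceptional analysis as the second main obstacle: excluding every ``near-star'' position of the $m_i$ other than $a_i=d-1$ is an involved but finite combinatorial case distinction, specific to $n=2$ and $d\in\{4,5\}$.

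\textbf{Attainment and smoothness.} Finally I verify that $I_0:=(x_0^d,\dots,x_n^d,x_0^{d-1}x_1,\dots,x_0^{d-1}x_n)$ is a smooth minimal monomial Togliatti system. It fails the WLP in degree $d-1$ because $(x_0+\dots+x_n)\,x_0^{d-1}\in I_0$ while $x_0^{d-1}\notin I_0$, so by Theorem~\ref{teathm} (note $2n+1\le\binom{n+d-1}{n-1}$ for $d\ge4$) it is a Togliatti system. Its associated polytope is the frustum $P_{I_0}=\{b\in\RR^n_{\ge0}:2\le b_1+\dots+b_n\le d\}$, whose only vertices are the $2e_i$ and the $de_i$; a direct computation shows that at each the primitive edge directions form a $\ZZ$-basis and that every lattice point lying on an edge belongs to $A_{I_0}$, so $X_{n,(I_0^{-1})_d}$ is smooth by Proposition~\ref{smoothness}. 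For minimality, deleting an $x_i^d$ destroys artinian-ness, while deleting $x_0^{d-1}x_j$ forces — applying the edge argument to $E_{0j}$ and then to the other $E_{0\ell}$ and $E_{\ell m}$ — the associated $G$ to vanish at every lattice point of $d\Delta_n$, hence $G=0$, so the smaller ideal is not a Togliatti system. Thus $\mu^{s}(n,d)\le 2n+1$, and together with the lower bound, $\mu^{s}(n,d)=\mu(n,d)=2n+1$.
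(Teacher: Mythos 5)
The attainment/smoothness half of your plan is fine in outcome, but the heart of the theorem --- the bound $\mu(I)\ge 2n+1$ and the classification of the equality case --- rests on the edge-restriction/graph-connectivity argument, and that argument has a genuine gap exactly where you flag it: the case where the degree-$(d-1)$ form $G$ vanishes at no vertex of $d\Delta_n$. This case is not a technicality to be ``organized''; it cannot be closed with edge data at all. Concretely, for the paper's own exceptional system $(x_0^5,x_1^5,x_2^5,x_0^3x_1x_2,x_0x_1^2x_2^2)$ the quartic $F_4$ vanishes at all twelve interior lattice points of the three edges of $5\Delta_2$ and at none of the three vertices, and neither extra monomial lies on an edge (each involves all three variables), so every edge is ``unblocked'' and $\Gamma=K_3$ is connected. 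Thus the configuration ``connected $\Gamma$, vanishing on all edge interiors, no vertex vanishing'' is realizable by a nonzero form of degree $d-1$: no comparison of edge restrictions at common vertices can yield a contradiction, because the difference between $k\le n-1$ and $k=n$ extra monomials lives entirely in the interior lattice points of $d\Delta_n$, which your no-vertex case never uses. The same example breaks your classification scheme: its two extra monomials block no edge, so the blocked edges do not form a star, yet the system exists --- so ``$\Gamma$ connected $\Rightarrow$ contradiction'' is false, and taken literally your argument would exclude the exceptional cases (i) and (ii) that the theorem must produce; the closing remark about ``$d$ too small for this rigidity'' is not an argument and is inconsistent with the earlier claim. (Minor but symptomatic: a mixed monomial can have three or more nonzero exponents, so the sentence ``each $m_j$ has at most two nonzero coordinates'' is false, although ``blocks at most one edge'' survives.)

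There is a second structural problem for $n\ge 3$: the descent ``factor $y_i$ out of $G$ once $\{y_i=0\}$ is swept out'' requires $G$ to vanish at \emph{all} lattice points of the facet, most of which lie on no edge, so edge vanishing never gives divisibility by a coordinate. What is missing is precisely the mechanism the paper uses: slice $A_I$ by the hyperplanes $a_0=i$, invoke induction on $n$ to see that the restriction of $F_{d-1}$ to the facet $a_0=0$ (which is $d\Delta_{n-1}$ minus the vertices and at most $n-2$ further points) must vanish identically, split off the linear factor, and repeat through the slices until a nonempty slice $A_I^{d-1}$ is missed; the classification, including the two sporadic systems for $n=2$, $d\in\{4,5\}$, then comes from an explicit case analysis on the exponents of $m_1,m_2$, not from edge combinatorics. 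Finally, in your attainment step the polytope of $(x_0^d,\dots,x_n^d)+x_0^{d-1}(x_1,\dots,x_n)$ is not the frustum you describe and its vertices are not just $2e_i$ and $de_i$ (for instance $(d-1)e_i+e_j$ occurs, while $de_i\notin A_I$), so the smoothness verification via Proposition \ref{smoothness} needs to be redone on the correct polytope, even though the conclusion is true.
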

\begin{proof} First of all we observe that  $I=(x_0^d,x_1^d,\cdots
,x_n^d)+x_0^{d-1}(x_1,\cdots , x_n)\subset k[x_0,\cdots ,x_n]$ is a
minimal monomial Togliatti system of forms of degree $d$ and by
Proposition \ref{smoothness}, being $d\geq 4$, it is smooth.  Thus,
$\mu(n,d)\le \mu^s(n,d)\le 2n+1$ .

To prove that $\mu(n,d)=2n+1$, we have to check that any monomial
artinian ideal  $I=(x_0^d,\cdots ,x_n^d,x_0^{a^1_0}x_1^{a^1_1}\cdots
x_n^{a_n^1},\cdots ,x_0^{a_0^{n-1}}x_1^{a^{n-1}_1}\cdots
x_n^{a_n^{n-1}})$ with $\sum _{i=0}^na_{i}^j=d\ge 4$, $1\le j\le n-1$
has the WLP at the degree  $d-1$.
According to Proposition \ref{failureWLP}, to prove the last assertion
it is enough to prove that  no  hypersurface of degree $d-1$ contains
all  points of $A_I\subset \ZZ^{n+1}$, where as before  $A_I\subset
\ZZ^{n+1}$ is the set of all integral  points corresponding to monomials
of degree $d$ in $I^{-1}$.
For any integer $0\le i \le d$, we set $H_i=\{(a_0,\cdots ,a_n)\in
\ZZ^{n+1} \mid a_0=i\}$ and   $A_I^i:=A_I\cap H_i$; we have $A_I=\cup
_{i=0}^dA_I^d$.

\vskip 2mm
To illustrate this method, in Figure 2   we show the pictures of the sets $A_I$, and  $A_I^0$, $A_I^1$, $A_I^2$, $A_I^3$, 
when $I=(x_0^4, x_1^4, x_2^4, x_0^2x_1x_2)$.
\begin{figure}[h]\label{2}
    {\includegraphics[width=40mm]{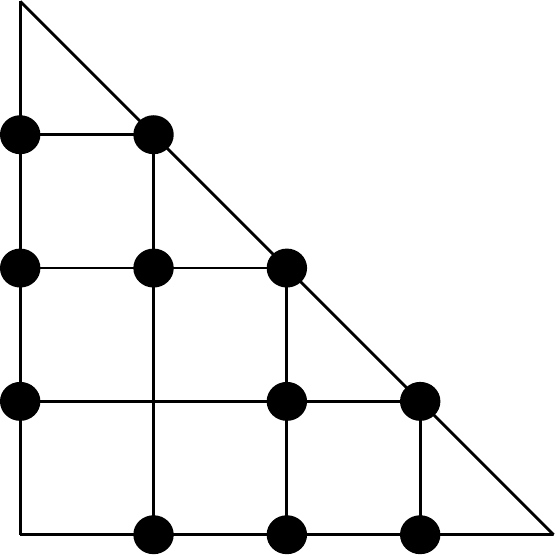}}
    \caption{$A^I$ with $I=(x_0^4,x_1^4, x_2^4, x_0^2x_1x_2)$}
    \end{figure}
\begin{figure}[h]
    {\includegraphics[width=35mm]{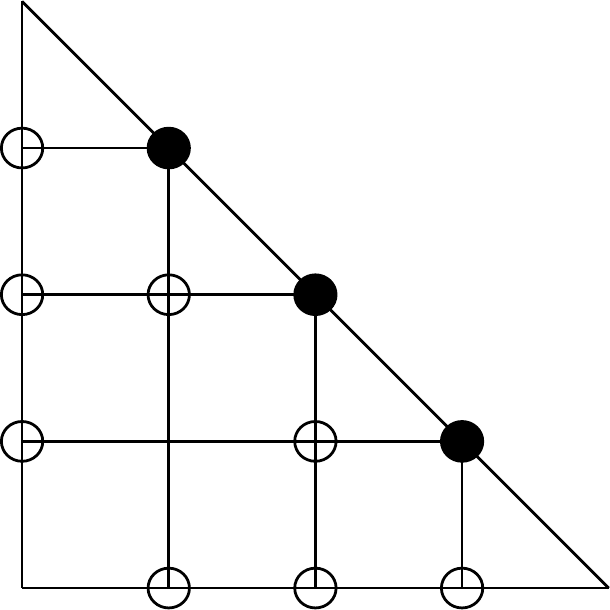}
 \  \   \includegraphics[width=35mm]{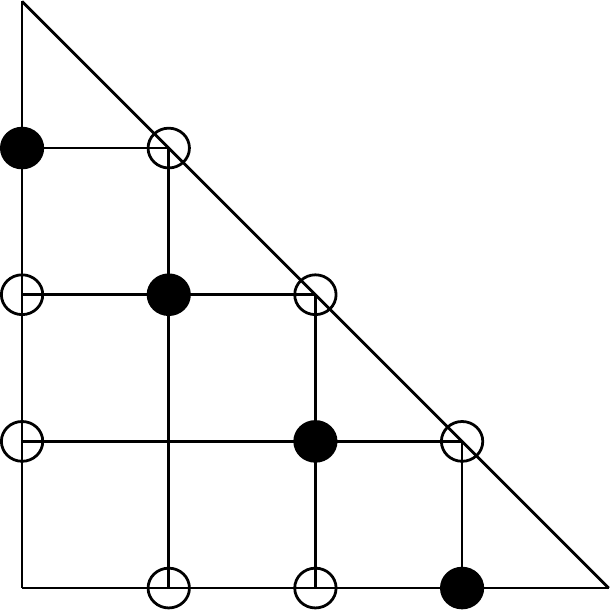}
 \   \  \includegraphics[width=35mm]{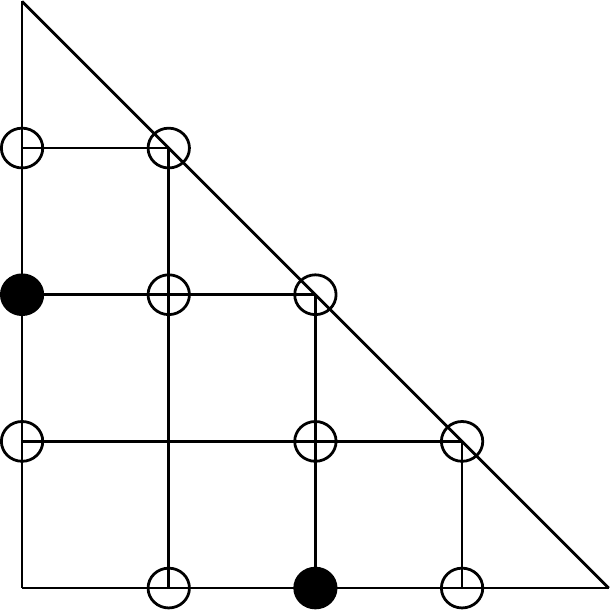} 
 \   \  \includegraphics[width=35mm]{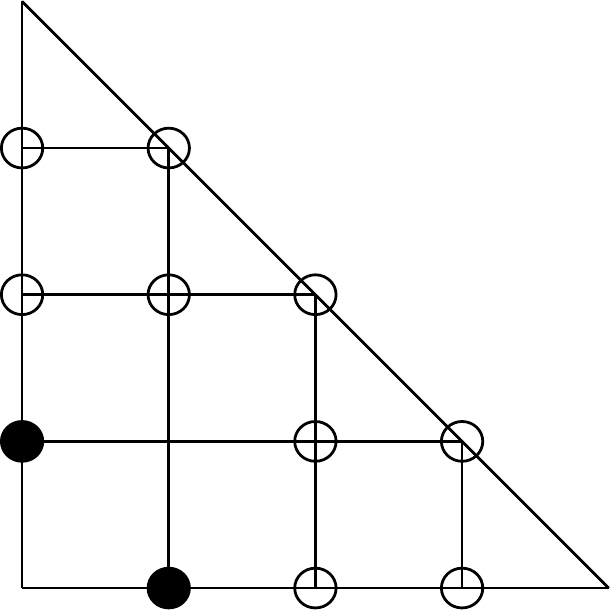} 
 
 $A_I^0$ \ \ \  \ \ \  \ \   \ \ \ \ \ \ \ \ \ \ \ \ \ \ \ \ \ \ \ $A_I^1$  \ \ \ \ \ \ \  \ \ \ \ \ \  \ \ \ \ \ \ \ \ \ \ \ \ \ \ $A_I^2$  \ \ \ \   \ \ \ \ \ \ \ \ \ \ \ \ \ \ \ \ \ \ \ \ \  $A_I^3$ }
    \end{figure}
\vskip 2mm

 We will prove now the theorem proceeding by induction on $n$. Let us start with the case
  $n=2$. We take  a monomial artinian ideal
$I=(x_0^d,x_1^d,x_2^d,x_0^{a_0^1}x_1^{a_1^1}x_2^{a_2^1})$ with
$a_0^1+a_1^1+a_2^1=d\ge 4$ and we show that  no  plane curve of degree
$d-1$ contains all points of $A_I\subset \ZZ^3$.  Since $4\le
d=a_0^1+a_1^1+a_2^1$, we can assume wlog that $2\le a_0^1$. We assume
that there is a plane curve $F_{d-1}$ of degree $d-1$ containing all
points of $A_I$ and we will get a contradiction. Since $F_{d-1}$
contains the $d$ points of $A_I^1$, it factorizes as
$F_{d-1}=L_1F_{d-2}$. Since $F_{d-2}$  contains the $d-1$ points
$A_I^0$, it factorizes as $F_{d-1}=L_0L_1F_{d-3}$. Now, if $a_0^1=2$,
then $A^2_I$  contains $d-2$ points, if $a^1_0>2$, then $A^2_I$
contains $d-1$ points; in any case $F_{d-3}=L_2F_{d-4}$ for a suitable
form $F_{d-4}$ of degree $d-4$. Repeating the argument we get that
$F_{d-1}=L_0L_1 \cdots L_{d-2}$, so $F_{d-1}$ does not contain the
points of $A_I^{d-1}$, which is non-empty by assumption. This
contradicts the existence
 of  a plane curve of degree $d-1$ containing all integral points of $A_I$.

Let now $n\geq 3$ and assume that the claim is true for $n-1$. Let us
prove that  no hypersurface  of degree $d-1$ contains all points of
$A_I\subset \ZZ^{n+1}$,
 where $$I=(x_0^d,\cdots ,x_n^d, x_0^{a^1_0}x_1^{a^1_1}\cdots
x_n^{a_n^1},\cdots ,x_0^{a_0^{n-1}}x_1^{a^{n-1}_1}\cdots
x_n^{a_n^{n-1}})$$ with $\sum _{i=0}^na_{i}^j=d\ge 4$, $1\le j\le n-1$.
Wlog we can assume $a_0^1\ge a_1^1\ge \cdots \ge a_n^1\ge 0$ and also
$a^1_0\geq a^2_0$. Therefore $a_0^1>0$, so $x_0$ appears explicitly in
the monomial $x_0^{a^1_0}x_1^{a^1_1}\cdots x_n^{a_n^1}$ and $A^0_I$ is
equal to  $d\Delta_{n-1}$ minus the $n$ vertices and at most $n-2$ other
points. By inductive assumption, no hypersurface in $n$ variables
  of degree $d-1$ contains $A^0_I$, so $F_{d-1}$
factorizes as $L_0F_{d-2}$, where $F_{d-2}$ is a hypersurface of degree
$d-2$ containing all points of $A_I\setminus A^0_I$.

 If  the $n-1$ monomials have $a_0^1=a_0^2=\cdots=a_0^{n-1}\le 1$, then
$A^2_I=(d-2)\Delta_{n-1}, \cdots, A^{d-1}_I=\Delta_{n-1}$ and we deduce
that $F_{d-1}=L_0L_2\cdots L_{d-1}$, because for $j=2,\cdots, d-1$ the
simplex $(d-j)\Delta_{n-1}$ is not contained in any hypersurface in
$n-1$ variables of degree $d-j$. This gives a contradiction because
$F_{d-1}$ misses all points of $A^1_I\neq\emptyset$. Otherwise,
$A^1_I=(d-1)\Delta_{n-1}$ minus at most $n-2$ points. Then by inductive
assumption there is no hypersurface of degree $d-1$ in $n-1$ variables
containing $A^1_I$. Then we repeat the argument until we reach a
contradiction.

   \vskip 2mm  Finally we will classify all minimal  monomial Togliatti
systems $I\subset k[x_0,\cdots ,x_n]$ of forms of degree $d\ge 4$ with
$\mu(I)=2n+1$. First we assume that $n=2$ and we will show that all of
them are  trivial  unless $d=5$ and
$I=(x_0^5,x_1^5,x_2^5,x_0^3x_1x_2,x_0x_1^2x_2^2)$ or $d=4$ and
$I=(x_0^4,x_1^4,x_2^4,x_0x_1x_2^2,x_0^2x_1^2)$.
    Take $I=(x_0^d,x_1^d,x_2^d, m_1, m_2)\subset k[x_0,x_1,x_n] $ with
$m_i=x_0^{a_0^i}x_1^{a_1^i} x_2^{a_2^i}$ and $\sum _{j=0}^2a_j^i =d$
a minimal Togliatti system. If there exists $0\le i \le 2$ such that
$a_i^1, a_i^2\ge 2$ (wlog we assume $i=0$) then the plane curve
$F_{d-1}$ containing all integral points of $A_I$ factorizes
$F_{d-1}=L_0L_1\cdots L_{d-2}$ and since $F_{d-1}$ cannot miss any point
of $A_I$  we must have $A_I^{d-1}=\emptyset $ which  forces
$m_1=x_0^{d-1}x_1$, $m_2=x_0^{d-1}x_2$. Assume now that for any $0\le i
\le 2$, there exists $1\le j \le 2$ with $a_i^j\le 1$. Since $d\ge 4$ we may assume
$a_0^1,a_1^1 \le 1$ and $a_2^2\le 1$. Therefore, $m_1\in \{ x_0x_1x_2^
 {d-2},x_0x_2^{d-1},x_1x_3^{d-1}\}$ and $m_2\in \{x_0^ax_1^{d-1-a}x_2,
x_0^{\alpha }x_1^{d-\alpha} \mid 0\le a, \alpha \le d-1 \}$.  But none
gives a minimal  Togliatti system because $ x_0^d,x_1^d,x_2^d,m_1,m_2$ are linearly independent
on a general line of $\PP^2$ (see Theorem \ref{teathm}) unless $d=5$ and $m_1=x_0x_1x_2^3$ and
$m_2=x_0^2x_1^2x_2$ or $d=4$ and $m_1=x_0^2x_1x_2$ and
$m_2=x_0x_1^2x_2^2$.
 Furthermore, applying Proposition \ref{smoothness},
we easily check that only
$I=(x_0^5,x_1^5,x_2^5,x_0^3x_1x_2,x_0x_1^2x_2^2)$ defines a smooth variety.

   Assume now $n\ge 3$ and $ d\ge 4$ and let $I=(x_0^d,x_1^d,\cdots
,x_n^d, m_1,\cdots, m_n)\subset k[x_0,\cdots ,x_n] $ with
$m_i=x_0^{a_0^i}x_1^{a_1^i}\cdots x_n^{a_n^i}$ and $\sum _{j=0}^na_j^i
=d$ be a   Togliatti system. There is an integer $j$, $0\le j\le n$ such
that  $\# \{i\mid a_j^i\ge 1\}\ge 2$. Therefore, wlog we can assume
$a_0^1,a_0^2\ge 1$. Arguing as in the previous part of the proof any
hypersurface $F_{d-1}$ of degree $d-1$ containing all integral points of
$A_I$ factorizes $F_{d-1}=L_0L_1\cdots L_{d-2}$ and since $F_{d-1}$
cannot miss any point of $A_I$  we must have $A_I^{d-1}=\emptyset $
which  forces $m_1=x_0^{d-1}x_1$, $m_2=x_0^{d-1}x_2,\cdots ,
m_n=x_0^{d-1}x_{n}$ and hence $I$ is trivial,  which proves what we want.
\end{proof}

\begin{rem} \rm  Minimal monomial Togliatti systems $I\subset
k[x_0,x_1,x_2]$ of forms of degree $d\ge 4$ with $\mu(I)=5$ were also
classified by Albini in \cite{A}; Theorem 3.5.1. So, our results can be
seen as a generalization of his result to the case of an arbitrary
number of variables.
\end{rem}

\begin{rem}\label{triv} \rm
Up to permutation of the variables, the trivial Togliatti systems with
$\mu(I)=2n+1$ are of the form $(x_1^d, \cdots, x_n^d)+x_0^{d-1}(x_0,
\cdots, x_n)$.
\end{rem}

Figure \ref{3} illustrates the only smooth non--trivial example of minimal
Togliatti system of forms of degree $5$ with $\mu(I)=5$.
\vskip 4mm

\begin{figure}[h]\label{3}
    {\includegraphics[width=40mm]{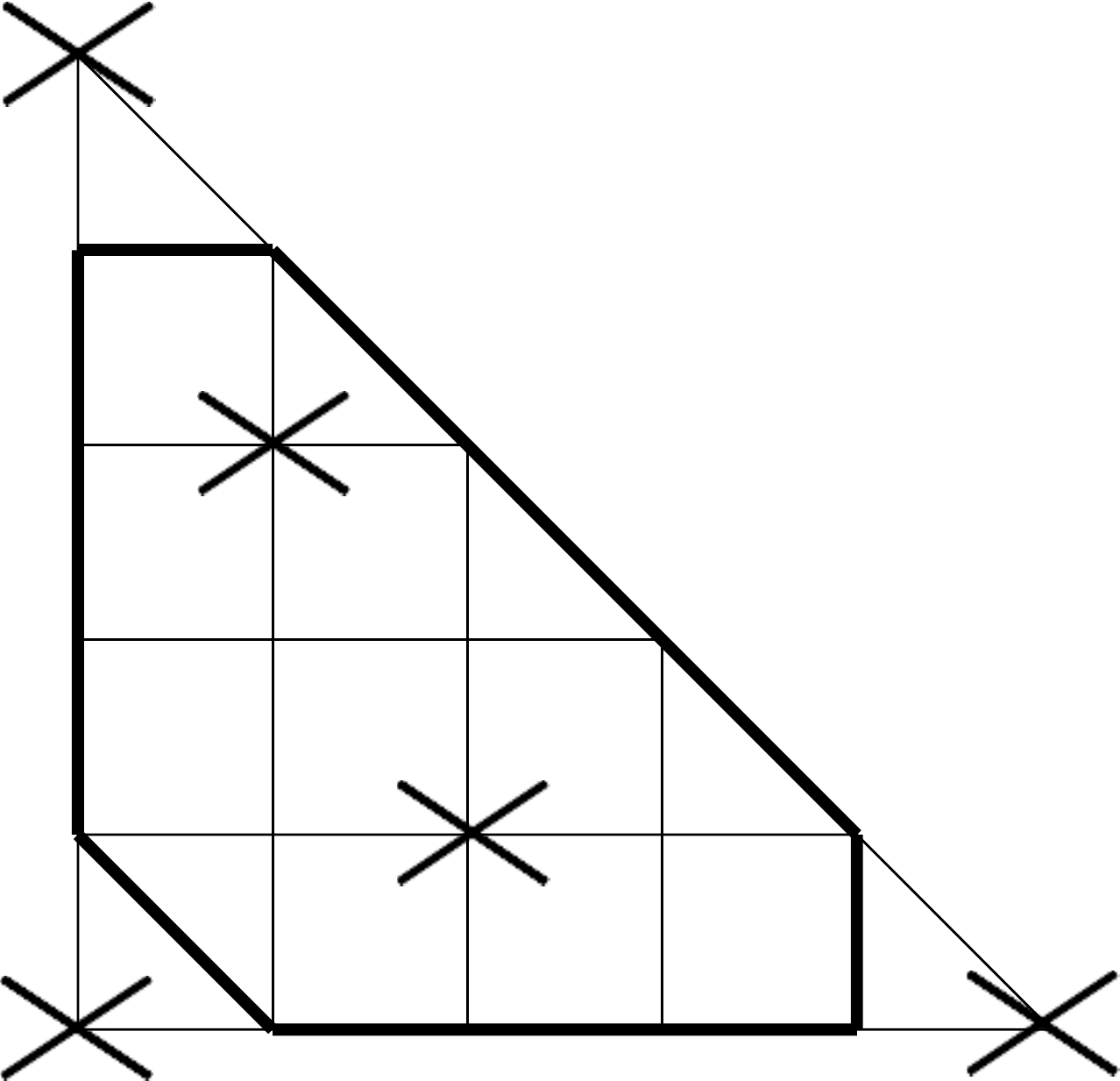}}
    \caption{Smooth non--trivial Togliatti system with $n=2$ and $d=5$}
    \end{figure}

\begin{rem} \rm
In the case of the non-trivial minimal smooth monomial Togliatti system
$$I=(x_0^5,x_1^5,x_2^5,x_0^3x_1x_2,x_0x_1^2x_2^2)$$ all $4$-osculating
spaces to $X$  have dimension lower than $14$, which is the expected dimension,
but the dimension of the previous osculating spaces is not constant. Some points of $X$ have $2$-osculating space or $3$-osculating space of dimension less than the general one (they are {\it flexes} of $X$).

This follows from  \cite{P}, where it is proved that the dimension of
the $s$-osculating space at a point  $x\in X$, corresponding to a vertex
$v_x$ of the polytope $P_I$, is maximal if and only if $(P_I\cap \mathbb
Z^2) \setminus A_I$ contains all points out to level $s-1$ with respect
to $v_x$. This means that, after translating $v_x$ to the origin and
using the first lattice points lying along the two edges of $P_I$
emanating from $v_x$ as basis for the lattice, $(P_I\cap \mathbb Z^2)
\setminus A_I$ contains all points $(a,b)$ with $a+b\leq s-1$.  This
remark explains why this example is not included in the list of Perkinson
\cite{P}, Theorem 3.2.

To better understand its geometry, let us note that the surface $X$ is the
projection, from a line $L$, of the blowing up of $\PP^2$ at three general points $E_0$, $E_1$, $E_2$,
embedded in $\PP^{17}$ by the linear system of the quintics through them. The line $L$
is chosen so to meet  all $4$-osculating spaces of this surface. We observe that  there are three lines of this type, obtained by interchanging the variables.
Every such line meets also the $3$-osculating space at one of the three points $E_i$, and the $2$-osculating spaces at the other two. This gives rise to the flexes. Any curve on $X$ corresponding to a general line through one of the blown up points is a smooth rational quartic. One can check that the flexes result to be singular points of intersection of two irreducible components of some reducible quartics obtained after the projection from $L$.
It would be nice to have a precise geometric description of the inflectional loci of $X$, but this goes beyond the scope of this article, we plan to return on this topic in a forthcoming paper.
\end{rem}

\begin{rem} \rm The hypersurface $F_{d-1}$ of degree $d-1$ that contains
the integral points $A_I$ of a minimal
monomial  Togliatti system $$I=(x_0^d,x_1^d,\cdots ,x_n^d,m_1,\cdots
,m_n)\subset k[x_0,\cdots ,x_n]$$  with $\mu(I)=2n+1$ can be described.
It turns out that if $I$ is trivial  then $F_{d-1}$ is the union of
$d-1$ hyperplanes.

If $n=2$, $d=4$ and  $I=(x_0^4, x_1^4, x_2^4, x_0x_1x_2^2, x_0^2x_1^2)$,
then $F_3=(x_0+x_1-3x_2)(3x_0^2-10x_0x_1+3x_1^2-4x_0x_2-4x_1x_2+x_2^2)$.
In this example the surface $X\subset  \PP^{9}$ is the closure of the
image of the parametrization $\phi=\phi_{(I^{-1})_4}$ defined by the
monomials of degree $4$ not in $I$, i.e.
$$(x_0^3x_1, x_0^3x_2, x_0^2x_1x_2, x_0^2x_2^2,  x_0x_1^3, x_0x_1^2x_2,
x_0x_2^3, x_1^3x_2, x_1^2x_2^2,  x_1x_2^3).$$
One computes that its partial derivatives of order $3$ satisfy the
Laplace equation
$$(x_0\phi_{x_0}+x_1\phi_{x_1}-x_2\phi_{x_2})(x_0^2\phi_{x_0^2}-2x_0x_1\phi_{x_0x_1}+x_1^2\phi_{x_1^2}+x_2^2\phi_{x_2^2})=0.$$

If $n=2$, $d=5$ and $I=(x_0^5,x_1^5,x_2^5,x_0^3x_1x_2,x_0x_1^2x_2^2)$
then
$F_4(x_0,x_1,x_2)=24(x_0^4+x_1^4+x_2^4)-154(x_0^3x_1+x_0x_1^3-x_0^3x_2+x_1^3x_2+x_0x_2^3+x_1x_2^3)+269(x_0^2x_1^2+x_0^2x_3^2+x_1^2x_2^2)+
 288(x_0^2x_1x_2+x_0x_1x_2^2)-337x_0x_1^2x_2$ which is irreducible.

Similarly, the Laplace equation satisfied by  the parametrization of the
surface $X\subset \PP^{15}$ is
$$
x_0^4\phi_{x_0^4}+x_1^4\phi_{x_1^4}+x_2^4\phi_{x_2^4}-x_0^3x_1\phi_{x_0^3x_1}-x_0^3x_2\phi_{x_0^3x_2}-x_0x_1^3\phi_{x_0x_1^3}-x_0x_2^3\phi_{x_0x_2^3}-x_1^3x_2\phi_{x_1^3x_2}-x_1x_2^3\phi_{x_1x_2^3}+
$$
$$ + x_0^2x_1^2\phi_{x_0^2x_1^2}+x_0^2x_2^2\phi_{x_0^2x_2^2}+x_1^2x_2^2\phi_
{x_1^2x_2^2}-3x_0^2x_1x_2\phi_{x_0^2x_1x_2}+2x_0x_1^2x_2\phi_
{x_0x_1^2x_2}+2x_0x_1x_2^2\phi_{x_0x_1x_2^2}=0.
$$

\end{rem}

\begin{cor} Fix integers $d\ge 4$ and $n\ge 2$. Let $I=(F_1,\cdots
,F_r)\subset k[x_0,\cdots , x_n]$ be a monomial artinian ideal  of forms
of degree $d$. If  $r\le 2n$ then, for any $s\le d-1$, the
$s$-osculating space to $X$ at a general point $x\in X$  has the
expected dimension, namely ${n+s\choose s}-1$.
\end{cor}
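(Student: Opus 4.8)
The plan is to deduce the corollary from Theorem~\ref{mainthm1}: having at most $2n<2n+1=\mu(n,d)$ generators forces $I$ not to be a Togliatti system, and this in turn forces every osculating space of $X$ of order $\le d-1$ to have the expected dimension (equivalently, $X$ satisfies no Laplace equation of order $\le d-1$). Since the statement depends only on the ideal $I$, I may assume $F_1,\dots,F_r$ are the minimal monomial generators of $I$, so that $r=\mu(I)\le 2n$. The first task is to show $I$ is not a Togliatti system: if it were, then removing generators one at a time as in Definition~\ref{togliattisystem}(iv) we would reach a \emph{minimal} monomial Togliatti system with at most $2n$ generators, contradicting $\mu(n,d)=2n+1$ (here one uses $d\ge4$). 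Moreover $r\le 2n\le\binom{n+d-1}{n-1}$ for all $n\ge2$, $d\ge4$, so Theorem~\ref{teathm} applies to $I$; the negation of its condition~(2) means that $F_1,\dots,F_r$ remain $k$-linearly independent on a general hyperplane $H=\{\ell=0\}$ of $\PP^n$. Setting $V:=\langle F_1,\dots,F_r\rangle\subseteq R_d$, this says exactly that, for a general linear form $\ell$,
\[
V\cap\ell R_{d-1}=0\qquad\text{inside }R_d .
\]

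Next I would pass from order $d-1$ to every order $s\le d-1$. For such $s$ one has $d-s\ge1$, hence $\ell^{d-s}R_s\subseteq\ell R_{d-1}$, and therefore $V\cap\ell^{d-s}R_s=0$ for general $\ell$. Now recall that $X=X_{n,(I^{-1})_d}$ is the projection of the Veronese variety $V(n,d)$ from the linear centre $V$, that the $s$-th osculating space of $V(n,d)$ at a general point has the expected dimension $\binom{n+s}{s}-1$ (classically: it is spanned by the forms $\ell^{d-s}g$ of degree $d$ with $g\in R_s$), and that this osculating space meets the centre of the projection if and only if $V\cap\ell^{d-s}R_s\ne0$ for general $\ell$ --- the case $s=d-1$ being precisely the equivalence of conditions (2) and (3) in Theorem~\ref{teathm}, and the general case going through by the identical argument of \cite[Theorem 3.2]{MMO}. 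As we have just seen the intersection vanishes, the centre is disjoint from the $s$-th osculating space of $V(n,d)$ at the general point; since the projection is linear, it maps that osculating space isomorphically onto the $s$-th osculating space of $X$ at the corresponding general point $x$. Hence $\dim T_x^{(s)}X=\binom{n+s}{s}-1$, which is the assertion.

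The only genuinely delicate point is showing that $I$ fails to be a Togliatti system, since $I$ need not be one of the extremal ideals classified in Theorem~\ref{mainthm1}: this is where one must use both the bound $\mu(n,d)=2n+1$ and the minimization step of Definition~\ref{togliattisystem}(iv), and also verify the inequality $r\le\binom{n+d-1}{n-1}$ needed to invoke Theorem~\ref{teathm} --- both of which hold automatically under the hypotheses $n\ge2$ and $d\ge4$. Everything after that is the standard dictionary of \cite{MMO} between linear (in)dependence on a general hyperplane, the weak Lefschetz property, and the dimensions of osculating spaces of projections of Veronese varieties, combined with the elementary inclusion $\ell^{d-s}R_s\subseteq\ell R_{d-1}$.
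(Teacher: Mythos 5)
Your proof is correct and follows exactly the route the paper intends for this corollary (which it leaves as an immediate consequence of Theorem \ref{mainthm1}): the bound $\mu(n,d)=2n+1$ rules out $I$ being a Togliatti system, and the apolarity dictionary of Theorem \ref{teathm} / \cite{MMO} then gives the statement on osculating spaces. Your reduction from arbitrary $s\le d-1$ to the case $s=d-1$ via the inclusion $\ell^{d-s}R_s\subseteq \ell R_{d-1}$ (equivalently, the nesting of the osculating spaces of $V(n,d)$, so a center missing the $(d-1)$-osculating space misses all lower ones) is precisely the point that needs saying, and you say it correctly.
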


\vskip4mm
In next Theorem we will classify  all smooth minimal  monomial Togliatti
systems $I\in {\mathcal T}^s(n,d)$ whose minimal number of generators
exceed by one the possible minimum. We start with a lemma.

\begin{lem} \label{L_0}
Let $I=(x_0^d,x_1^d,\cdots, x_n^d, m_1, \cdots, m_h)\subset
k[x_0,\cdots, x_n] $ with $h\geq n$, $m_i=x_0^{a_0^i}\cdots
x_n^{a_n^i}$ for $i=1,\cdots,h$,  be a
minimal  Togliatti system of forms of degree $d\ge 3$. Assume
$a_0^1\geq a_0^2\geq \cdots \geq a_0^h$. If
$a_0^{h-n+2}>0$, then $a_0^i>0$ for all index $i$.
\end{lem}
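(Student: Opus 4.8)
The plan is to argue by contradiction, collapsing the iterated $x_0$-slicing of the proof of Theorem~\ref{mainthm1} into a single slice; the extra ingredient that makes this possible is the \emph{minimality} of $I$. First, the case $n=2$ is trivial: there $h-n+2=h$, so the hypothesis $a_0^h>0$ together with $a_0^1\ge\cdots\ge a_0^h$ is exactly the conclusion. So assume $n\ge 3$ and suppose, for a contradiction, that $a_0^h=0$. Set $s:=\#\{\,i\mid a_0^i=0\,\}$. Then $s\ge 1$, and since the $m_i$ are ordered by non-increasing $x_0$-degree the hypothesis $a_0^{h-n+2}>0$ forces $a_0^i>0$ for all $i\le h-n+2$, hence $s\le n-2$. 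Thus $1\le s\le n-2$.

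Next I would use Proposition~\ref{failureWLP}: since $I$ is a Togliatti system there is a hypersurface $F$ of degree $d-1$ containing $A_I$, and since $I$ is minimal, $F$ meets $d\Delta_n\setminus A_I$ at most in vertices of $d\Delta_n$. The key observation is that the $s\ge1$ monomials $m_i$ with $a_0^i=0$ are not pure powers $x_j^d$ (otherwise they would be superfluous generators), so they are \emph{non-vertex} points of $d\Delta_n\setminus A_I$ and hence $F$ vanishes at none of them; as these points lie on the hyperplane $\{x_0=0\}$, it follows that $x_0\nmid F$. Equivalently, $\bar F:=F(0,x_1,\dots,x_n)$ is a \emph{nonzero} form of degree $d-1$ in $k[x_1,\dots,x_n]$.

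To finish I would examine the slice $A_I^0:=A_I\cap\{x_0=0\}$, namely the set of degree-$d$ monomials of $k[x_1,\dots,x_n]$ not in $I$: it is $d\Delta_{n-1}$ with its $n$ vertices $x_1^d,\dots,x_n^d$ deleted and the $s\le n-2$ non-vertex points $\{\,m_i\mid a_0^i=0\,\}$ deleted. The form $\bar F$ vanishes on $A_I^0\subset A_I$. But the inductive argument inside the proof of Theorem~\ref{mainthm1} shows precisely that no hypersurface of degree $d-1$ in the $n$ variables $x_1,\dots,x_n$ contains $d\Delta_{n-1}$ with its $n$ vertices and at most $n-2$ further points removed --- equivalently, the artinian monomial ideal $J_0:=(x_1^d,\dots,x_n^d)+(\,m_i\mid a_0^i=0\,)\subset k[x_1,\dots,x_n]$, having only $n+s\le 2n-2<\mu(n-1,d)=2n-1$ generators, cannot be a Togliatti system, so by Proposition~\ref{failureWLP} no hypersurface of degree $d-1$ contains its set of degree-$d$ inverse-system monomials $A_{J_0}=A_I^0$. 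This contradicts the existence of $\bar F$, and the contradiction forces $a_0^h>0$; by monotonicity $a_0^i>0$ for all $i$. (The input from Theorem~\ref{mainthm1} requires $d\ge 4$; for $d=3$ one argues in exactly the same way, quoting instead the classification of cubic Togliatti systems recalled above.)

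The point I would stress is that minimality, not the mere Togliatti property, is what does the work here: it is minimality --- via Proposition~\ref{failureWLP} --- that keeps $F$ off the monomials $m_i$, hence off $\{x_0=0\}$ once some $a_0^i=0$, and this one observation replaces the whole $x_0$-slicing machinery of Theorem~\ref{mainthm1} by its first step alone. The remaining care is bookkeeping already indicated: that the deleted monomials are genuinely non-vertices of $d\Delta_n$, and that deleting fewer than $n-2$ points from $d\Delta_{n-1}$ only makes containment in a degree-$(d-1)$ hypersurface harder, so that the $(n-1)$-variable case of Theorem~\ref{mainthm1} applies directly.
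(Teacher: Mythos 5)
Your proposal is correct and is essentially the paper's own argument run in contrapositive form: the paper first shows $F_{d-1}(0,x_1,\dots,x_n)=0$ by applying Theorem \ref{mainthm1} to the slice $A_I^0$ (which, exactly as you compute, is $d\Delta_{n-1}$ minus its $n$ vertices and at most $n-2$ further points) and then uses minimality via Proposition \ref{failureWLP} to conclude that the hyperplane $x_0=0$ contains no non-vertex point of $d\Delta_n\setminus A_I$, whereas you assume some $a_0^i=0$, use the same minimality statement to force the restriction to be nonzero, and reach the same contradiction on the slice -- identical ingredients, merely reordered. The one caveat, that for $d=3$ the appeal is to the cubic classification and in the borderline slice with three variables one has $\mu(2,3)=4=n+s$, so the counting alone does not immediately conclude, is present to exactly the same degree in the paper's proof, which cites Theorem \ref{mainthm1} although it is stated for $d\ge 4$.
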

\begin{proof}
Since $I$ is a Togliatti system, there exists a form $F_{d-1}$ of degree
$d-1$  in $x_0, \cdots , x_n$ passing through all points of $A_I$. Its
restriction to 
$H_0$,  $F_{d-1}(0,x_1, \cdots, x_n)$, vanishes at all
points of $A_I^0$. By assumption, to get $A_I^0$ we have to remove from
the simplex $d\Delta_{n-1}$  the $n$ vertices and at most $n-2$ other
points. We denote by $I'\subset K[x_1, \cdots,x_n]$ the ideal generated
by $x_1^d, \cdots, x_n^d$ and the monomials not containing $x_0$ among
$m_1,\cdots,m_h$.
If  $F_{d-1}(0,x_1,
\cdots, x_n)\neq 0$, $I'$ is a Togliatti system in $n$ variables with
$\mu(I')\leq 2n-2$,  which contradicts Theorem  \ref{mainthm1}. Hence
$F_{d-1}(0,x_1, \cdots, x_n)=0$ and $F_{d-1}=L_0F_{d-2}$. But $I$ is
minimal, so by Proposition \ref{failureWLP}  $L_0$ does not contain any
point of $d\Delta_n\setminus A_I$ except the vertices, which implies
that $a_0^i>0$ for any index $i$.
\end{proof}

\begin{rem}\label{trivialB} \rm
Recall that, when $I$ is a monomial Togliatti system, the projective variety $X$ defined by the apolar linear system of forms of degree $d$
has  all $(d-1)$-osculating spaces of  dimension strictly
less than expected, i.e. $X$ satisfies a Laplace equation of order $d-1$. Since
the $(d-1)$-osculating spaces of $V(n,d)$ have the expected dimension, this means that the space that $I$ determines
meets the $(d-1)$-osculating space $\TC_x^{(d-1)}V(n,d)$ for all $x\in V(n,d)$.
As pointed out in \cite{MMO}, \S 4,  when $I$ is as in Lemma \ref{L_0}, i.e. all monomials in $I$
except $x_0^d, \cdots,x_n^d$ are multiple
of one variable,
there is a point $p\in V(n,d)$ such that
the intersection of $I$ with the $(d-1)$-osculating space at  $p$ meets
all the other $(d-1)$-osculating spaces. These Togliatti systems   were called in \cite{MMO} trivial of type B.

For instance, if $t= {n+d-2 \choose
n-1}$  and $F_1, \cdots,F_t$ are any
general monomials of degree $d-1$, the ideal
$$I=(x_0^d,\cdots,x_n^d, x_0(F_1,\cdots,F_t))$$
is a minimal Togliatti system of  the type just described.
\end{rem}

\begin{thm}\label{mainthm2}
 Let $I\subset k[x_0,x_1,\cdots ,x_n]$ be a smooth minimal  monomial
Togliatti system of forms of degree $d\ge 4$.
 Assume that $\mu(I)=2n+2$. Then  $I$ is trivial unless $n=2$ and, up to
a permutation of the coordinates, one of the following cases holds:
 \begin{itemize}
 \item[(i)] $d=5$ and
$I=(x_0^5,x_1^5,x_2^5,x_0^3x_1x_2,x_0^2x_1^2x_2,x_0x_1^3x_2)$ or
$I=(x_0^5,x_1^5,x_2^5,x_0^3x_1x_2,x_0x_1^3x_2,x_0x_1x_2^3)$ or
$I=(x_0^5,x_1^5,x_2^5,x_0^2x_1^2x_2,x_0^2x_1x_2^2,x_0x_1^2x_2^2)$.
  \item[(ii)] $d=7$ and
$I=(x_0^7,x_1^7,x_2^7,x_0^3x_1^3x_2,x_0^3x_1x_2^3,x_0x_1^3x_2^3)$ or
$I=(x_0^7,x_1^7,x_2^7,x_0^5x_1x_2,x_0x_1^5x_2,x_0x_1x_2^5)$ or
$I=(x_0^7,x_1^7,x_2^7,x_0x_1x_2^5,x_0^3x_1^3x_2,x_0^2x_1^2x_2^3)$.
 \end{itemize}
\end{thm}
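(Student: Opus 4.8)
The plan is to reduce to the non-trivial case and then extract everything from the combinatorial description of a minimal monomial Togliatti system. So assume $I=(x_0^d,\dots,x_n^d,m_1,\dots,m_{n+1})$ is smooth, minimal and \emph{not} trivial, each $m_i$ a monomial of degree $d$ in at least two variables. Fix, by Proposition~\ref{failureWLP}, a form $F=F_{d-1}$ of degree $d-1$ through all points of $A_I\subset\ZZ^{n+1}$; minimality forces $F$ to avoid every integral point of $d\Delta_n\setminus A_I$ apart from the vertices $de_0,\dots,de_n$. The whole argument is a more delicate version of the slicing used in the proof of Theorem~\ref{mainthm1}, now with one extra generator, so a slice of $A_I$ can be ``short'' in up to two positions.

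The first step is to show that in the non-trivial case every $m_i$ lies in the interior of $d\Delta_n$, i.e. uses all $n+1$ variables. Put $c_j:=\#\{i:x_j\mid m_i\}$; since each $m_i$ uses $\ge 2$ variables, $\sum_jc_j\ge 2(n+1)$, and if $c_j\ge 3$ then Lemma~\ref{L_0} upgrades this to $c_j=n+1$. So, up to renaming the variables, either (A) some $x_0$ divides every $m_i$, or (B) each $m_i$ involves exactly two variables and each variable occurs in exactly two of them, the ``support graph'' on $\{0,\dots,n\}$ being $2$-regular. In case (B): for $n\ge 3$ the restriction of $I$ to a coordinate hyperplane is a system in $\ge 3$ variables whose trace ideal has $<2(n-1)+1$ generators, hence (Theorem~\ref{mainthm1}, Proposition~\ref{failureWLP}) is not a Togliatti system, so $F|_{\{x_j=0\}}=0$ for all $j$, $F$ vanishes on all of $\partial(d\Delta_n)$, and in particular on the non-vertex edge point attached to each $m_i$, against minimality; for $n=2$ the support graph is a triangle, $I=(x_0^d,x_1^d,x_2^d,x_0^ax_1^{d-a},x_1^bx_2^{d-b},x_0^cx_2^{d-c})$, and the $x_0$-slice peeling of Theorem~\ref{mainthm1} forces the outer slices empty, so $I$ is non-minimal, non-smooth, or trivial --- all excluded. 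In case (A) with some $m_i$ on the boundary, the same $x_0$-slice analysis (now peeling the slice-lines $L_0,\dots,L_{d-2}$ off $F$) again forces $I$ trivial. Hence every $m_i$ is interior: $x_0x_1\cdots x_n\mid m_i$, so $d\ge n+2$ and $m_i=x_0\cdots x_n\,n_i$ with $\deg n_i=d-n-1\ge 1$.

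The two regimes now diverge, and this is the heart of the matter. For $n\ge 3$: restricting $F$ to any $\{x_j=0\}$ would make it vanish on $d\Delta_{n-1}$ minus its $n$ vertices, impossible for a nonzero form of degree $d-1$ since the complete intersection $(x_i^d:i\ne j)$ in $\ge 3$ variables has the WLP (hence is not a Togliatti system, by Proposition~\ref{failureWLP}); thus $x_j\mid F$ for all $j$, i.e. $F=x_0x_1\cdots x_n\,H$ with $\deg H=d-n-2$, and $H$ must vanish on the $\binom{d-1}{n}-(n+1)$ interior points of $A_I$ while missing the $n+1$ points $m_i$. Comparing with $\dim$ of forms of degree $d-n-2$ forces $d-n-1$ to be very small, leaving finitely many configurations; each of them is either trivial, by exhibiting a good monomial, or fails the smoothness criterion of Proposition~\ref{smoothness}. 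So no non-trivial example survives for $n\ge 3$.

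For $n=2$ the restriction trick fails (the boundary pieces sit on $\PP^1$, where the Togliatti bound is vacuous), so $F$ need not be divisible by the coordinate forms and one must argue directly: $F$ is a plane curve of degree $d-1$ through the $\binom{d+2}{2}-6$ points of $A_I$, which contains every non-vertex point of $\partial(d\Delta_2)$. A dimension count, refined using that these boundary points are very special, forces $d$ small --- with $d=4$ giving $n_i=x_i$ and $I$ trivial (good monomial $x_0x_1x_2$), and only $d=5$ and $d=7$ leaving finitely many admissible triples $(n_1,n_2,n_3)$. Enumerating them up to a coordinate permutation and retaining those for which $F$ misses the three points $m_i$ (minimality) and the polygon $P_I$ satisfies the edge/vertex lattice conditions of Proposition~\ref{smoothness} (smoothness) leaves exactly the three families in (i) and the three in (ii), each then verified directly to be minimal, smooth and non-trivial. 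I expect the main obstacle throughout to be the bookkeeping: keeping the list of forbidden points exact through the slice-peeling in step two, and being exhaustive in the finite checks for $n\ge 3$ and $n=2$ so that no sporadic smooth non-trivial example slips through.
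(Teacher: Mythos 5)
Your reduction steps are appealing, but the proposal has two genuine gaps, one of them fatal to the stated strategy. The ``heart'' of your argument is the claim that, after writing $F=x_0x_1\cdots x_n\,H$ with $\deg H=d-n-2$ (for $n\ge 3$), a dimension count comparing forms of degree $d-n-2$ with the interior points to be interpolated ``forces $d-n-1$ to be very small, leaving finitely many configurations'', and similarly that for $n=2$ a refined count ``forces $d$ small''. This cannot work: setting $e=d-n-1$, the form $H=\prod_{i=0}^{e-2}\bigl(e\,x_0-i(x_0+\cdots+x_n)\bigr)$ has degree $e-1$ and, on the lattice points of $e\Delta_n$ (where $x_0+\cdots+x_n=e$), vanishes exactly when $a_0\le e-2$, i.e.\ it misses precisely the $n+1$ points $x_0^e,\,x_0^{e-1}x_j$. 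So the interpolation problem you arrive at has solutions for \emph{every} $e$ (they correspond, after multiplying back by $x_0\cdots x_n$, exactly to the trivial systems $(x_0,\dots,x_n)\cdot x_0^{e}x_1\cdots x_n$); likewise for $n=2$ the trivial systems $(x_0,x_1,x_2)m$ with $m$ interior exist for every $d\ge4$. These lattice points are far from imposing independent conditions, and no counting argument can bound the degree or separate trivial from non-trivial solutions; classifying the admissible $H$ is essentially the original problem, which is left unproved. The paper's mechanism here is different and substantive: once a variable divides all the $m_i$, restrict to the hyperplane $x_n=x_0+\cdots+x_{n-1}$ and divide the $m_i$ by that variable to obtain a Togliatti system of degree $d-1$ with the same properties, then induct on the degree down to $d=4$ (using Lemma \ref{L_0}), while for $n=2$ the $d\in\{5,7\}$ exceptions come out of an explicit case analysis on the exponent patterns, not a count.

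Your preliminary step (``every $m_i$ is interior in the non-trivial case'') also contains two concrete errors. In case (A) with a boundary monomial, the slice peeling does \emph{not} force $I$ trivial: $I=(x_0^d,x_1^d,x_2^d)+x_0^{d-2}(x_1^2,x_1x_2,x_2^2)$ is a non-trivial minimal Togliatti system (take $F_{d-1}$ to be the product of the lines $a_0=i$, $i\in\{0,\dots,d-3,d-1\}$) in which $x_0$ divides all the $m_i$ and two of them lie on the boundary; it is eliminated only by the smoothness criterion of Proposition \ref{smoothness} (this is exactly Case 1.1 of the paper's proof), which your argument does not invoke at that point. In case (B), the trace on a facet consists of $n$ pure powers plus $n-1$ monomials, i.e.\ exactly $2(n-1)+1$ generators, not fewer, so ``hence not a Togliatti system'' does not follow from Theorem \ref{mainthm1}; you must in addition rule out the trace being trivial of the form of Remark \ref{triv}, which is a real possibility when $n=3$ (the two facet monomials sharing a $(d-1)$st power of a common variable) and requires the compatibility argument across the different facets. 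These last points are repairable, but together with the failed dimension count the proposal does not yet yield the classification, in particular it never produces the lists in (i) and (ii).
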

\begin{proof} Let us first assume that $n=2$ and let
$I=(x_0^d,x_1^d,x_2^d, m_1, m_2, m_3)\subset k[x_0,x_1,x_2] $ with
$m_i=x_0^{a_0^i}x_1^{a_1^i} x_2^{a_2^i}$ and $\sum _{j=0}^2a_j^i =d$ be
a minimal smooth Togliatti system. We distinguish several cases:

\noindent \underline{Case 1.}
We assume that there is $0\le j \le 2$ such that  $a_j^1,a_j^2,a_j^3\ge
2$. Wlog we can assume $j=0$ and $a_0^1\ge a_0^2\ge a_0^3\ge 2$. Let
$F_{d-1}$ be a plane curve containing all  points of $A_I$. Since
$F_{d-1}$ contains the $d$ points of $A_I^1$ and the $d-1$ points of
$A_I^0$ it factorizes as $F_{d-1}=L_0L_1F_{d-3}$.

Let $2\leq i<d$, then $H_i$ contains $d-i+1$ integral points of
$d\Delta_2$; to get $A_I^i$ we have to remove three points, the first
one from $H_{a_0^3}$, the second one from $H_{a_0^2}$, and the third one
from $H_{a_0^1}$.
First of all we want to exclude that  $a_0^3<a_0^2$. Otherwise
$F_{d-3}$ has as factors $L_2, \cdots,L_{a_0^3}$; but in view of
minimality $H_{a_0^3}$ must be contained in $A_I$, which gives a
contradiction. Therefore $a_0^3=a_0^2$ and there are two subcases to
analyze separately:

\begin{itemize}

\item[(1.1)] $a_0^1=a_0^2=a_0^3:=s\ge 2$. In this case $F_{d-1}$
factorizes as  $F_{d-1}=L_0\cdots L_{s-1}L_{s+1}\cdots L_{d-1}$ and the
plane curve $F_{d-1}$ contains all points of $A_I$ if and only if
$s=d-2$. But in this case $m_1=x_0^{d-2}x_1^2$, $m_2=x_0^{d-2}x_1x_2$,
$m_3=x_0^{d-2}x_2^2$, and  applying Proposition \ref{smoothness} we
deduce that $I=(x_0^d,x_1^d,x_2^d, m_1, m_2, m_3)$ is not a smooth
Togliatti system since it violates condition (ii) of Proposition \ref{smoothness}.

 \item[(1.2)] $u:=a_0^1>a_0^2=a_0^3:=s\ge 2$. In this case
$F_{d-1}=L_0L_1F_{d-3}$ and $F_{d-3}$ contains all integral points in
$\cup _{\ell =2}^{d-1} A_I^{\ell }$
if and only if $u=s+1$ and
$(m_1,m_2,m_3)=x_0^sx_1^ax_2^{d-1-a-s}(x_0,x_1,x_2)$  for a suitable
$a\geq 0$. Therefore, $I$ is a trivial smooth Togliatti system.

\end{itemize}

\noindent \underline{Case 2.} We assume $a_j^i\ge 1$ for all $i,j$ and
that for all $0\le j\le 2$  there exists $1\le i_j\le 3$ such that
$a_j^{i_j}= 1$.
We distinguish  4 subcases and a straightforward computation allows us
to conclude:
\begin{itemize}
\item[(2.1)]
$(x_0^d,x_1^d,x_2^d,x_0^{d-2}x_1x_2,x_0x_1^{d-2}x_2,x_0x_1x_2^{d-2})$ is
a smooth minimal Togliatti system if and only if $d=5$ or 7.

\item[(2.2)]
$(x_0^d,x_1^d,x_2^d,x_0^{d-2}x_1x_2,x_0x_1^{d-2}x_2,x_0^ax_1^bx_2^{c})$
with $(a,b,c)\ne (1,1,d-2)$ is a smooth minimal Togliatti system if and
only if $d=5$ and $(a,b,c)=(2,2,1)$.

\item[(2.3)]
$(x_0^d,x_1^d,x_2^d,x_0x_1x_2^{d-2},x_0^ax_1^{b}x_2,x_0^ex_1^fx_2^{g})$
with $a,b\ge 2$ and $(e,f,g)\ne (d-2,1,1),(1,d-2,1),(1,1,d-2)$  is a
smooth minimal Togliatti system if and only if $d=7$, $(a,b)=(3,3)$ and
$(e,f,g)=(2,2,3)$.

 \item[(2.4)]
$(x_0^d,x_1^d,x_2^d,x_0x_1^ax_2^b,x_0^cx_1x_2^e,x_0^fx_1^gx_2)$ is a
smooth minimal Togliatti system if and only if $d=5$ and $a=b=c=e=f=g=2$
or $d=7$ and $a=b=c=e=f=g=3$.
\end{itemize}

\noindent \underline{Case 3.}  We assume that there exists
$a_{j_0}^{i_0}=0$, $a_j^i\ge 1$ for all $(i,j)\ne (i_0,j_0)$  and that
for all $0\le j\le 2$  
 there exists $1\le i_j\le 3$ such that
$a_j^{i_j}= 1$.  The smoothness criterion (Proposition
\ref{smoothness}) implies that, up to permutation of the coordinates, we
have $m_1=x_1^{d-1}x_2$ and we can assume $m_2=x_0^ax_1x_2^b$ and
$m_3=x_0^ux_1^vx_2^w$  with $a,b,u,v,w\ge 1$ and $I$ is never a  smooth
minimal Togliatti system.

\noindent \underline{Case 4.}  We assume that there exists
$a_{j_0}^{i_0}=a_{j_1}^{i_1}=0$,  and that for all $0\le j\le 2$  there
exists $1\le i\le 3$ such that $a_j^{i}\le 1$.  The smoothness criterion
(Proposition \ref{smoothness}) implies that, up to permutation of the
coordinates, we have $m_1=x_1^{d-1}x_2$, $m_2=x_0^{d-1}x_2$ and
$m_3=x_0^ax_1^bx_2^c$ which does not correspond to a smooth minimal
Togliatti system.

\vskip 2mm
 Let us now assume that $n\geq 3$. We want to prove  that  all minimal smooth
monomial Togliatti systems $I\subset k[x_0,\cdots ,x_n]$ of forms of
degree $d\ge 4$ with $\mu(I)=2n+2$ are trivial. This time we distinguish two cases:

\noindent \underline{Case 1.}
For all $0\le j \le n$,  $\# \{i\mid a_j^i\ge 1\}\le 2$. This implies
that each variable $x_j$ appears explicitly in exactly two of the
monomials $m_1, \cdots, m_{n+1}$. Equivalently, looking at the simplex,
the $n+1$ integral points to remove from $d\Delta_n$ to get $A_I$ are
all on the exterior facets, and on each facet there are exactly $n-1$
points. We consider now the restriction of the hypersurface $F_{d-1}$ to
a facet, we apply Theorem \ref{mainthm1} and we get that the
corresponding $n-1$ monomials, together with the $d$th powers of the
corresponding variables, form a trivial Togliatti system in $n$
variables, of the form described in Remark \ref{triv}.  This gives a
contradiction, so this case is impossible.

\noindent \underline{Case 2.} There exists $0\le j \le n$ such that
 $\# \{i \mid a_j^i\ge 1\}\ge 3$. Wlog we can assume $a_0^1\ge a_0^2\ge
\cdots \ge a_0^{n+1} \ge 0$ and $a_0^3\ge 1$. Therefore, in view of
Lemma \ref{L_0}  $a_0^{n+1}>0$.

 This means that all monomials $m_1, \cdots, m_{n+1}$ contain $x_0$. We
consider the restrictions of $x_0^d, \cdots, x_n^d, m_1, \cdots,
m_{n+1}$ to the hyperplane $x_n=x_0+\cdots+x_{n-1}$, they are linearly
dependent by assumption. But in $(x_0+\cdots+x_{n-1})^d$ there is some
monomial not containing $x_0$, that cannot cancel with the others, so
its coefficient  in a null linear combination must be $0$, and by
consequence also the coefficients of $x_1^d, \cdots, x_{n-1}^d$  are
$0$. This implies that the monomials $m_1, \cdots, m_{n+1}$ divided by
$x_0$, together with $x_0^{d-1}, \cdots, x_n^{d-1}$ form again a
Togliatti system but of degree one less, with the same properties. So we can proceed by induction
on the degree, until we arrive to  $d=4$.
Now we have to prove that there is no hypersurface $F_3$ of degree $3$
containing all points of $A_I$ unless $I$ is a trivial monomial
Togliatti system. Since $a_0^{n+1}>0$, $F_3=L_0F_2$ and
$F_2$ contains all points of $A_I\setminus A_I^0$. This is possible  if
and only if $I$ is trivial of type $(x_0^d,\cdots ,x_n^d)+(x_0,\cdots
,x_n)m$ where $m$ is a monomial of degree $d-1$ involving at least $2$
variables.
\end{proof}

\begin{rem}\rm In Theorem \ref{mainthm2}, we did not use the smoothness
assumption in the cases with $n\geq 3$.
\end{rem}

To complete  the results of Theorems \ref{mainthm1} and \ref{mainthm2}, in next Proposition we give a criterion to distinguish the smooth ones among the trivial Togliatti systems. To have a complete picture we also include systems with number of generators bigger than $\rho(n,d)$.

\begin{prop} \label{triv smooth}
Let $I$ be a trivial Togliatti system of the form $(x_0, ... ,x_n)m+(x_0^d,...,x_n^d)$, where $m$ is a monomial. Then $I$ is smooth if and only if one of the following happens (up to permutation of the variables):
\begin{itemize}
\item[(i)] $d=2$ and $n=2$ or $n=3$;
\item[(ii)] $d=3$, $n=2$, $m=x_0^2$;
\item[(iii)] $d\geq 4$, $n=2$, $m=x_0^{d-1}$ or $m=x_0^{i_0}x_1^{i_1}x_2^{i_2}$ with $i_0\geq i_1\geq i_2>0$;
\item[(iv)] $d\geq 4$, $n\geq 3$, $m=x_0^{d-1}$ or $m=x_0^{i_0}x_1^{i_1}\cdots x_n^{i_n}$ with $i_0\geq i_1\geq \cdots \geq i_n\geq 0$ and $i_2>0$.
\end{itemize}
\end{prop}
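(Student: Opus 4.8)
The plan is to use the smoothness criterion of Proposition \ref{smoothness}, applied face by face to the polytope $P_I$ attached to a trivial Togliatti system $I=(x_0,\ldots,x_n)m+(x_0^d,\ldots,x_n^d)$. First I would describe $A_I$ explicitly: it consists of all lattice points of $d\Delta_n$ except the $n+1$ vertices $d e_i$ and except the $n+1$ points $e_i+v_m$, where $v_m\in(d-1)\Delta_n$ is the exponent vector of $m$, subject to the caveat that if $v_m$ has a zero coordinate then some of these $n+1$ "forbidden" points coincide with vertices and are not actually removed a second time. Since the vertices of $d\Delta_n$ are always removed, $P_I=d\Delta_n$ itself (for $n\ge 2$, $d\ge 2$ this convex hull is the full simplex, because all facet lattice points except vertices survive when $d\ge 3$; the boundary cases $d=2$ must be checked separately since then each facet has no interior lattice points). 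So the combinatorial faces $\Gamma$ of $P_I$ are exactly the faces of $d\Delta_n$: the $i$-dimensional faces correspond to subsets $S\subseteq\{0,\ldots,n\}$ of size $i+1$, and each such face is a scaled simplex $d\Delta_S$.

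Next I would carry out the face-by-face verification of conditions (i) and (ii) of Proposition \ref{smoothness}. Condition (ii) for a face $\Gamma=d\Delta_S$ says that every lattice point of $d\Delta_S$ must lie in $A_I$; equivalently, none of the removed points $e_i+v_m$ may lie in the relative interior of, or on a proper sub-face of, any facet — more precisely, a removed point lying on a face $\Gamma$ that is not a vertex of $P_I$ violates (ii). This immediately forces strong restrictions on $v_m$: a point $e_i+v_m$ lies on the face $d\Delta_S$ iff $\mathrm{supp}(e_i+v_m)\subseteq S$, and it is a vertex of that face only if $S=\{i\}$ and $v_m=(d-1)e_i$, i.e. $m=x_i^{d-1}$. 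So if $m\neq x_j^{d-1}$ for any $j$, every removed point $e_i+v_m$ of the form "$i\in\mathrm{supp}(v_m)$" creates a non-vertex boundary lattice point unless it happens to be an interior lattice point of $P_I$ — and here is where the hypotheses $i_0\ge i_1\ge\cdots$ with enough positive entries come in: one checks that $e_i+v_m$ is interior to $d\Delta_n$ exactly when $v_m$ has full support (all $i_j>0$, after adding $e_i$ the support is everything only if it was everything already, giving $i_j>0$ for all $j$ — but actually one needs $v_m$ to have support of size $\ge n$ so that $e_i+v_m$ has full support for the $i$ already in the support, and the at most one index outside the support gives the at most one extra generator). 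This is exactly the source of the conditions "$i_2>0$" in (iv) and "all $i_j>0$" in (iii): with $n+1$ variables, at most one of the $i_j$ can vanish, which translated into the ordered list $i_0\ge\cdots\ge i_n$ means $i_{n-1}>0$ when $n\ge 3$... — I would need to reconcile this with the stated "$i_2>0$", noting $n\ge 3$ and the symmetry, the point being that all but at most one exponent is positive, and for $n\ge 3$ that is captured by requiring the third-largest to be positive is NOT enough, so more likely the intended reading is $i_n$ may be $0$ but $i_{n-1}>0$; I will state it carefully and prove the version that is correct. Then condition (i) (the local smoothness at each vertex) I would verify by the explicit 2D-style description recalled after Proposition \ref{smoothness}: at a vertex $d e_i$ of $P_I=d\Delta_n$, the primitive edge vectors are $e_j-e_i$ ($j\neq i$), which always form a $\mathbb{Z}$-basis of the lattice $\mathrm{Aff}_{\mathbb{Z}}(\Delta_n)$; the subtlety is that $S_I/\Gamma$ must be $\mathbb{Z}_+^n$, which can fail if a removed point sits at level $\le$ some bound from the vertex, but since the removed points are all "deep inside" or at other vertices under our hypotheses, this holds — and for the low-degree/low-$n$ exceptional list (i), (ii) one does the finite check directly.

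The main obstacle I anticipate is the boundary between "$m$ has full support minus possibly one variable — smooth" and "$m$ misses two or more variables — not smooth": making precise, and proving in both directions, exactly which removed points $e_i+v_m$ land on which faces of $d\Delta_n$, and separating the genuinely-interior ones (harmless, they just reduce the generator count) from the ones sitting on a positive-dimensional non-vertex face (fatal for condition (ii)). A secondary nuisance is that the polytope $P_I$ is not literally $d\Delta_n$ when $d=2$ (facets have no non-vertex lattice points) and when $n=2,d=3$ — these are precisely the sporadic cases (i) and (ii) of the Proposition, so I would treat $d=2$ and $(n,d)=(2,3)$ by hand, invoking the earlier classification remarks, and then run the general face argument only for $d\ge 4$ (and $d=3$, $n\ge 3$, which I'd fold into the $d\ge 4$ argument or dispatch with the cubic classification already quoted). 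I also need to double-check the edge case $m=x_0^{d-1}$ separately in every dimension: there the removed points are $e_0+(d-1)e_0=d e_0$ (a vertex, not really removed) and $e_i+(d-1)e_0=(d-1)e_0+e_i$ for $i\ge 1$, which lie on the edges $[de_0,de_i]$ at lattice-distance $1$ from the vertex $de_0$ — these sit on $1$-dimensional faces, so one must check they do not violate (ii); in fact they do not, because removing the point adjacent to a vertex along an edge still leaves that edge's lattice structure compatible provided it is the only one removed on that edge — I'd verify this is consistent with the smoothness of the system $(x_0^d,\ldots,x_n^d)+x_0^{d-1}(x_1,\ldots,x_n)$ already asserted in the proof of Theorem \ref{mainthm1}, which gives exactly this case and is stated to be smooth for $d\ge 4$.
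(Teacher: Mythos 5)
Your overall strategy (apply Proposition \ref{smoothness} face by face to $P_I$, locate the removed points $e_i+v_m$ on the faces, and treat $d=2$, $(n,d)=(2,3)$ and $m=x_0^{d-1}$ separately) is the same as the paper's, but there are two genuine errors that would derail the argument. First, the claim that $P_I=d\Delta_n$ is false: the $n+1$ vertices $de_i$ are always removed, so $P_I$ is the simplex with its corners truncated (and further truncated near $de_0$ when $m=x_0^{d-1}$, since then the whole layer $a_0=d-1$ is removed). The vertices of $P_I$ are therefore points such as $(d-1)e_i+e_j$, not the $de_i$, and it is exactly at these new vertices and the small truncation facets that condition (i) must be checked --- this is how the paper rules out $d=2$, $n\ge 4$ (each vertex of $P_I$ lies on $2(n-2)>n-1$ edges) and $m=x_0^{d-2}x_1$. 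Running the face analysis on the faces of $d\Delta_n$ instead of those of $P_I$ misses precisely the degenerations that produce the exceptional low cases (i) and (ii).

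Second, and more seriously, you arrive at the wrong combinatorial criterion and announce that you would prove it instead of the stated one. Your reasoning rests on the premise that a removed point $e_i+v_m$ is harmless only if it is interior to $P_I$ (or a vertex), so that a removed point in the relative interior of any proper face ``violates (ii)''. That premise is false: condition (ii) only requires that $A_I\cap\Gamma$ still affinely generate the full lattice of $\Gamma$, and condition (i) concerns the structure transverse to $\Gamma$ and at the vertices; deleting a few points from the relative interior of a face of dimension $\ge 2$ (which for $d\ge 4$ still contains plenty of lattice points) affects neither. The fatal configurations are removed points on \emph{edges} of $P_I$ (e.g.\ $m=x_0^{d-i}x_1^{i-1}$, which deletes interior points of the edge $[de_0,de_1]$ and breaks the lattice condition there, as in the paper's $d=3$, $m=x_0x_1$ case where the surviving edge points generate an index-$2$ sublattice). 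Hence the correct dividing line is ``$m$ involves at least $3$ variables'' (so every $e_i+v_m$ has support of size $\ge 3$ and sits in the relative interior of a face of dimension $\ge 2$), i.e.\ exactly the condition $i_2>0$ of the statement. Your proposed alternative ``at most one exponent of $m$ vanishes'' would wrongly declare non-smooth, say, $n=4$, $d=4$, $m=x_0x_1x_2$, which the proposition (correctly) classifies as smooth. Since you explicitly say you would prove ``the version that is correct'' rather than the stated one, the proposal as written does not establish the proposition.
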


\begin{proof} If $d=2$, we may assume that $m=x_0$.  If $n=2$  then $X$ is a point. Hence, the system $I$ is smooth. Assume $n\geq 3$. After cutting the points of $I$ from $\Delta$ it remains $A_I=A_I^0$, which is the $(n-1)$-dimensional simplex minus the $n$ vertices.  Through  each vertex of the polytope $P_I$ there are $2(n-2)$ edges. Then  the system is singular unless $n=3$. Indeed by Proposition \ref{smoothness}, (i), for $X$ to be smooth the number of edges emanating from each vertex must be equal to $n-1$.

If $d=3$ then $m$ can be $ x_0^2$, or
$x_0x_1$.
If $n=2$, the first case is smooth, because $P_I$ is a trapezium, and the second one  is singular: indeed, we cut from $\Delta$ the whole edge $x_0^3 - x_1^3$. So an edge of $P_I$ is $x_0^2x_2 - x_1^2x_2$, but the central point $x_0x_1x_2$ does not belong to $A_I$. Therefore this edge gives a singularity.
If $n\geq 3$ both cases are singular: the first one because through the vertices of $P_I$ adjacent to $x_i^3$ there are more than $n$ edges, the second one because  $P_I$ contains the $1$-dimensional faces for $n=2.$

Now assume $d\geq 4$ and $n=2$. If $m=x_0^{d-1}$, then the system is clearly smooth. If $m
=x_0^{d-2}x_1$ then it is singular because the situation is as in  Figure 1.
If $m=x_0^{d-i}x_1^{i-1}$ with $i>2$, the system  is singular because in the edge $x_0^d - x_1^d$ of $P_I$ we have to cut two points in the middle.
Finally if $m=x_0^{i_0}x_1^{i_1}x_2^{i_2}$, with $i_0, i_1, i_2$ all strictly positive,
we get a  smooth system because the points of $I$ are all inner points in $P_I.$

If $d\geq 4$ and $n\geq 3$, then if $m$ is $x_0^{d-1}$, the system is smooth; if $m=x_0^{d-2}x_1$ or $m=x_0^{d-i}x_1^{i-1}$ with $i>2$   the system is singular, because $P_I$ has a $2$-dimensional face which is singular. Finally
if $m$ contains at least $3$ of the variables the system is smooth: indeed on the $1$-dimensional edges of $P_I$ there are no points of $I$, while on the faces of $ P_I$ of dimension at least $2$ the points of $I$ are in the interior.
\end{proof}

\section{Number of generators of a minimal Togliatti system}\label{number}

We consider now the range comprised between $\mu^s(n,d)$ and $\rho^s(n,d)$ (resp.
$\mu(n,d)$ and $\rho(n,d)$)  and ask if all values are reached.

Next Proposition gives a rather precise picture in the case $n=2$.
\begin{prop} \label{interval}  With  notation as in Section \ref{minimalnumbergenerators} we have:
\begin{itemize}
\item[(i)] For any $d\ge 4$, $\mu ^s(2,d)=\mu (2,d)=5$.
\item[(ii)] For any $d\ge 4$, $\rho ^s(2,d)=\rho (2,d)=d+1$.
\item[(iii)] For any $d\ge 4$ and any $5\le r \le d+1$, there exists
$I\in \mathcal{ T}^s(2,d)$ with $\mu (I)=r$.
\end{itemize}
\end{prop}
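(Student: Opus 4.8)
The plan is to prove the three parts essentially together, bootstrapping on Theorem \ref{mainthm1} and on the easy bounds already recorded. Part (i) is immediate from Theorem \ref{mainthm1}: for every $d\geq 4$ we have $\mu^s(2,d)=\mu(2,d)=2\cdot 2+1=5$. For part (ii), recall from the discussion following Theorem \ref{teathm} that in the case $n=2$ the bound $r\le \binom{n+d-1}{n-1}$ becomes $r\le d+1$, so by definition $\rho(2,d)\le d+1$ (and a fortiori $\rho^s(2,d)\le d+1$). Thus it suffices to exhibit, for each $d\geq 4$ and each $r$ with $5\le r\le d+1$, a minimal \emph{smooth} monomial Togliatti system $I\subset k[x_0,x_1,x_2]$ of forms of degree $d$ with exactly $r$ generators. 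Producing such a family simultaneously settles (ii) (take $r=d+1$) and (iii), and hence all of (i)--(iii).

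The construction I would use is to start from a trivial system of the form $I=(x_0^d,x_1^d,x_2^d)+x_0^{d-1}(x_1,\dots)$ and ``fatten'' it: concretely, fix $t$ with $1\le t\le d-1$ and set
\[
I_t=(x_0^d,\,x_1^d,\,x_2^d,\;x_0^{d-1}x_1,\;x_0^{d-2}x_1x_2,\;x_0^{d-3}x_1x_2^2,\;\dots,\;x_0^{d-t}x_1x_2^{t-1}),
\]
so that $\mu(I_t)=3+t$, giving all values $r=4,5,\dots,d+2$; one then selects the sub-range $5\le r\le d+1$, i.e. $2\le t\le d-2$ (the exact monomials may need adjusting — see below — but this is the idea: the extra generators are $x_0$ times $t$ suitably chosen monomials of degree $d-1$, so that $A_{I_t}$ still lies on a curve of degree $d-1$). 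That each $I_t$ is a Togliatti system follows from Proposition \ref{failureWLP} once one checks that $A_{I_t}$ lies on a degree $(d-1)$ hypersurface; since all the added generators are divisible by $x_0$, the natural candidate is $L_0\cdot(\text{curve of degree }d-2)$, or more robustly one invokes Remark \ref{trivialB}: a system $(x_0^d,\dots,x_n^d)+x_0(F_1,\dots,F_t)$ with the $F_i$ monomials of degree $d-1$ is automatically a Togliatti system. Minimality I would verify via the second half of Proposition \ref{failureWLP}: the degree $(d-1)$ hypersurface through $A_{I_t}$ must avoid every lattice point of $d\Delta_2\setminus A_{I_t}$ except possibly vertices — this forces a combinatorial condition on which degree-$(d-1)$ monomials one is allowed to delete, and it is here that one must choose the $F_i$ carefully (they should be ``spread out'' near the vertex $x_0^d$ so that the only degree $(d-1)$ curve through $A_{I_t}$ is the expected reducible one, and removing any one generator destroys the Togliatti property). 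Smoothness I would check with Proposition \ref{smoothness}, in the explicit two-dimensional form recalled after Figure 1: translate each vertex of $P_{I_t}$ to the origin, check the two first lattice points along the emanating edges form a $\ZZ$-basis (condition (i)), and check no lattice point on an edge of $P_{I_t}$ is missing from $A_{I_t}$ (condition (ii)). Because the deleted points all cluster around the single vertex $x_0^d$ and are interior to the relevant edge, these conditions hold.

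The main obstacle, I expect, is the simultaneous control of minimality and smoothness as $r$ varies: a ``lazy'' choice of the $t$ extra monomials (e.g. all divisible by $x_0x_1$) tends either to make the system non-minimal (several degree-$(d-1)$ curves pass through $A_{I_t}$, so proper subsets remain Togliatti) or to introduce an edge singularity in $P_{I_t}$ when $d$ is small relative to $t$. I would handle this by giving one clean explicit family covering the bulk of the range — for instance adding monomials of the shape $x_0^{d-1-j}x_1 x_2^{j}$ for $j=1,\dots,t-1$ together with $x_0^{d-1}x_1$, which keeps the deleted lattice points on a single ``staircase'' adjacent to the vertex $x_0^d$ and makes both Proposition \ref{failureWLP} and Proposition \ref{smoothness} transparent — and then treating the few boundary values of $r$ (namely $r=5$, already done by Theorem \ref{mainthm1}, and $r=d+1=\rho^s(2,d)$) by a separate explicit example if the uniform family does not reach them. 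Finally, collecting these: (i) is Theorem \ref{mainthm1}; (ii) follows since the family realizes $r=d+1$ and $d+1$ is the absolute upper bound; (iii) follows since the family realizes every intermediate $r$. \prfend
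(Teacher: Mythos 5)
Your handling of (i), your reduction of (ii) to the case $r=d+1$ of (iii), and your overall strategy for (iii) --- exhibit one explicit family for each $r$ and verify it via Propositions \ref{failureWLP} and \ref{smoothness} --- all coincide with the paper. The gap is that the explicit family you propose is not a Togliatti system for \emph{any} $r$ in the required range, and the failure mode is not the one you flag (minimality or smoothness) but the Togliatti property itself. Your $I_t$ equals $(x_0^d,x_1^d,x_2^d)+x_0^{d-t}x_1(x_0,x_2)^{t-1}$; test condition (2) of Theorem \ref{teathm} on the hyperplane $x_1=-(x_0+x_2)$ (sufficient by Proposition \ref{lem-L-element}). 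Writing $e_j=x_0^{d-j}x_2^j$, the restricted generators are $e_0$, $e_d$, $\pm\sum_i{d\choose i}e_i$ and $\mp(e_j+e_{j+1})$ for $j=0,\dots,t-1$. The vectors $e_0,\,e_0+e_1,\dots,e_{t-1}+e_t$ span exactly $\langle e_0,\dots,e_t\rangle$; adjoining $e_d$ keeps the collection independent when $t<d$; and $\sum_i{d\choose i}e_i$ has nonzero coefficient on $e_{d-1}\notin\{e_0,\dots,e_t,e_d\}$ as soon as $t\le d-2$, so it too is independent of the rest. Hence for every $t\le d-2$, i.e.\ for every $r=3+t\le d+1$, the generators stay linearly independent on the hyperplane and $I_t$ has the WLP in degree $d-1$: for instance $(x_0^5,x_1^5,x_2^5,x_0^4x_1,x_0^3x_1x_2,x_0^2x_1x_2^2)$ is not Togliatti. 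Your appeal to Remark \ref{trivialB} is a misreading: that remark needs $t={n+d-2\choose n-1}$ (so $t=d$ when $n=2$) precisely so that $x_0F_1,\dots,x_0F_t$ restricted to the hyperplane fill up $(x_1+x_2)\cdot k[x_1,x_2]_{d-1}$ and thereby capture $x_0^d$; for smaller $t$ an ideal of the form $(x_0^d,\dots,x_n^d)+x_0(F_1,\dots,F_t)$ carries no automatic dependence.

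The repair, which is what the paper does, is to choose the added monomials so that a dependence is forced on the hyperplane. The paper takes $I_5=(x_0^d,x_1^d,x_2^d)+x_0^{d-1}(x_1,x_2)$ and, for $r>5$, $I_r=(x_0^d,x_1^d,x_2^d)+x_0^{d-r+3}x_1x_2\bigl((x_0,x_1)^{r-5}+(x_2^{r-5})\bigr)$: on $x_2=-(x_0+x_1)$ the generator $x_0^{d-r+3}x_1x_2\cdot x_2^{r-5}$ becomes $\pm x_0^{d-r+3}x_1x_2(x_0+x_1)^{r-5}$, a combination of the generators $x_0^{d-r+3}x_1x_2\cdot x_0^{r-5-k}x_1^k$, so $I_r$ is Togliatti with $\mu(I_r)=r$; minimality and smoothness are then checked exactly as you describe. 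So the architecture of your argument is right, but its central construction must be replaced by one of this ``common monomial times $(x_0,x_1)^{e}+(x_2^{e})$'' type.
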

\begin{proof} (i) It follows from Theorem \ref{mainthm1}.

(ii) By definition we have $\rho (2,d)\le d+1$ for any $d\ge 4$. The
inequality $\rho ^s(2,d)\ge d+1$ (and, hence,  $\rho ^s(2,d)=\rho
(2,d)=d+1$) will follows from (iii).

(iii) For any $d\ge 4$ and for any $5\le r\le d+1$, we consider the ideals
$$I_5=(x_0^d,x_1^d,x_2^d)+x_0^{d-1}(x_1,x_2), \text{ and for } r>5$$
$$I_r=(x_0^d,x_1^d,x_2^d)+x_0^{d-r+3}x_1x_2(x_0^{r-5},x_0^{r-6}x_1,\cdots ,x_0x_1^{r-6},x_1^{r-5},x_2^{r-5}).$$
We have $\mu(I_r)=r$ and it follows from Propositions \ref{failureWLP}
and \ref{smoothness} that $I_r\in \mathcal{ T}^s(2,d)\subset
k[x_0,x_1,x_2]$,  which proves what we want.
\end{proof}

\begin{rem} \rm
 Proposition \ref{interval} does not generalize to the case $n\ge 3$,
i.e. not all values of $r $, $\mu^s(n,d)  \le r \le \rho^s(n,d)$, occur
as the minimal number of generators of a smooth Togliatti system $I\in
\mathcal{ T}^s(n,d)$.
The first case is illustrated in next Lemma for the case $d=3$ and
next  Proposition for the general  case $d\ge 4$.
\end{rem}

\begin{lem}\label{d=3,2n+3}
Assume $n\ge 4$ and let $I$ be a minimal Togliatti system of cubics.
Then, $\mu (I)\ge 2n+1$. In addition, we have:

\begin{itemize}
\item[(i)] $\mu(I)=2n+1$ if and only if  $I$ is trivial, i.e., up to
permutations of the coordinates, $I=(x_0^3, \cdots
,x_n^3)+x_0^2(x_1,\cdots ,x_n)$. In particular, $I\in {\mathcal
T}(n,3)\setminus {\mathcal T}^s(n,3)$.
\item[(ii)] $\mu(I)=2n+2$ if and only if  $I$ is trivial, i.e., up to
permutations of the coordinates, $I=(x_0^3, \cdots
,x_n^3)+x_ix_j(x_0,\cdots ,x_n)$ with $i\ne j$. In particular, $I\in
{\mathcal T}(n,3)\setminus {\mathcal T}^s(n,3)$.
\item[(iii)] $\mu(I)\ne 2n+3$.
\end{itemize}
\end{lem}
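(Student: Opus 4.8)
\emph{Plan of proof.}
The argument goes through the combinatorial translation of Proposition~\ref{failureWLP}. Write $I=(x_0^3,\dots ,x_n^3,m_1,\dots ,m_h)$ with $h=\mu(I)-(n+1)$, so the $m_l$ are non-pure cubic monomials and $A_I$ is the set of cubic lattice points of $3\Delta_n$ with the $n+1$ vertices and the $h$ points $p_{m_1},\dots ,p_{m_h}$ removed. Let $W\subseteq k[y_0,\dots ,y_n]_2$ be the space of quadrics vanishing on $A_I$. By Proposition~\ref{failureWLP}, $I$ is a minimal Togliatti system if and only if $\dim_k W=1$ and the generator $Q$ of $W$ satisfies $Q(p_{m_l})\ne 0$ for every $l$ (if $\dim_k W\ge 2$, a combination of two independent quadrics vanishing also at some $p_{m_l}$ would contradict minimality). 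Since $\mu(n,3)=2n+1$ for $n\ge 3$ was already recorded, $\mu(I)\ge 2n+1$; so it remains to decide, for $h\in\{n,n+1,n+2\}$, for which configurations $\{m_l\}$ one has $\dim_k W=1$ with generator avoiding the $p_{m_l}$.

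Write $Q=\sum_i u_iy_i^2+2\sum_{i<j}v_{ij}y_iy_j$, so that $Q(x_i^3)=9u_i$, $Q(x_i^2x_j)=4u_i+u_j+4v_{ij}$ and $Q(x_ix_jx_k)=u_i+u_j+u_k+2(v_{ij}+v_{ik}+v_{jk})$. The key local fact is that the restriction of $Q$ to the coordinate plane $\langle x_i^3,x_j^3,x_k^3\rangle\cong\PP^2$ is a conic vanishing on the points of $A_I$ lying on the $2$--face $\{i,j,k\}$ of $3\Delta_n$; if $A_I$ contains the center monomial $x_ix_jx_k$ and at least five of the six edge monomials of that face, an elementary computation (in the spirit of $\mu(2,3)=4$) forces this conic to be identically zero, i.e. $u_i=u_j=u_k=v_{ij}=v_{ik}=v_{jk}=0$. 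Call such a $2$--face \emph{good}. If every index and every pair of indices lies on a good $2$--face we conclude $Q=0$, a contradiction. Now a $2$--face $\{i,j,k\}$ fails to be good only if its center monomial, or at least two of its edge monomials, lie among the $m_l$; a center monomial spoils one face, while an edge monomial $x_a^2x_b$ leaves every face $\{a,b,k\}$ good unless a second edge monomial of that face is also removed. Counting the $\binom{n+1}{3}$ $2$--faces of $3\Delta_n$ against $h\le n+2$ shows that, for $n\ge 4$, the only way to avoid enough good faces is that $\{m_l\}$ be concentrated in one of a few explicit configurations.

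For $h=n$ the only such configuration is that $\{m_l\}$ picks one edge monomial on each of the $n$ edges through a common vertex $x_0^3$; a second round of evaluations on the triples $x_0x_jx_k$ then forces each of these to equal $x_0^2x_j$ (otherwise $u_0=0$ and $Q=0$), so $I=(x_0^3,\dots ,x_n^3)+x_0^2(x_1,\dots ,x_n)$, the system in (i), and the quadric $y_0(2y_0-y_1-\dots -y_n)$ realizes it as a minimal Togliatti system. For $h=n+1$ one gets either the previous picture plus one extra monomial -- so $I$ properly contains the Togliatti system $(x_0^3,\dots ,x_n^3)+x_0^2(x_1,\dots ,x_n)$ and is not minimal -- or that $Q$ is a scalar multiple of $y_ay_b$ for some $a\ne b$, whereupon $Q(p_{m_l})\ne 0$ forces $\{m_l\}$ to be exactly the $n+1$ cubics divisible by $x_ax_b$, i.e. the system in (ii) (a minimal Togliatti system with $Q=y_ay_b$). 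For $h=n+2$ every sub-case collapses: the triple equations either force $Q=0$, or $I$ properly contains one of the Togliatti systems of (i) or (ii), contradicting minimality; hence $\mu(I)\ne 2n+3$. Finally, the systems in (i) and (ii) are trivial Togliatti systems of cubics in $n\ge 3$ variables, so Proposition~\ref{triv smooth} gives that they are not smooth, i.e. they lie in $\mathcal T(n,3)\setminus\mathcal T^s(n,3)$.

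The delicate part is the counting of good $2$--faces and the $h=n+2$ case-check when $n=4$, where $\binom{n+1}{3}=10$ is small: there one must use that two of the $m_l$ can make at most one edge of $3\Delta_n$ ``doubly removed'' (hence spoil at most $n-1$ faces), and that the triple equations kill $Q$ even when several edges have one monomial removed -- which also disposes of the possibility of $\dim_k W=1$ with an irreducible generator.
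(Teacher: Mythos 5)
Your reduction is sound and genuinely different from the paper's argument: the equivalence ``$I$ is a minimal Togliatti system $\Leftrightarrow$ the space $W$ of quadrics through $A_I$ is one-dimensional and its generator is nonzero at every removed non-vertex point'' does follow from Proposition \ref{failureWLP}, and your good-face lemma (a quadric vanishing at the center and at five of the six edge points of a $2$-face of $3\Delta_n$ has zero restriction to that face) checks out by direct computation. The paper proceeds quite differently: induction on $n$, with the base case $n=4$ verified by Macaulay2 and, for $n\ge 5$, restriction to a coordinate hyperplane to factor the quadric as $L_0F_1$ (via Lemma \ref{L_0}) before classifying the possibilities. Your route, if completed, would be uniform in $n\ge 4$ and computer-free, which would be a genuine improvement.

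However, there is a real gap: the step ``counting the $\binom{n+1}{3}$ faces against $h\le n+2$ shows that the only way to avoid enough good faces is one of a few explicit configurations'' is asserted, not proved, and as stated it is not even accurate. For instance, with $h=n$ the removed set consisting of the $n-1$ centers $x_ax_bx_k$ ($k\ne a,b$) together with $x_a^2x_b$ also leaves the pair $\{a,b\}$ on no good face, so it is not excluded by the count; it is eliminated only by a further evaluation (the surviving quadric is forced to be proportional to $y_ay_b$, which is nonzero at the non-removed point $x_ax_b^2$). Thus the enumeration of all spoiling configurations, together with the second-round evaluations disposing of each, is precisely the content of the lemma, and for $h=n+1$ and especially $h=n+2$ — where one must also rule out a one-dimensional $W$ spanned by an irreducible quadric, and where the case analysis is tightest exactly at $n=4$ — you only gesture at it, flagging it yourself as ``the delicate part.'' That finite check is the very point where the authors resort to Macaulay2; your claim that it can be carried out by hand via the face count is plausible but unsubstantiated, so what you have is a credible plan rather than a proof of (i)--(iii).
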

\begin{proof} We proceed by induction on $n$. With Macaulay2 (\cite{M}) we easily
check that $\mu(I)\ge 9$ for any $I\in {\mathcal T}(4,3)$. Assume now
$n\ge 5$ and suppose that the result is true for $n-1$. We take
$I=(x_0^3, \cdots ,x_n^3,m_1,\cdots ,m_{n-1})$ with
$m_i=x_0^{a_0^i}\cdots x_n^{a_n^i}$, $a_0^i+\cdots +a_n^i=3$, and we
will see that there is no  hyperquadrics $F_2$ containing all points of
$A_I$. Assume it exists and we will get a contradiction. Wlog we can
assume  that $x_0$ appears explicitly in the monomial $m_1$ and $A_I^0$
is equal to $3\Delta _{n-1}$ minus $n$ vertices and at most $n-2$ other
points. By induction no hyperquadric in $x_1, \cdots ,x_n$ contains
$A_I^0$. So $F_2$ decomposes as $F_2=L_0F_1$ and since there is no
hyperplane $F_1$ containing all the points of $A_I\setminus A_I^0$
we get a contradiction.

Let us now classify all Togliatti systems $I\in {\mathcal T}(n,3)$,
$n\ge 4$, with $2n+1\le \mu(I)\le 2n+3$.

(i)  Assume $n=4$, $I\in {\mathcal T}(4,3)$ and $ \mu(I)=2n+1$. Using
Macaulay2 we get that $I$ is trivial. Suppose now $n\ge 5$, let
$I=(x_0^3,\cdots ,x_n^3, m_1,\cdots, m_n)\in {\mathcal T}(n,3)$ with
$m_i=x_0^{a_0^i}x_1^{a_1^i}\cdots x_n^{a_n^i}$ and $\sum _{j=0}^na_j^i
=3$, and let $F_2$ a hyperquadric passing through the points of $A_I$.
Wlog we can assume $a_0^1,a_0^2\ge 1$. Therefore, $F_2$ factorizes as
$F_{2}=L_0L_1$ and since $F_{2}$ cannot miss any point of $A_I$  we must
have $A_I^{2}=\emptyset $ which  forces $m_1=x_0^{2}x_1, \cdots ,
m_n=x_0^{2}x_{n}$ and hence $I$ is trivial.

(ii) Using Macaulay2 we prove that if $n=4$, $I\in {\mathcal T}(4,3)$
and $ \mu(I)=2n+2$ then $I$ is trivial. Suppose now $n\ge 5$ and let
$I=(x_0^3,\cdots ,x_n^3, m_1,\cdots, m_{n+1})$ with
$m_i=x_0^{a_0^i}x_1^{a_1^i}\cdots x_n^{a_n^i}$ and $\sum _{j=0}^na_j^i
=3$. Wlog we can assume $a_0^1\ge \cdots \ge a_0^{n+1}\ge 0$ and
$a_0^1>0$. If $a_0^3>0$ then $a_0^{n+1}>0$ by Lemma \ref{L_0} and
$F_2=L_0F_1$ where $F_1$ is a hyperplane containing all points of
$A_I\setminus A_I^0$.  This is possible if and only if $I$ is trivial of
type $I=(x_0^3, \cdots ,x_n^3)+x_ix_j(x_1,\cdots ,x_n)$ with $i\ne j$.
If $a_0^3=0$ then using hypothesis of induction together with the
fact that $a_0^1>0$ we get that the restriction of $x_0^3,\cdots ,x_n^3,
m_1,\cdots, m_{n+1}$ to the hyperplane $x_0=0$ is trivial of type
$(x_1^3, \cdots ,x_n^3)+x_1^2(x_2,\cdots ,x_n)$ or
$(x_1^3, \cdots ,x_n^3)+x_ix_j(x_1,\cdots ,x_n)$ with $1\le i< j\le n$.
Therefore, either $I=(x_0^3,x_1^3, \cdots ,x_n^3)+x_1^2(x_2,\cdots ,x_n)$ or
$I=(x_0^3,x_1^3, \cdots ,x_n^3)+x_ix_j(x_1,\cdots ,x_n)$ with $1\le i<
j\le n$; and none of them belongs to ${\mathcal T}(n,3)$.

(iii) Again using Macaulay 2 we prove that the result is true for
$n=4$.  Suppose now $n\ge 5$ and let $I=(x_0^3,\cdots ,x_n^3,
m_1,\cdots, m_{n+2})$ with $m_i=x_0^{a_0^i}x_1^{a_1^i}\cdots
x_n^{a_n^i}$ and $\sum _{j=0}^na_j^i =3$. Wlog we can assume $a_0^1\ge
\cdots \ge a_0^{n+2}\ge 0$ and $a_0^1>0$. If $a_0^4>0$ then
$a_0^{n+1}>0$ by Lemma \ref{L_0} and $F_2=L_0F_1$ but this is impossible
since there is no a hyperplane containing all points of $A_I\setminus
A_I^0$ and no point of $3\Delta _n\setminus A_I$ a part from the
vertices. If $a_0^4=0$ then using hypothesis of induction together with the
fact that $a_0^1>0$ we get that the restriction of $x_0^3,\cdots ,x_n^3,
m_1,\cdots, m_{n+2}$ to the hyperplane $x_0=0$ is trivial of type
$(x_1^3, \cdots ,x_n^3)+x_1^2(x_2,\cdots ,x_n)$ or
$(x_1^3, \cdots ,x_n^3)+x_ix_j(x_1,\cdots ,x_n)$, $1\le i< j\le n$, or
$(x_1^3, \cdots ,x_n^3)+x_1^2(x_2,\cdots ,x_n)+(x_{i_1}x_{i_2}x_{i_3})$,
$1\le i_1\le \i_2\le i_3\le n$ or
$(x_1^3, \cdots ,x_n^3)+x_ix_j(x_1,\cdots
,x_n)+(x_{i_1}x_{i_2}x_{i_3})$, $1\le i< j\le n$, $1\le i_1\le \i_2\le
i_3\le n$. Therefore, $I=(x_0^3,x_1^3, \cdots ,x_n^3)+x_1^2(x_2,\cdots
,x_n)$ or $I=(x_0^3,x_1^3, \cdots ,x_n^3)+x_ix_j(x_1,\cdots ,x_n)$,
$1\le i< j\le n$ or
  $I=(x_0^3,x_1^3, \cdots ,x_n^3)+x_1^2(x_2,\cdots
,x_n)+(x_{i_1}x_{i_2}x_{i_3})$, $1\le i_1\le \i_2\le i_3\le n$ or
$I=(x_0^3,x_1^3, \cdots ,x_n^3)+x_ix_j(x_1,\cdots
,x_n)+(x_{i_1}x_{i_2}x_{i_3})$, $1\le i< j\le n$, $1\le i_1\le \i_2\le
i_3\le n$; and none of them belongs to ${\mathcal T}(n,3)$.
\end{proof}

\begin{prop}\label{2n+3}
Let $n\geq 3$ and $d\geq 4$. Then
there is no $I \in \mathcal{ T}^s(n,d)$ with  $\mu(I)=2n+3$.
\end{prop}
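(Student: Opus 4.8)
The plan is to argue by induction on $n\ge 3$, the base case $n=3$ being the most delicate, and to derive a contradiction from the existence of $I=(x_0^d,\dots,x_n^d,m_1,\dots,m_{n+2})\in\mathcal{T}^s(n,d)$ with $\mu(I)=2n+3$, where, by minimality of $I$, the $m_i$ are pairwise distinct monomials of degree $d$, none of them a power of a single variable. By Proposition \ref{failureWLP} fix a hypersurface $F_{d-1}$ of degree $d-1$ through $A_I$ containing no lattice point of $d\Delta_n\setminus A_I$ apart from vertices. The first, purely combinatorial, observation is that the number of pairs $(i,j)$ with $x_j\mid m_i$ is at least $2(n+2)>2(n+1)$, since every $m_i$ involves at least two variables; hence some variable, say $x_0$, divides at least three of the $m_i$, and after reordering so that $a_0^1\ge a_0^2\ge\dots\ge a_0^{n+2}\ge 0$ we have $a_0^3\ge 1$.

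Suppose first $a_0^4\ge 1$. Then Lemma \ref{L_0} (with $h=n+2$) forces $a_0^i>0$ for all $i$, so every $m_i$ is divisible by $x_0$; consequently $A_I^0=d\Delta_{n-1}$ with only its $n$ vertices deleted, which lies on no hypersurface of degree $d-1$ in $x_1,\dots,x_n$ (Theorem \ref{mainthm1}), so $F_{d-1}=x_0F_{d-2}$. Restricting the $2n+3$ generators to the general hyperplane $x_n=x_0+\dots+x_{n-1}$ and comparing the coefficient of a monomial such as $x_1^{d-1}x_2$ — which is neither a power of a variable nor divisible by $x_0$ — one finds, exactly as in the proof of Theorem \ref{mainthm2}, that $x_0^{d-1},\dots,x_n^{d-1},m_1/x_0,\dots,m_{n+2}/x_0$ again form a Togliatti system, now of degree $d-1$ and with at most $2n+3$ generators (strictly fewer if some $m_i/x_0$ is a power of a variable). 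Iterating, one either reaches a Togliatti system with $\le 2n+2$ generators, contradicting Theorems \ref{mainthm1} and \ref{mainthm2}, or descends to degree $3$, where Lemma \ref{d=3,2n+3}(iii) gives the contradiction when $n\ge 4$; for $n=3$ the descent is stopped at $d=4$ and finished by the direct argument $F_3=x_0F_2$, as in Theorem \ref{mainthm2}.

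There remains the case $a_0^4=0$, i.e.\ exactly three of the $m_i$ — say $m_1,m_2,m_3$ — are divisible by $x_0$. The restriction $F_{d-1}(0,x_1,\dots,x_n)$ cannot vanish identically: otherwise $x_0$ would divide $F_{d-1}$ while $m_4,\dots,m_{n+2}$ are non-vertex lattice points of $d\Delta_n\setminus A_I$ on $\{x_0=0\}$, against the minimality part of Proposition \ref{failureWLP}. Hence $(x_1^d,\dots,x_n^d,m_4,\dots,m_{n+2})$ is a minimal Togliatti system in $n$ variables with $2n-1=\mu(n-1,d)$ generators, so by Theorem \ref{mainthm1} it is trivial — or, when $n=3$, possibly one of the two sporadic systems with $d=4,5$ — and up to permutation $\{m_4,\dots,m_{n+2}\}=\{x_1^{d-1}x_2,\dots,x_1^{d-1}x_n\}$. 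Now $x_1$ divides these $n-1\ge 2$ monomials, so one re-runs the dichotomy with $x_1$ in place of $x_0$: either $x_1$ divides every $m_i$ and we are back in the degree-lowering situation, or $x_1$ divides exactly $m_4,\dots,m_{n+2}$ and restricting to $\{x_1=0\}$ pins $m_1,m_2,m_3$ down to monomials of the form $x_0^{d-1}x_j$; the resulting explicit ideal is then shown by hand either to fail to be a Togliatti system (no degree $d-1$ hypersurface through $A_I$) or to violate condition (ii) of the smoothness criterion, Proposition \ref{smoothness}. The case $n=3$ is handled the same way, the restrictions falling in the completely classified range $n=2$ of Theorem \ref{mainthm1}.

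The step I expect to be the main obstacle is exactly this last case $a_0^4=0$: the combinatorics alone does not force triviality, so one must enumerate the possible shapes of the three ``extra'' monomials $m_1,m_2,m_3$ and eliminate each, and it is precisely here that smoothness — not merely minimality — is used, in contrast with the $n\ge 3$ part of Theorem \ref{mainthm2}. The only real bookkeeping nuisance is to prevent the induction on $n$ and the internal descent on $d$ from clashing, so that the small cases $n=3$ and $d=3,4$ are genuinely covered.
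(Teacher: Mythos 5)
Your overall strategy coincides with the paper's (a dichotomy on how many of the extra monomials a fixed variable divides, Lemma \ref{L_0} plus a degree-lowering descent on one side, restriction to a coordinate hyperplane plus Theorem \ref{mainthm1} on the other), but the way you close the two branches has genuine gaps. In the descent branch, arriving at a Togliatti system of lower degree with at most $2n+2$ generators is not by itself ``contradicting Theorems \ref{mainthm1} and \ref{mainthm2}'': those theorems do not forbid such systems, they say the minimal ones are trivial, and the contradiction must then be extracted by multiplying the trivial subsystem back by the common factor and violating the minimality of $I$ -- a step you omit and the paper carries out. More seriously, your claim that for $n=3$ the descent stops at $d=4$ and is ``finished by the direct argument $F_3=x_0F_2$'', with smoothness needed only in the other branch, is false. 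The Remark immediately after Proposition \ref{2n+3} exhibits two minimal, non-smooth Togliatti systems with $n=3$, $d=4$, $\mu(I)=9=2n+3$, for instance $(x_0^4,x_1^4,x_2^4,x_3^4)+x_0^2(x_0x_2,x_0x_3,x_1^2,x_1x_2,x_1x_3)$, in which all five extra monomials are divisible by $x_0$: these lie exactly in your descent branch and survive any argument using only minimality and the Togliatti condition. Hence no direct argument can finish $n=3$ there; one must invoke the smoothness criterion (Proposition \ref{smoothness}) to exclude the surviving candidates, which is precisely where the paper resorts to a Macaulay2 check. Your final paragraph has the role of smoothness backwards: the paper's Case 1 (your $a_0^4=0$ case) is closed by showing the candidate ideals are not Togliatti at all, with no use of smoothness.

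The $a_0^4=0$ branch also has holes when $n=3$. After identifying $\{m_4,\dots,m_{n+2}\}=x_1^{d-1}\{x_2,\dots,x_n\}$, your dichotomy ``either $x_1$ divides every $m_i$ or it divides exactly $m_4,\dots,m_{n+2}$'' is complete only for $n\ge 4$, since Lemma \ref{L_0} requires $x_1$ to divide at least four of the $m_i$; for $n=3$ the possibility that $x_1$ divides exactly three of them, namely $m_4,m_5$ and one of $m_1,m_2,m_3$, is covered by neither alternative. Moreover, restricting to $\{x_1=0\}$ produces a Togliatti system in three variables with six generators, and ``pinning down'' $m_1,m_2,m_3$ from it would require a classification of minimal monomial Togliatti systems with $\mu=6$ in three variables without smoothness, which is not available (Theorem \ref{mainthm2} uses smoothness precisely in the case $n=2$). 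Finally, the sporadic outputs of Theorem \ref{mainthm1} for the restriction to $\{x_0=0\}$ (the $n=3$ systems of degree $4$ and $5$) are acknowledged but never eliminated, whereas the paper lists the resulting candidate ideals explicitly and checks that they are not Togliatti. None of this is unfixable -- the paper repairs it by making the dichotomy global (every variable divides at most three of the $m_i$ versus some variable divides at least four), by explicit enumeration in the first branch, and by Lemma \ref{d=3,2n+3} together with Macaulay2 and the smoothness criterion for $n=3$ -- but as written your argument does not close the case $n=3$, which is exactly the delicate case you flagged.
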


\begin{proof}
We distinguish two cases:
\begin{enumerate}
\item For all $0\le j \le n$,  $\# \{i\mid a_j^i\ge 1\}\le 3$, i.e.
every variable appears in at most three of the monomials $m_1,
\cdots,m_{n+2}$.

If one of the monomials contains all the variables, the other $n+1$
monomials contain two variables each, and we are in the same situation
of Theorem \ref{mainthm2}, Case 1, which is impossible. Therefore no
monomial contains all variables and at least two variables appear in
three monomials. Assume that $x_0$ appears in three monomials; then
$F_{d-1}$ passes through the integral points of $A_I^0$. Recall that
$A_I^0$  is equal to $d\Delta_{n-1}$ minus the $n$ vertices and $n-1$
other points. So the removed points form a Togliatti system $I'$ in the
$n$ variables $x_1, \cdots,x_n$ with $\mu=2n-1$ and we can apply Theorem
\ref{mainthm1}. There are two possibilities:
\begin{itemize}
\item[(i)] $n=3$ and $I'$ is one of the two special Togliatti systems
of degree $5$ or  $4$ of Theorem \ref{mainthm1}. If  $d=5$, up to
permutation of the variables the only possibility is $I=(x_0^5, \cdots ,
x_3^5,x_0^4x_2, x_0^4x_3, x_1^3x_2x_3,x_1x_2^2x_3^2,x_0^ax_1^b)$ with
$a,b>0$. But it is easy to check that this is not a Togliatti system. In
the case $d=4$ there are two possibilities: $I=(x_0^4, \cdots ,
x_3^4,x_0^3x_2, x_0^3x_3, x_1x_2x_3^2,x_1^2x_2^2,x_0^ax_1^bx_3^c)$ with
$a,b>0$, $c\geq 0$, or $(x_0^4, \cdots , x_3^4,x_0^2x_1x_2, x_1x_2x_3^2,
x_1^2x_2^2,x_0^ax_3^b, x_0^cx_3^d)$ with $a,b,c,d>0$. Both systems are
not Togliatti.
\item[(ii)] $I'$ is of the form $(x_1^d, \cdots,x_n^d)+x_1^{d-1}(x_2,
\cdots, x_n)$.  In this case  $x_1$ appears in at least $n-1$ monomials,
therefore $n=3$ or $n=4$.

If $n=3$,  the other three monomials in $I$ are either of the form
$x_0^{d-1}(x_2, x_3), x_0^ax_1^bx_2^c$, or of the form $x_0^{d-1}(x_1,
x_3), x_0^ax_2^bx_3^c$,  with $a>0$, $b>0$, $c\geq 0$. It is immediate
to check that they are not Togliatti systems.
 If $n=4$ then the six monomials $m_1, \cdots, m_6$ are of the form
$x_0^{d-1}(x_2,x_3,x_4), x_1^{d-1}(x_2,x_3,x_4)$. Also in this case the
system is not Togliatti.
\end{itemize}
\item There exists an index $j$ such that $\# \{i\mid a_j^i\ge 1\}\geq
4$, i.e. one of the variables appears in at least $4$ monomials. We can
assume $j=0$. Therefore, by Lemma \ref{L_0}, $x_0$ appears in all
monomials $m_1, \cdots, m_{n+2}$.  Let $m_i'=m_i/x_0$, $i=1, \cdots,
n+2$. As in the proof of Theorem \ref{mainthm2}, case 2, we observe that
$m_1', \cdots,m'_{n+2}$, together with $x_0^{d-1}, \cdots, x_n^{d-1}$,
form a Togliatti system $I_1$ of degree $d-1$. We
distinguish the following possibilities:
\begin{itemize}
\item[(i)] at least one of the monomials $m'_i$ is the $(d-1)$th power
of a variable, so $\mu(I_1)<2n+3$; or
\item[(ii)] $\mu(I_1)=\mu(I)=2n+3$.
\end{itemize}
In case (i), if $d>4$, $I_1$ is trivial, which implies that $I$ contains
a trivial Togliatti system and therefore is non minimal: contradiction.
If $d=4$, $I_1\in \mathcal{ T}(n,3)$ and $\mu(I_1)\leq 2n+2$.

In case (ii), we can apply  the  above argument to $I_1$, and so on, by induction.

In
any case, applying repeatedly this procedure, possibly involving
different variables, we arrive to a Togliatti system $I_1$ of degree $d=3$
with $\mu\leq 2n+3$, which is obtained from $I$  dividing the monomials
$m_1, \cdots, m_{n+2}$ by a common monomial factor $M$.  If $n=3$,
we conclude with the help of Macaulay2.
If $n\geq 4$,  by Lemma \ref{d=3,2n+3}, $I_1$ is trivial of
type $(x_0^3,\cdots,x_n^3)+x_0^2(x_1,\cdots,x_n)$ or
$(x_0^3,\cdots,x_n^3)+x_ix_j(x_0,\cdots,x_n)$.  In both cases  $I$ is
 not minimal and we are done.
\end{enumerate}
\end{proof}

\begin{rem} \rm If $n=3$ and $d=4$ one can check with the help of
Macaulay2 that there exist two
types of minimal Togliatti systems $I$ with $\mu(I)=2n+3=9$, both non
smooth,  precisely $(x_0^4, x_1^4, x_2^4, x_3^4)+x_0^2(x_0x_2,x_0x_3,
x_1^2,x_1x_2,x_1x_3)$ and $(x_0^4, x_1^4, x_2^4, x_3^4)+x_0^2(x_1^2,x_1x_2,x_2^2,x_0x_3,
x_3^2)$.
\end{rem}

We note that if $d=2$ the ideal $I=(x_0, x_1)^2+(x_2,x_3,x_4,x_5)^2$,
with $\mu(I)=2n+3=13$, belongs to $\mathcal{ T}^s(5,2)$, while if $d=3$
then $2n+3<\mu^s(n,3)$ for any $n\geq 4$.

Computations made with Macaulay2 illustrate the  complexity of the
general case. However, some ranges and some sporadic values can be
covered. For example:
\begin{ex} \rm
For any $d>n\ge 3$ and for any $r$, ${d+n-2\choose n-2}+n+2\le r\le
{d+n-2\choose n-2}+d+1$, there exists $I\in \mathcal{ T}^s(n,d)$ with
$\mu(I)=r$. (Notice that when $n=3$ we have
$d+6\le r\le 2d+2$). In fact, it is enough to take $$I=(x_0,x_1,\cdots,
x_{n-2})^d+(x_{n-1}^d,x_n^d)+(x_{n-1},x_n)^{d-h}m'$$
where $2\le h\le d-n+1$ and $m'$ is a monomial of
degree $h$ containing only $x_0, \cdots ,x_{n-2}$.
\end{ex}

Nevertheless if we delete the smoothness hypothesis, we can generalize
Proposition \ref{interval} and we get
\begin{prop} \label{interval2}  With the above notation we have:
\begin{itemize}
\item[(i)] For any $d\ge 4$, $\mu (n,d)=2n+1$.
\item[(ii)] For any $d\ge 4$, $\rho (n,d)={n+d-1\choose n-1}$.
\item[(iii)] For any $d\ge 4$, $n=3$ and any integer $r$ with $\mu (3,d)=7\le r \le \rho
(3,d)= {d+2\choose
2}$, there exists $I\in \mathcal{ T}(3,d)$ with $\mu (I)=r$.
\end{itemize}
\end{prop}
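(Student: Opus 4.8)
The plan. Part (i) is immediate from Theorem \ref{mainthm1}, where $\mu(n,d)=2n+1$ is proved for all $n\ge 2$ and $d\ge 4$; in particular $\mu(3,d)=7$. For part (ii) the inequality $\rho(n,d)\le\binom{n+d-1}{n-1}$ is the rightmost one in (\ref{ineq}), so what remains is to exhibit, for every $n\ge 2$ and $d\ge 4$, a single minimal monomial Togliatti system $I$ with $\mu(I)=\binom{n+d-1}{n-1}$ (for $n=2$ the system $I_{d+1}$ of Proposition \ref{interval} already does this). Since $\mu(I)$ then equals the threshold of Theorem \ref{teathm}, the complementary lattice set $A_I\subset d\Delta_n$ has exactly $\binom{n+d}{n}-\binom{n+d-1}{n-1}=\binom{n+d-1}{n}$ points and, by artinianity, contains none of the $n+1$ vertices of $d\Delta_n$. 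I would thus write down such an $A_I$ explicitly together with an explicit hypersurface $F$ of degree $d-1$ through it, chosen so that $F$ meets $d\Delta_n$ only in $A_I$ together with (possibly) some of its vertices. Proposition \ref{failureWLP} then gives simultaneously that $I$ is a Togliatti system and that it is minimal, while artinianity is automatic because $A_I$ is vertex-free.

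For part (iii) the plan is to realise every integer $r$ with $7\le r\le\binom{d+2}{2}$ as $\mu(I)$ for some $I\in\mathcal{T}(3,d)$; the absence of the smoothness constraint is what makes it possible to fill the whole interval. I would assemble finitely many parametrised families of minimal monomial Togliatti systems in $k[x_0,x_1,x_2,x_3]$ whose ranges of $\mu$ overlap and exhaust $[7,\binom{d+2}{2}]$. Natural building blocks are: the trivial system $(x_1^d,x_2^d,x_3^d)+x_0^{d-1}(x_0,x_1,x_2,x_3)$, realising $r=7$; the families $(x_0,x_1)^d+(x_2^d,x_3^d)+(x_2,x_3)^{d-h}m'$ displayed just before this proposition, realising $d+6\le r\le 2d+2$; the ``trivial of type B'' systems of Remark \ref{trivialB} and controlled variants of them; and, for the upper part of the interval, families patterned on the systems $I_r$ of Proposition \ref{interval} (whose $n=2$ analogue already sweeps a full interval up to $\rho(2,d)$), adapted to $n=3$ so that $\mu$ ranges up to $\binom{d+2}{2}$. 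For each family the verification would have two steps: first one picks the generators so that, after restricting to the hyperplane $x_0+x_1+x_2+x_3=0$ permitted by Proposition \ref{lem-L-element}, the generating forms become linearly dependent, which gives the Togliatti property; then one exhibits the degree-$(d-1)$ hypersurface through $A_I$ and, via Proposition \ref{failureWLP}, checks that it contains no lattice point of $d\Delta_3\setminus A_I$ other than vertices, which gives minimality.

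The step I expect to be the main obstacle is this last minimality check, performed uniformly over the families: one must control the degree-$(d-1)$ hypersurface through $A_I$ well enough to exclude accidental extra lattice points on it for arbitrary $d$, and for the families sweeping the upper part of the interval this forces one to keep $A_I$ vertex-free and the hypersurface ``tight'' as the parameter moves. A secondary, more clerical obstacle will be checking that the chosen families leave no gap in $[7,\binom{d+2}{2}]$; near the two ends of the interval this is likely to require a few sporadic constructions or a short Macaulay2-assisted check in small degrees, in the style of Lemma \ref{d=3,2n+3} and of the $n=3$, $d=4$ discussion following Proposition \ref{2n+3}.
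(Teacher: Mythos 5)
Your part (i) is fine (it is exactly the paper's one-line appeal to Theorem \ref{mainthm1}), but for (ii) and (iii) what you have written is a plan rather than a proof, and the items you defer are precisely the substance of the argument. In (ii), after the trivial inequality $\rho(n,d)\le{n+d-1\choose n-1}$ everything hinges on actually exhibiting a minimal monomial Togliatti system attaining the bound; your observation that such an $A_I$ must have ${n+d-1\choose n}$ points and avoid the vertices is correct but gives no reason why a degree-$(d-1)$ hypersurface through such a set should exist, and you never write the example down. The paper does: $I=(x_0^d,\dots ,x_n^d)+x_1(x_1,\dots ,x_n)^{d-1}+x_2(x_2,\dots ,x_n)^{d-1}+\cdots +x_{n-2}(x_{n-2},x_{n-1},x_n)^{d-1}+x_0^3(x_{n-1},x_n)^{d-3}$, whose number of generators telescopes to ${n+d-1\choose n-1}$, which fails the WLP in degree $d-1$ because the generators become linearly dependent after substituting $x_0\mapsto x_1+\cdots +x_n$ (Proposition \ref{lem-L-element} and Theorem \ref{teathm}), and which is checked to be minimal. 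Without some such explicit construction, (ii) is simply not proved.

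In (iii) the families you can actually point to cover only $r=7$ and $d+6\le r\le 2d+2$; the ranges $8\le r\le d+5$ and $2d+3\le r\le {d+2\choose 2}$ --- for large $d$ almost the whole interval --- are left to unspecified ``controlled variants'' and ``adapted'' analogues of the $n=2$ family $I_r$, and you yourself flag both the minimality verification and the no-gap check as unresolved obstacles. The device you are missing, and which makes the paper's proof work, is an induction on $d$: if $J\in\mathcal{T}(3,d-1)$ then $I=(x_0^d,x_1^d,x_2^d)+x_3J\in\mathcal{T}(3,d)$ with $\mu(I)=\mu(J)+3$, so the inductive hypothesis instantly sweeps $10\le r\le {d+1\choose 2}+3$ (base case $d=4$ done by explicit ideals for $10\le r\le 14$); the values $r=8,9$ are handled by hand, $r={d+1\choose 2}+4$ by $(x_0^d,\dots ,x_3^d)+x_0(x_1,x_2,x_3)^{d-1}$, and the top range ${d+1\choose 2}+5\le r\le {d+2\choose 2}$ by the family $I_i=(x_0^d,\dots ,x_3^d)+(x_1^{i_1}x_2^{i_2}x_3^{i_3}\mid i_1+i_2+i_3=d,\ 1\le i_1<d)+x_0^{i}(x_2,x_3)^{d-i}$, $3\le i\le d-1$, where the required surface of degree $d-1$ through $A_{I_i}$ is produced by a count of conditions (${d-i+3\choose 3}-1$ conditions imposed on a ${d-i+3\choose 3}$-dimensional space of surfaces of degree $d-i$), not by guessing it explicitly. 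So the gap is concrete: you identify the right targets, but supply neither the extremal example needed for (ii) nor the inductive mechanism, base case, and top-range family that fill the interval in (iii).
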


\begin{proof} (i) It follows from Theorem \ref{mainthm1}.

(ii) By definition we have $\rho (n,d)\le {n+d-1\choose n-1}$ for any
$d\ge 4$. Let us prove that  $\rho (n,d)\ge {n+d-1\choose n-1}$, i.e.
there exists $I\in  \mathcal{ T}(n,d)$ with
$\mu (I)={n+d-1\choose n-1}$. Consider
$$I=(x_0^d,x_1^d,...,x_n^d)+x_1(x_1,...,x_n)^{d-1}+x_2(x_2,...,x_n)^{d-1}+...+x_{n-2}(x_{n-2},x_{n-1},x_n)^{d-1}+x_0^3(x_{n-1},x_n)^{d-3}.$$
We have $$ \begin{array}{ccl} \mu(I) & = & n+1+\sum
_{i=2}^{n-1}[{d-1+i\choose i}-1] + d-2 \\
& = &
d+1+\sum _{i=2}^{n-1}{d-1+i\choose i} \\ &  = & \sum
_{i=0}^{n-1}{d-1+i\choose i}\\ &  = & {d-1+n\choose n-1}.\end{array}$$

When we substitute $x_0$ by $x_1+x_2+...+x_n$ the $\mu(I)$ generators of
$I$ become $k$-linearly dependent; so $I$ fails WLP in degree $d-1$
(Theorem \ref{teathm}) and $I$ is minimal because no proper subset of
the generators of $I$ defines a Togliatti system. Therefore, $I\in
\mathcal{ T}(n,d)$.

(iii) Assume $n=3$. For $r=7$
we take $I=(x_0^d,x_1^d,x_2^d,x_3^d)+x_0^{d-1}(x_1,x_2,x_3)$, for $r=8$
we take $ I=(x_0^d,x_1^d,x_2^d,x_3^d)+x_0^{d-2}x_1(x_0,x_1,x_2,x_3)$ and
for $r=9$ we take
$I=(x_0^d,x_1^d,x_2^d,x_3^d)+x_0^{d-2}(x_1^2,x_0x_1,x_2^2,x_2x_3,x_3^2)$.

We
will now proceed by induction on $d$. In the case $d=4$ we exhibit an explicit example for any $10 \le r\le 14$ (note that the case $r=15$ is covered by the example given in (ii)):
\begin{itemize}
\item  $r=10$:  $(x_0,x_1)^4+ (x_2,x_3)^4$ (smooth);
\item $r=11$: $(x_0,x_1)^4+(x_2^4,x_2^3x_3,x_2^2x_3^2,x_3^4,x_0x_2x_3^2,x_1x_2x_3^2)$;
\item $r=12$: $(x_0,x_1)^4+(x_2^4,x_2^3x_3,x_2x_3^3,x_3^4,x_0^2x_3^2,x_0x_1x_3^2,x_1^2x_3^2)$;
\item $r=13$: $(x_0,x_1)^4+(x_2^4,x_2^3x_3,x_2x_3^3,x_3^4,x_0^3x_3,x_0^2x_1x_3,x_0x_1^2x_3,x_1^3x_3)$;
\item if $r=14$: the systems described in  Remark \ref{trivialB} work in this case.
\end{itemize}
We suppose now $d>4$ and we will prove that  for any
$7\le r\le {d+2\choose 2}$ there exists $I\in {\mathcal T}(3,d)$ with
$\mu (I)=r$.

Indeed, for any $7\le s\le {d+1\choose 2}$ we take $J\in {\mathcal
T}(3,d-1)$ with $\mu (J)=s$  and we define $I=(x_0^d,x_1^d,x_2^d)+x_3J$.
Note that $I\in  {\mathcal T}(3,d)$ and $10\le \mu(I)=\mu(J)+3\le
{d+1\choose 2}+3$.
 Observe also that $I=(x_0^d,x_1^d,x_2^d,x_3^d)+x_0(x_1,x_2,x_3)^{d-1}
\in  {\mathcal T}(3,d)$ and $\mu(I)={d+1\choose 2}+4$. So, it only
remains to cover the values of $r$, ${d+1\choose 2}+4<r\le {d+2\choose
2}.$ To this end,  for any $3\leq i \leq d-1$ we define
$$I_{i}=(x_0^d,x_1^d,x_2^d,x_3^d)+(x_1^{i_1}x_2^{i_2}x_3^{i_3} \mid
i_1+i_2+i_3=d, \ 1\le i_1<d)+x_0^{i}(x_2,x_3)^{d-i}.$$
First of all we observe that $\mu (I_i)={d+2\choose 2}+3-i$. Therefore,
when $i $ ranges from $i=3$ to $d-1$ we sweep the interval $[{d+1\choose
2}+5, {d+2\choose 2}]$. By Proposition \ref{failureWLP} to prove that
$I_i\in  {\mathcal T}(3,d)$
it is enough to show that there is a surface $F_{d-1}$ of degree $d-1$
containing all integral points of $A_{I_i}$. Since
$A_{I_i}^1=(d-1)\Delta _2, \cdots , A_{I_i}^{i-1}=(d-i+1)\Delta _2$, we
have $F_{d-1}=L_1\cdots L_{i-1}F_{d-i}$ where $F_{d-i}$ is a surface of
degree $d-i$ containing all integral points of $A_{I_i}\setminus
\cup_{j=1}^{i-1}A_{I_i}^j$. The surfaces $F_{d-i}$ of degree $d-i$ are
parametrized by a $k$-vector space of dimension ${d-i+3\choose 3}$. On
the other hand, to contain the aligned $d-1$ points of $A_{I_i}^0$
imposes $d-i+1$ conditions on the surfaces of degree $d-i$, to contain
the points of $A_{I_i}^{i+1}= (d-i-1)\Delta _2, \cdots
,A_{I_i}^{d-1}=\Delta _2$ imposes ${d-i+1\choose 2}, \cdots , 3$
conditions, respectively, and finally  to contain the points of
$A_{I_i}^{i}$ imposes ${d-i+2\choose 2}-(d-i+1)$ conditions.  Summing up
we have ${d-i+3\choose 3}-1$ conditions. Therefore, there exists at
least a surface $F_{d-i}$ of degree $d-i$ through all integral
  points of $A_{I_i}\setminus \cup_{j=1}^{i-1}A_{I_i}^j$ and, hence  a
surface $F_{d-1}=L_1\cdots L_{i-1}F_{d-i}$ of degree $d-1$ containing
all integral points of $A_{I_i}$.
\end{proof}

\begin{rem} \rm
For $n=3, d=4$, with Macaulay2 we have obtained the list of all
minimal Togliatti systems with $\mu(I)\leq 13$. The computations become too
heavy for $\mu=14, 15$.

\end{rem}


\section{On the stability of the associated syzygy bundles}
\label{associatedbundles}

In this section we restrict our attention to the case $n=2$ and we will
analyze whether the syzygy bundle $E_I$ on $\PP^2$ associated to a
minimal smooth monomial Togliatti system $I\in  \mathcal{ T}(2,d)$  is
$\mu $-(semi)stable.

\begin{defn} \rm  A syzygy bundle $E_{d_1,\cdots ,d_r}$ on $\PP^n$ is a
rank $r-1$ vector bundle defined as the kernel
of an epimorphism
$$(f_1, \cdots , f_r) : \oplus _{i=1}^r \mathcal{
O}_{\PP^n}(-d_i)\longrightarrow \mathcal{ O}_{\PP^n}$$
where $(f_1,\cdots , f_r)\subset k[x_0,x_1\cdots ,x_n]$ is an artinian
ideal, and $d_i = deg(f_i)$.
When $d_1 = d_2 = \cdots  = d_r = d$, we write $E_{d,n}$ instead of
$E_{d_1,\cdots ,d_r}$.
\end{defn}

\begin{defn} \rm
Let E be a vector bundle  on $\PP^n$ and set
$$\mu(E) :=
\frac{c_1(E)}{rk(E)}.$$
The vector bundle $E$ is said to be $\mu$-semistable in the sense of
Mumford-Takemoto if
$\mu(F)\le \mu(E)$
for all non-zero subsheaves $F \subset E$ with $rk(F) < rk(E)$; if
strict inequality holds then $E$
is $\mu $-stable.
\end{defn}
Note that for  a rank $s$ vector bundle $E$ on $\PP^n$, with $(c_1(E), s)
= 1$, the concepts
of $\mu$-stability and $\mu$-semistability coincide.

\vskip 2mm Using Klyachko results on toric bundles (\cite{K},
\cite{K1} and \cite{K2}), Brenner deduced the following nice
combinatoric criterion for the (semi)stability of  the syzygy bundle
$E_{d_1,\ldots,d_r}$ in the case where the associated forms
$f_1,\ldots,f_r$ are all monomials. Indeed, we have

\begin{prop}\label{combinatoria} Let $I=(m_1,\cdots ,m_r)\subset k[x_0,
\cdots ,x_n]$ be a  monomial artinian ideal. Set  $d_i=\deg (m_i)$. Then
the syzygy bundle $E_{d_1,\ldots,d_r}$ on $\PP^n$ associated to $I$, is
$\mu$-semistable (resp. $\mu$-stable) if and only if for every
$J=(m_{i_1},\cdots ,m_{i_s}) \varsubsetneq I$, $s\ge 2$, the inequality
\begin{equation} \label{ineq1}
\frac{d_J-\sum _{j=1}^s d_{j_i}}{s-1}\le
    \frac{-\sum _{i=1}^rd_i}{r-1}\qquad\mbox{(resp.\ $<$)}
\end{equation}
holds, where $d_J$ is the degree of the greatest common factor of the monomials
$m_{j_i}\in J$.
\end{prop}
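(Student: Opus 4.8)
The statement is Brenner's combinatorial criterion for $\mu$-(semi)stability of a monomial syzygy bundle, so my proof proposal is to derive it from Klyachko's classification of toric vector bundles on $\PP^n$ in terms of filtrations, exactly as in Brenner's original argument. First I would recall the Klyachko data: the syzygy bundle $E=E_{d_1,\dots,d_r}$ sits in $0\to E\to\bigoplus_{i=1}^r\cO_{\PP^n}(-d_i)\xrightarrow{(m_1,\dots,m_r)}\cO_{\PP^n}\to 0$, and because all $m_i$ are monomials this sequence is $(k^*)^n$-equivariant, so $E$ is a toric bundle. I would write down, for each of the $n+1$ torus-invariant divisors (coordinate hyperplanes), the decreasing filtration $E^{\alpha}(j)$ induced on a fixed fibre, computing it from the weights of the $m_i$; the key point is that the filtration jumps are controlled by the exponent vectors of the monomials, and a subsheaf destabilizes $E$ iff a torus-invariant one does, iff a ``compatible'' subspace of the fibre (one respecting all $n+1$ filtrations) has too large a slope.

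Then the combinatorial translation. A subset $J=\{m_{j_1},\dots,m_{j_s}\}$ corresponds to a coordinate subspace of $\bigoplus\cO(-d_i)$, and its saturated image in $E$ is again a monomial syzygy bundle whose first Chern class I would compute: the generic rank of the map restricted to $J$ is $1$ (since the $m_{j_i}$ share the gcd of degree $d_J$, and dividing out that gcd gives an artinian-on-each-chart situation), so the saturation $E_J\subset E$ has rank $s-1$ and one computes $c_1(E_J)=d_J-\sum_{i=1}^s d_{j_i}$ by a Chern-class bookkeeping on the Koszul-type complex $0\to E_J\to\bigoplus_{i\in J}\cO(-d_{j_i})\to \cO(-d_J)\to 0$. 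Comparing $\mu(E_J)=\frac{d_J-\sum d_{j_i}}{s-1}$ with $\mu(E)=\frac{-\sum_{i=1}^r d_i}{r-1}$ (using $c_1(E)=-\sum d_i$, $\mathrm{rk}\,E=r-1$ from the defining sequence) gives inequality \eqref{ineq1}. The content of Klyachko's theory is precisely that these torus-invariant subsheaves, one for each subset $J$ with $s\ge 2$, suffice to test (semi)stability — no non-invariant subsheaf can do worse.

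The converse direction is the easier half: if $E$ is $\mu$-(semi)stable then in particular the inequality holds for every subsheaf, hence for every $E_J$, giving \eqref{ineq1}. For the forward direction one must genuinely invoke that a maximally destabilizing subsheaf can be chosen torus-invariant (the maximal destabilizing / Harder--Narasimhan subsheaf is canonical, hence invariant under the torus action), and that a torus-invariant reflexive subsheaf of a toric bundle is itself determined by compatible subspaces of the generic fibre, each such compatible subspace being spanned by a coordinate subset of $\bigoplus\cO(-d_i)$ modulo the image of $E$ — which is exactly the data of a subset $J$.

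\textbf{Main obstacle.} The real work is the reduction ``test only torus-invariant subsheaves, and these are exactly the $E_J$'': proving that every coordinate subspace of $\bigoplus\cO(-d_i)$ meeting $E$ yields, after saturation, a subsheaf whose Chern class is the clean expression $d_J-\sum_{i\in J}d_{j_i}$, and conversely that every invariant subsheaf arises this way. This requires the full Klyachko dictionary (filtrations $\leftrightarrow$ toric bundles, slopes $\leftrightarrow$ sums over the filtration jumps) together with a careful Chern-class computation on the $s\times 1$ ``sub-Koszul'' complex, checking that the generic rank of $(m_{j_1},\dots,m_{j_s})$ really is $1$ precisely because these monomials have a common factor of degree $d_J$ and nothing more. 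Everything else — plugging in $c_1(E)=-\sum d_i$, comparing the two slopes, the trivial converse — is routine. Since the paper only needs to quote this, I would cite \cite{K}, \cite{K1}, \cite{K2} and Brenner's paper for the criterion and sketch only the slope computation above.
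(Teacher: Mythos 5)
Your proposal is correct and follows essentially the same route as the paper, which simply cites \cite{B} (Proposition 2.2 and Corollary 6.4) for this criterion; your sketch is a faithful outline of Brenner's cited argument via Klyachko's equivariant filtrations, including the correct slope computation $c_1(E_J)=d_J-\sum_{i=1}^s d_{j_i}$, $\mathrm{rk}\,E_J=s-1$ from the sub-Koszul sequence. Nothing further is needed beyond the citation you already propose.
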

\begin{proof}
See \cite{B}, Proposition 2.2 and Corollary 6.4.
\end{proof}

\begin{ex} \rm
(1) If we consider the monomial artinian ideal
$I:=(x_0^5,x_1^5,x_2^5,x_0^{2}x_1^{2}x_2)\subset k[x_0,x_1,x_2]$,
inequality (\ref{ineq1}) is strictly fulfilled for any proper subset
$J\varsubsetneq \{x_0^5,x_1^5,x_2^5,x_0^{2}x_1^{2}x_2\}$. Therefore the syzygy bundle $E$ associated to $I$ is
$\mu $-stable.

(2) If we consider the monomial artinian ideal
$I:=(x_0^5,x_1^5,x_2^5,x_0^{4}x_1)\subset k[x_0,x_1,x_2]$, then for the
subset
$J:=\big\{x_0^5,x_0^{4}x_1 \big\}$  inequality (\ref{ineq1}) is not
fulfilled. Therefore the syzygy bundle $E_I$ associated to $I$ is not
$\mu$-stable. In fact, the slope of $E_I$ is
$\mu(E_I)=-20/3$ and the syzygy sheaf $F$ associated to $J$ is a
subsheaf of $E_I$ with slope $\mu(F)=-6$. Since $\mu(F)\nleq
\mu(E_I)$, we conclude that $E$ is not $\mu$-stable.
\end{ex}

\begin{rem} \label{rem} \rm
 Let $I$ be a monomial  artinian ideal generated by $r$ monomials $m_1, \cdots , m_r$ of
degree $d$. It easily follows from the above proposition that the syzygy
bundle $E_{d,n}$ on $\PP^n$ associated to $I$ is $\mu$-(semi)stable if
and only if for every subset $J=\{m_{i_1},\cdots ,m_{i_s}\}\varsubsetneq \{m_1, \cdots , m_r\}$ with $s:=|J|\ge 2$,
\begin{equation} \label{ineq3}
(d-d_J)r+d_J-sd >0 \quad (\mbox{resp.\ } \ge 0),
\end{equation}
where $d_J$ is the degree of the greatest common factor of the monomials
in $J$.
\end{rem}

\begin{thm}
 Let $I\subset k[x_0,x_1,x_2]$ be a smooth minimal  monomial Togliatti
system of forms of degree $d\ge 4$.
 Assume that $\mu(I)\le 6$. Let $E_I$ be the syzygy bundle  associated
to $I$. We have:
 \begin{itemize}
 \item[(a)]  $E_I$ is $\mu$-stable if and only if, up to a permutation
of the coordinates,   one of the following cases holds:
 \begin{itemize}
 \item[(i)]  $\mu(I)=5$, $d=5$ and
$I_1=(x_0^5,x_1^5,x_2^5,x_0^3x_1x_2,x_0x_1^2x_2^2)$.
\item[(ii)] $\mu (I)=6$, $d=7$ and
$I_2=(x_0^7,x_1^7,x_2^7,x_0^3x_1^3x_2,x_0^3x_1x_2^3,x_0x_1^3x_2^3)$ or
$I_3=(x_0^7,x_1^7,x_2^7,x_0^5x_1x_2,$ $x_0x_1^5x_2,x_0x_1x_2^5)$ or
$I_4=(x_0^7,x_1^7,x_2^7,x_0x_1x_2^5,x_0^3x_1^3x_2,x_0^2x_1^2x_2^3)$.
 \end{itemize}

\item[ (b)] $E_I$ is properly $\mu$-semistable if and only if, up to a
permutation of the coordinates,  one of the following cases holds:
 \begin{itemize}

 \item[(i)] $\mu(I)=6$, $d=5$ and
$I_5=(x_0^5,x_1^5,x_2^5,x_0^3x_1x_2,x_0x_1^3x_2,x_0x_1x_2^3)$.
\item[(ii)]
 $\mu(I)=6$, $d=5$ and
$I_6=(x_0^5,x_1^5,x_2^5,x_0^3x_1x_2,x_0^2x_1^2x_2,x_0x_1^3x_2)$ or
$I_7=(x_0^5,x_1^5,x_2^5,x_0^2x_1^2x_2,$ $x_0^2x_1x_2^2,x_0x_1^2x_2^2)$.
 \end{itemize}
\item[ (c)] In all other cases, $E_I$ is unstable.
\end{itemize}
\end{thm}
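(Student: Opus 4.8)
The plan is to combine the classification of smooth minimal monomial Togliatti systems of forms of degree $d\ge 4$ in three variables obtained in Theorems \ref{mainthm1} and \ref{mainthm2} with the combinatorial stability criterion of Proposition \ref{combinatoria}, in the shape recorded in Remark \ref{rem}. First I would note that, by Theorem \ref{mainthm1}, every $I\in\mathcal{T}^s(2,d)$ has $\mu(I)\ge 5$, so the hypothesis $\mu(I)\le 6$ forces $\mu(I)\in\{5,6\}$, and I would list the systems case by case. By Theorem \ref{mainthm1} and Remark \ref{triv}, the systems with $\mu(I)=5$ are, up to a permutation of the variables, either the trivial system $T_d:=(x_0^d,x_1^d,x_2^d,x_0^{d-1}x_1,x_0^{d-1}x_2)$ (which is smooth for $d\ge 4$ by Proposition \ref{triv smooth}) or the exceptional system $I_1=(x_0^5,x_1^5,x_2^5,x_0^3x_1x_2,x_0x_1^2x_2^2)$ with $d=5$; the other exceptional system of Theorem \ref{mainthm1} is non-smooth. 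By Theorem \ref{mainthm2} and Proposition \ref{triv smooth}, the systems with $\mu(I)=6$ are, up to a permutation, either a trivial system $T_m:=(x_0,x_1,x_2)m+(x_0^d,x_1^d,x_2^d)$ with $m=x_0^{a}x_1^{b}x_2^{c}$, $a\ge b\ge c\ge 1$, $a+b+c=d-1$, or one of the six sporadic systems $I_5,I_6,I_7$ (for $d=5$) and $I_2,I_3,I_4$ (for $d=7$) of the statement.

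Next I would apply Remark \ref{rem}: $E_I$ is $\mu$-semistable (resp.\ $\mu$-stable) if and only if $(d-d_J)r+d_J-sd\ge 0$ (resp.\ $>0$) for every proper subset $J$ of the $r=\mu(I)$ generators, with $s=|J|\ge 2$ and $d_J$ the degree of the greatest common divisor of the monomials in $J$. The key simplification is that if $J$ contains two of the pure powers $x_0^d,x_1^d,x_2^d$, then $d_J=0$ and the inequality (\ref{ineq3}) becomes $(r-s)d>0$, which holds strictly since $s<r$; hence only the subsets $J$ containing at most one pure power have to be tested, and for those $d_J$ is bounded by the degree of the greatest common divisor of the (few) non-pure generators. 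This reduces each case to a short list of subsets.

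For the two trivial families a single subset does the job. For $T_d$ take $J=\{x_0^d,x_0^{d-1}x_1,x_0^{d-1}x_2\}$: then $r=5$, $s=3$, $d_J=d-1$, so $(d-d_J)r+d_J-sd=5+(d-1)-3d=4-2d<0$ for $d\ge 4$. For $T_m$ take $J=\{x_0m,x_1m,x_2m\}$: then $r=6$, $s=3$, $d_J=d-1$, so $(d-d_J)r+d_J-sd=6+(d-1)-3d=5-2d<0$ for $d\ge 4$. Thus $E_{T_d}$ and $E_{T_m}$ are unstable, which is part (c). For the seven sporadic systems I would run through the subsets with at most one pure power. For $I_1$ ($r=5$, $d=5$) the quantity $25-4d_J-5s$ has minimum $3>0$, attained by the pairs $\{x_0^5,x_0^3x_1x_2\}$ and $\{x_0^3x_1x_2,x_0x_1^2x_2^2\}$ (both with $d_J=3$); for $I_2,I_3,I_4$ ($r=6$, $d=7$) the quantity $42-5d_J-7s$ likewise has minimum $3>0$. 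Hence these four bundles are $\mu$-stable, which is part (a). For $I_5,I_6,I_7$ ($r=6$, $d=5$) the quantity $30-5d_J-5s=5(6-d_J-s)$ is $\ge 0$ for every proper subset, but vanishes for the subset of the three non-pure generators (which in each case has $d_J=3$, $s=3$); hence these bundles are properly $\mu$-semistable, which is part (b).

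The only real obstacle is bookkeeping: one has to be sure that Theorems \ref{mainthm1}--\ref{mainthm2}, refined by Proposition \ref{triv smooth} to isolate the smooth trivial systems, really exhaust $\{\,I\in\mathcal{T}^s(2,d)\ :\ \mu(I)\le 6,\ d\ge 4\,\}$, and then, in each of the seven sporadic cases, that no subset has been overlooked when searching for the one realizing the minimum of $(d-d_J)r+d_J-sd$. Both are finite verifications, and the reduction to subsets containing at most one pure power keeps the second one short.
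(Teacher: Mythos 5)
Your proposal is correct and follows essentially the same route as the paper: reduce via Theorems \ref{mainthm1} and \ref{mainthm2} (together with Remark \ref{triv} and Proposition \ref{triv smooth}) to the trivial families plus the seven sporadic systems, then test Brenner's criterion in the form of Remark \ref{rem} on each, using the subset $(x_0,x_1,x_2)m$ to destabilize the trivial systems and exhibiting the equality subsets for $I_5,I_6,I_7$. Your observation that any $J$ containing two pure powers has $d_J=0$ and is automatically harmless is a clean way to organize the finite check, and your numerical witnesses match the paper's.
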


\begin{proof}  First of all, by Theorem \ref{mainthm1}, we have
$\mu(I)=$5 or 6. Using the classification of Togliatti systems $I \in
\mathcal{ T}(2,d)$ with $5\le \mu(I)\le 6$ given in Theorems
\ref{mainthm1} and \ref{mainthm2}, it is enough to check:

(1) $I_i$, $1\le i\le 4$ corresponds to $\mu$-stable bundles.

(2) $I_i$, $5\le i\le 7$ corresponds to properly $\mu$-semistable bundles.

(3) Trivial Togliatti systems $I \in \mathcal{ T}(2,d)$ correspond to
$\mu$-unstable bundles.

To prove (1) it is enough to observe that inequality (\ref{ineq3}) is
strictly fulfilled for any proper subset $J_i\varsubsetneq I_i$, $1\le
i\le 4$,  with $|J_i|\ge 2$.

To prove (2) we check that inequality (\ref{ineq3}) is satisfied for any
proper subset $J_i\varsubsetneq I_i$, $5\le i\le 7$,  with $|J_i|\ge 2$
and there is
a subset $J_i^0\varsubsetneq I_i$, $5\le i\le 7$,  with $|J_i^0|\ge 2$
and verifying $(d-d_{J_i^0})\mu(I_i)+d_{J_i^0} -d\mu(J_i^0)=0$. For
instance, for $
I_6=(x_0^5,x_1^5,x_2^5,x_0^3x_1x_2,x_0^2x_1^2x_2,x_0x_1^3x_2)$ it is
enough to take $ J_6^0=(x_0^3x_1x_2,x_0^2x_1^2x_2)\subset I_6$ since
$(d-d_{J_6^0})\mu(I_6)+d_{J_6^0} -d\mu(J_6^0)=(5-4)6+4-2\times 5=0$.

(3) Finally let us check that the syzygy bundle $E_I$ associated to
trivial Togliatti systems $I=(x_0,x_1,x_2)m+(m_1,\cdots ,m_{r-3})\in
\mathcal{ T}(2,d)$ are always $\mu$-unstable. Note that $m$ is a
monomial of degree $d-1$ and $m_i$, $1\le i \le r-3$, are monomials of
degree $d$.  For the subset $J=(x_0m,x_1m,x_2m)\subset I$ inequality
(\ref{ineq3}) becomes $(d-(d-1))r+(d-1)-3d>0$ and $E_I$ is
$\mu$-unstable. Indeed, the slope of $E_I$ is $\mu(E_I)=\frac{dr}{r-1}$
and the syzygy sheaf $F$ associated to $J$ is a subsheaf of $E_I$ with
slope $\mu (F)=\frac{3(d-1)}{2}$. Therefore, $\mu(F)\nleq \mu(E_I)$  and
we conclude that $E_I$ is $\mu$-unstable.

\end{proof}

\end{document}